\documentclass[11pt,reqno]{amsart}
\usepackage{amssymb,amsfonts,amsthm,amscd,stmaryrd,dsfont,esint,upgreek,constants,todonotes}
\usepackage{pdfsync}
\usepackage{amsmath, constants}\multlinegap=0pt
\usepackage[active]{srcltx}
\usepackage[colorlinks=true,urlcolor=blue,
citecolor=red,linkcolor=blue,linktocpage,pdfpagelabels,bookmarksnumbered,bookmarksopen]{hyperref}
\usepackage[latin1]{inputenc}
\usepackage{amsthm}
\usepackage{amssymb}
\usepackage{bm}
\usepackage[francais,english]{babel}
\usepackage[francais,english]{babel}

\newcommand{\s}[1]{{\mathcal #1}}

\newcommand{\bb}[1]{{\mathbb #1}}

\newtheorem{theorem}{Theorem}[section]

\newtheorem{lemma}[theorem]{Lemma}
\newtheorem{proposition}[theorem]{Proposition}
\newtheorem{problem}[theorem]{Problem}
\newtheorem{definition}[theorem]{Definition}

\newtheorem{remark}[theorem]{Remark}

\numberwithin{equation}{section}
\numberwithin{theorem}{section}

\definecolor{Red}{cmyk}{0,1,1,0.2}

\def\ds{\displaystyle}
\def\ep{\varepsilon}


\newcommand{\dd}{\,{\rm d}}

\newcommand{\id}{\rm{id}}

\newcommand{\diver}{\rm{div}}
\def\Om{\Omega}


\newcommand{\cA}{{\mathcal A}}
\newcommand{\cB}{{\mathcal B}}

\newcommand{\cL}{{\mathcal L}}
\newcommand{\cK}{{\mathcal K}}

\newcommand{\cP}{{\mathcal P}}


\newcommand{\fM}{{\mathfrak M}}

\newcommand{\Z}{{\mathbb Z}}

\newcommand{\weaks}{\stackrel{*}{\rightharpoonup}}
\newcommand{\ov}{\overline}

\newcommand{\R}{\mathbb{R}}
\newcommand{\T}{\mathbb{T}}
\renewcommand{\O}{\Omega}
\renewcommand{\rm}{\mathrm}


\newcommand{\et}{{\bm{\eta}}}


\newcommand{\mres}{\mathbin{\vrule height 1.6ex depth 0pt width
0.13ex\vrule height 0.13ex depth 0pt width 1.3ex}}


\def\a{\alpha}
\def\b{\beta}
\def\d{\delta}
\def\g{\gamma}
\def\l{\lambda}

\def\sig{\sigma}

\def\e{\varepsilon}

\def\rg{\rangle} 
\def\lg{\langle} 
\def\ds{\displaystyle}
\def\T{\bb{T}}
\newcommand{\be}{\begin{equation}}
\newcommand{\ee}{\end{equation}}

\setlength{\textwidth}{17cm}
\setlength{\textheight}{23cm}
\setlength{\topmargin}{-1cm}
\setlength{\oddsidemargin}{-1mm}
\setlength{\evensidemargin}{-1mm}
\raggedbottom

\title{First order Mean Field Games with density constraints: Pressure equals Price}

\author[P. Cardaliaguet]{Pierre Cardaliaguet}
\address{Ceremade, Universit\'e Paris-Dauphine,
Place du Mar\'echal de Lattre de Tassigny, 75775 Paris cedex 16 - France}
\email[P. Cardaliaguet]{cardaliaguet@ceremade.dauphine.fr }

\author[A.R. M\'esz\'aros]{Alp\'ar R. M\'esz\'aros}
\address{Department of Mathematics, University of California, Los Angeles - USA}
\email[A.R. M\'esz\'aros]{alpar@math.ucla.edu}

\author[F. Santambrogio]{Filippo Santambrogio}
\address{Laboratoire de Math\'ematiques d'Orsay, Univ. Paris-Sud, CNRS, Universit\'e Paris-Saclay, 91405 ORsay cedex - France}
\email[F. Santambrogio]{filippo.santambrogio@math.u-psud.fr}

\date{\today}
\begin{document}

\maketitle
\begin{abstract}
In this paper we study Mean Field Game systems under density constraints as optimality conditions of two optimization problems in duality. A weak solution of the system contains an extra term, an additional price imposed on the saturated zones. We show that this price corresponds to the pressure field from the models of incompressible Euler's equations {\it \`a la Brenier}. By this observation we manage to obtain a minimal regularity, which allows to write optimality conditions at the level of single agent trajectories and to define a weak notion of Nash equilibrium for our model. 
\end{abstract}

\section{Introduction}

\subsection{The MFG system} 
Introduced by Lasry-Lions \cite{LL06cr1, LL06cr2, LL07mf} (see also Huang-Malhamé-Caines \cite{HCMieeeAC06}) the mean field game system (in short, MFG system) describes a differential game with infinitely many identical players who interact through their repartition density. The first order MFG system with a local coupling takes the form
\be\label{MFG0}
\left\{
\begin{array}{crcll}
{\rm{(i)}} & -\partial_t u + H(x,D u) &=& f(x,m) &  {\rm{in}}\; (0,T)\times \T^d \\
{\rm{(ii)}}& \partial_t m - \mathrm{div} \left(mD_p H(x,D u)\right) &=& 0  & {\rm{in}}\; (0,T)\times \T^d\\
{\rm{(iii)}} & u(T,x) =  g(x),&\;&  m(0,x) = m_0(x)  & {\rm{in}}\; \T^d.\\
\end{array}\right.
\ee
Here, to avoid the discussion of the boundary data, we work for simplicity with periodic boundary conditions, i.e., in the torus $\T^d:=\R^d/\Z^d$. Since the main accent in this paper will be on the modeling of the density constraint, we keep this simpler setting. Let us remark that without much effort, with the same ideas it is possible to treat the case of general domains with the corresponding boundary conditions. The Hamiltonian $H:\T^d\times \R^d\to \R$ is typically convex with respect to the last variable and the coupling cost $f:\T^d\times [0,+\infty)$ is nondecreasing with respect to the last variable. The monotonicity of the coupling formalizes the idea that the players dislike congested areas. It will be highly exploited later in the variational setting, which will imply in particular a convexity property for the energy functional. Moreover all these assumptions are typical in the general
MFG theory.
 
Let us briefly describe the interpretation of \eqref{MFG0}. In the above backward-forward system, $u=u(t,x)$ is the value function associated to any tiny player while $m=m(t,x)$ is the density of the players at time $t$ and at position $x$. The value function $u(t,x)$ is formally given by
$$
u(t,x)= \inf_\gamma
\int_t^T L(\gamma(s), \dot\gamma(s))+ f(\gamma(s), m(s,\gamma(s))\dd s +g(\gamma(T))
$$
where the player minimizes over the paths $\gamma:[t,T]\to \T^d$ with $\gamma(t)=x$, $f=f(x,m(t,x))$ is the running cost, $L$ is obtained from the Fenchel conjugate of $H$ with respect to the last variable, and $g:\T^d\to \R$ is the terminal cost at the terminal time $t=T$. The running cost $f$ couples the two equations and acts as a penalization for those regions where the density $m$ is too high. 

At the initial time $t=0$, the initial distribution is $m_0$ (a probability measure on $\T^d$). Then the density evolves according to the motion of the players. Since -- by standard argument in optimal control -- it is optimal for the players to play $\dot \gamma(s)= -D_pH(\gamma(s), Du(s,\gamma(s))$, the evolution of the density is given by the continuity equation \eqref{MFG0}-(ii). 

Note that each tiny player acts as if he/she knew the evolution of the players' density $m=m(t,x)$ (he/she somehow ``forecasts" it, as usually in a ``rational expectations'' framework). Actually he/she needs this forecast in order to solve his/her individual control problems. Solving this problem he/she obtains the value function $u$ and the optimal velocity field  $-D_pH(\cdot, Du).$ Then the ``true'' evolution of the players' density is given as the transport of the initial density by this field (this corresponds to the continuity equation in system \eqref{MFG0}). 

The mean field game system corresponds to an equilibrium situation where the ``forecast" of the players is correct: the solution of the continuity equation is indeed $m=m(t,x)$, which was the forecast made by the players. In terms of game theory, this corresponds to a Nash equilibrium. 

Existence and uniqueness of solutions for the above problem are discussed by P.-L. Lions in \cite{LLperso} (through a reduction to an elliptic equation in time-space when the coefficients are smooth, under the additional assumption $\lim_{m\to 0} f(x,m)=-\infty$, which is satisfied for instance for log-like couplings, which guarantees $m>0$ and hence ellipticity) and in Cardaliaguet \cite{carda}, Graber \cite{graber}, Cardaliaguet-Graber \cite{cargra}, Cardaliaguet-Porretta-Tonon \cite{carporton} (following an approach by variational methods suggested in \cite{LL07mf} and also inspired by Benamou-Brenier \cite{bb}). Recently, in \cite{BenCar} Benamou and Carlier used similar variational techniques to study an augmented Lagrangian scheme for MFG problems and obtain efficient numerical simulations.

\subsection{The problem with a density constraint}
In this paper we study the behavior of the MFG system when there is a {\it density constraint}, i.e., when the density $m$ cannot exceed some given value $\ov m>1/|\T^d|=1$. Namely: $0\leq m(t,x)\leq \ov m$ at any point $(t,x)$. In other words, the players pay an infinite price when the density goes above $\ov m$: $f(x,m)=+\infty$ if $m>\ov m$. The question of how to model this situation was first introduced by Santambrogio \cite{San} and then investigated M\'esz\'aros and Silva in \cite{MesSil} in the framework of stationary second order models.  We emphasize the fact that imposing a density constraint will result in a so-called ``hard congestion'' effect in the model. Models of MFGs where so-called ``soft congestion'' (meaning that agents slow down when they reach zones with high density) effects have been studied recently by Gomes and Mitake in \cite{GomMit}, by Gomes and Voskanyan in \cite{GomVos} and by Burger, Di Francesco, Markowich and Wolfram in \cite{BurDiFMarWol}.  

Coming back to our model, there are several issues in the interpretation of system \eqref{MFG0} when there is a density constraints. Indeed, the above interpretation does not make sense anymore for the following reason: if, on the one hand, the constraint $m\leq \ov m$ is fulfilled, then the  minimization problem of the agents (due to the fact that they are considered negligible against the others) does not see this constraint and the pair $(u,m)$ is the solution of a standard MFG system; but this solution has no reason to satisfy the constraint, and there is a contradiction. On the other hand, if there are places where $m(t,x)>\ov m$, then the players do not go through these places because their cost is infinite there: but then the density at such places is zero, and there is  again a contradiction. So, in order to understand the MFG system with a density constraint, one has to change the point of  view. We shall see that there are several ways to understand more deeply the phenomena behind this question. We warn the reader that the model that we will obtain significantly differs from that in \cite{San}.

Perhaps the simplest approach  is to go through an approximation argument: let us consider the solution $(u^\ep,m^\ep)$ corresponding to a running cost $f^\ep$ which is  finite everywhere, but tends to infinity as $\ep$ tends to $0$ when $m>\ov m$. In orther words, $f^\ep(x,m)\to f(x,m)$ if $m\leq \ov m$ and $f(x,m)\to +\infty$ if $m>\ov m$, as $\e\to 0$. In this case the MFG system with a density constraint should simply be the limit configuration (a limit which should be proven to be well-defined). 

We indeed show that the pair $(u^\ep, m^\ep)$ has (up to subsequences) a limit $(u,m)$ which satisfies (in a weak sense) the following system: 
\be\label{MFGdensity0}
\left\{
\begin{array}{crcll}
{\rm{(i)}} & -\partial_t u(t,x) + H(x,D u(t,x)) &=& f(x,m(t,x))+\beta(t,x) &  {\rm{in}}\; (0,T)\times \T^d \\
{\rm{(ii)}}& \partial_t m(t,x) - \mathrm{div} \left(mD_p H(x,D u(t,x))\right) &=& 0  & {\rm{in}}\; (0,T)\times \T^d\\
{\rm{(iii)}} & u(T,x) =  g(x)+\beta_T(x),&\;&  m(0,x) = m_0(x)  & {\rm{in}}\; \T^d\\
{\rm{(iv)}} & 0\leq m(t,x) \leq \overline m & & & {\rm{in}}\; [0,T]\times \T^d
\end{array}\right.
\ee
Besides the expected density constraint (iv), two extra terms appear: $\beta$ in (i) and $\beta_T$ in (iii). These two quantities turn out to be nonnegative and concentrated on the set $\{m=\ov m\}$. They formally correspond to an extra price payed by the players to go through zones where the concentration is saturated, i.e., where $m=\ov m$ (in traffic language, this would be a {\it toll}).  In other words, the new optimal control problem for the players is now (formally)
\begin{equation}\label{new_control2}
\begin{array}{c}
\ds u(t,x)= \inf_{\begin{subarray}{c}
\gamma\\
\g(t)=x
\end{subarray}}
\int_t^T L(\gamma(s), \dot\gamma(s))+ f(\gamma(s), m(s,\gamma(s))+\beta(s, \gamma(s))\dd s +g(\gamma(T))+\beta_T(\gamma(T)),
\end{array}
\end{equation}
and thus (still formally) satisfies the dynamic programming principle: for any $0\le t_1\le t_2< T,$
\begin{equation}\label{new_control}
\begin{array}{c}
\ds u(t_1,x)= \inf_{\begin{subarray}{c}
\gamma\\
\g(t_1)=x
\end{subarray}}
\int_{t_1}^{t_2} L(\gamma(s), \dot\gamma(s))+ f(\gamma(s), m(s,\gamma(s))+\beta(s, \gamma(s))\dd s +u(t_2,\gamma(t_2)).
\end{array}
\end{equation}
The ``extra prices'' $\beta$ and $\beta_T$ discourage too many players to be attracted by the area where the constraint is saturated, thus ensuring the density constraints (iv) to be fulfilled. The reader familiar with theoretical economics can realize immediately that this is exactly the typical role of prices: a price is a quantity determined by a global configuration, which replaces, in the individual choices of the agents, the presence of the constraint.

\subsection{The variational method} 
Another way to see the problem is the following: it is known (see \cite{LL07mf}) that the solution $(u,m)$ to \eqref{MFG0} can be obtained by variational methods at least when $f$ is finite everywhere. More precisely, the value function $u$ is (formally) given as a minimizer of the functional 
$$
{\mathcal A}(u):= \int_0^T \int_{\bb{T}^d} F^*(x,-\partial_t  u + H(x,D u))\dd x\dd t - \int_{\bb{T}^d}  u(0,x)\dd m_0(x),
 $$
subject to the constraint that $u(T,x)=g(x)$, where $F=F(x,m)$ is an antiderivative of $f=f(x,m)$ with respect to $m$ and $F^*$ is its Legendre-Fenchel conjugate w.r.t. the second variable. In the same way $m$ is (formally) given as a minimizer of the problem 
 $$
{\mathcal B}(m,w):=  \int_{\bb{T}^d}  g(x)m(T,x)\dd x + \int_0^T \int_{\bb{T}^d} m(t,x)H^*\left(x,-\frac{w}{m}\right) + F(x,m(t,x))\, \dd x \dd t
 $$
subject to the constraint 
$$
\partial_t m + \mathrm{div}  (w)= 0\;\;\; {\rm{in}}\; (0,T)\times \T^d, \qquad m(0)=m_0,
$$
 where $H^*$ is the convex conjugate of $H$ with respect to the last variable. With the language of the theory of optimal control of PDEs, the additional variable $w:[0,T]\times\T^d\to\R^d$ plays the role of the control, while $m$ can be seen as the state variable. 
 
It turns out that both problems make perfectly sense, even when $f(x,m)=+\infty$ if $m>\ov m$. In fact, if $f^\ep$ is a finite approximation of $f$ as before, one can expect the minimizers of ${\mathcal A}^\ep$ and ${\mathcal B}^\ep$ (corresponding to $f^\ep$) to converge to the minimizers of ${\mathcal A}$ and ${\mathcal B}$ as $\ep\to0$ (as a simple consequence of $\Gamma-$convergence). This is precisely what happens. Note that, as $f(x,m)=+\infty$ for $m>\ov m$, $F(x,m)$ has the same property, so that $F^*(x,m)$ is linear on $[\ov m,+\infty)$. This linear behavior explains the appearance of the terms $\beta$ and $\beta_T$ described above. 
 
\subsection{Connections between MFGs with density constraints and the incompressible Euler's equations \`a la Brenier} 
It is not surprising, due to the constraint $m\leq \ov m$, that some strong connections between our model and the variational models for the incompressible Euler's equations studied by Brenier (see \cite{br}) and also by Ambrosio and Figalli (see \cite{af}) arise. What was unexpected at the beginning of our study is the role that this connection would play in regularity. In order to understand the analogy, notice that the incompressibility constraint in the model of Brenier to study perfect fluids is what introduces the pressure field. Morally the same effect happens imposing density  constraint for MFG. Using the common variational structure, similar also to the one introduced by Benamou and Brenier in \cite{bb}, shared by the incompressible Euler equation and by our model, we can easily interpret the terms $\b$ and $\b_T$, that we call ``additional prices/costs'' for the agents (appearing only if they pass through saturated zones) in \eqref{MFGdensity0} as a sort of pressure field from fluid mechanics. This observation motivates the title of our work as well.

Using similar techniques as in \cite{br} and \cite{af, af2} we show that $\b$ is an $$L^2_{\rm{loc}}((0,T);BV(\T^d))\hookrightarrow L^{d/(d-1)}_{\rm{loc}}((0,T)\times\T^d)$$ function (while a priori it was only supposed to be a measure) and $\b_T$ is $L^1(\T^d)$. With the help of an example we show that this local integrability cannot be extended so as to include the final time $t=T$, which shows that the result is somehow sharp. This regularity property will allow us to give a clearer (weak) meaning to the control problem \eqref{new_control}, obtaining optimality conditions along single agent trajectories. Our techniques to proceed with the analysis rely on the properties of measures defined on paths, that we shall call {\it density-constrained flows} in our context, and we are exploiting some properties of a Hardy-Littlewood type maximal functional as well (this is very much inspired by \cite{af}).

After this analysis we deduce the existence of a {\it local weak  Nash equilibrium} for our model. 

\vspace{0.5cm}

The paper is organized in the following way. We first introduce our main notation and assumptions (Section \ref{sec:Hyp}). Then we discuss the two optimization problems for ${\mathcal A}$ and ${\mathcal B}$ described above (Section \ref{sec:optipb}). We introduce the definition of the MFG system with a density constraint, present our main existence result as well as the approximation by standard MFG systems in Section \ref{sec:MFGdensity}. In Section \ref{sec:example}, by means of an example, we study some finer properties of a solution $(m,u,\b,\b_T)$ of the MFG system with density constraints. Section \ref{sec:regularity} is devoted to the proof of the $L_{\rm{loc}}^{d/(d-1)}$ integrability of the additional price $\b$ under some additional assumptions on the Hamiltonian and the coupling. Finally, having in hand this integrability property,  we introduce   in Section \ref{sec:nash}  the optimal density-constrained flows and derive optimality conditions along single agent paths, which allow in particular to study the existence of the local weak Nash equilibrium.\\

{\bf Acknowledgement:} The first author was partially supported by the ANR (Agence Nationale de la Recherche) projects  ANR-10-BLAN 0112, ANR-12-BS01-0008-01 and ANR-14-ACHN-0030-01. The second and third authors were partly supported by the ANR project ANR-12-MONU-0013 and by the iCODE project ``strategic crowds'' of the IDEX {\it Universit\'e Paris-Saclay}.

\section{Notations, assumptions and preliminaries}\label{sec:Hyp}

We consider the MFG  system with a density constraint \eqref{MFGdensity0}
under the assumption that all the maps are periodic in space. Typical conditions are 
\begin{enumerate}
\item[{\bf(H1)}] The density constraint $\ov m$ is larger than $1=1/|\T^d |$. 
\item[{\bf(H2)}] (Conditions on the initial and final conditions) $m_0$ is a probability measure on $\bb{T}^d$ which is absolutely continuous with respect to Lebesgue measure and there exists $\ov c>0$ such that $0\leq m_0< \ov m-\ov c$ a.e. on $\bb{T}^d.$ 
We  assume that $ g:\T^d\to \R$ is a $C^1$ function on $\bb{T}^d$.
\item[{\bf(H3)}] (Conditions on the Hamiltonian) $H:\bb{T}^d \times \bb{R}^d \to \bb{R}$ is continuous in both variables, convex and differentiable in the second variable, with $D_pH$ continuous in both variables. Moreover, $H$ has superlinear growth in the gradient variable: there exist $r >1 $ and $C >0$ such that
\begin{equation}
\label{eq:hamiltonian_bounds}
\frac{1}{rC}|p|^r-C \leq H(x,p) \leq \frac{C}{r}|p|^r + C.
\end{equation}
We denote by $H^*(x,\cdot)$ the Fenchel conjugate of $H(x,\cdot)$, which, due to the above assumptions, satisfies
\begin{equation}
\label{eq:hamiltonian_conjugate_bounds}
\frac{1}{r'C}|q|^{r'}-C \leq H^*(x,q) \leq \frac{C}{r'}|q|^{r'} + C,
\end{equation}
where $r'$ is the conjugate of $r$. 
We will also denote by $L$ the Lagrangian given by $L(x,q) = H^*(x,-q)$, which thus satisfies the same bounds as $H^*$.
\item[{\bf(H4)}] (Conditions on the coupling) Let $f$ be continuous on $\bb{T}^d \times [0,\ov m]$, non-decreasing in the second variable with 
 $f(x,0)=0$. 
\end{enumerate}

Let us comment now on the previous assumptions. {\bf (H1)} and {\bf (H2)} are assumptions on the initial and final conditions, since $m_0$ is a probability measure on $\T^d,$ because of the mass-conservation $m_t$ will be also a probability measure on $\T^d$ for all $t\in[0,T].$ The density constraint should satisfy $\ov{m}>1,$ otherwise imposing that a probability measure $m\le\ov{m}$ on $\T^d$ would give either a trivial competitor or no competitor at all. It is natural to impose $0\le m_0\le\ov{m},$ i.e. we start with an initial distribution that already satisfies the constraint. The  modified upper bound $m<\ov{m}-\bar{c}$ for some $\bar{c}>0$ small real number is just a technical assumption that we need in the analysis. Assumptions {\bf (H3)} and {\bf (H4)} are natural growth and structural conditions that are typical while working with variational MFG systems. In particular imposing that $f$ is non-decreasing will imply that the energy functional $\cB$ is convex.

We define $F$ so that $F(x,\cdot)$ is an antiderivative of $f(x,\cdot)$ on $[0,\ov m]$, that is,
\begin{equation}
F(x,m) = \int_0^m f(x,s)\dd s, ~~ \forall ~ m \in [ 0, \ov m],
\end{equation}
and extend $F$ to $+\infty$ on $(-\infty,0)\times (\ov m,+\infty)$. 
It follows that $F$ is continuous on $\bb{T}^d \times [0,\ov m]$, is convex and differentiable in the second variable.
We also define $F^*(x,\cdot)$ to be the Fenchel conjugate of $F(x,\cdot)$ for each $x$. Note that
\be\label{remF*}
F^*(x,\alpha)\geq \alpha \ov m-F(x,\ov m)
\ee
and $F^*(\cdot,\a)=0$ for all $\a\le 0.$
Following the approach of Cardaliaguet-Carlier-Nazaret \cite{carcarnaz} (see also Cardaliaguet \cite{carda}, Graber \cite{graber} or Cardaliaguet-Graber \cite{cargra}) it seems that the solution to  \eqref{MFGdensity0} can be obtained as the system of optimality conditions for optimal control problems. 

\subsection{Optimal transport toolbox} In this subsection we collect some basic notions and results from the theory of optimal transportation which we will need in the sequel. We refer to \cite{OTAM, villani} general references to this theory. Let $\Om\subset\R^d$ be a compact subset (or any compact subset of a Polish space). Even if in the whole paper we will restrict ourselves to the case of $\Om=\T^d$, we state the following results in the general case. Given two probability measures $\mu,\nu\in \cP(\Om)$ and for $p\ge1$ we define the usual Wasserstein metric by means of the Monge-Kantorovich optimal transportation problem
$$W_p(\mu,\nu):=\inf\left\{ \int_{\Om\times\Om}|x-y|^p\,\dd\g(x,y)\;:\;\g\in\Pi(\mu,\nu)\right\}^{\frac1p},$$ 
where $\Pi(\mu,\nu):=\{\g\in\cP(\Om\times\Om):\;\; (\pi^x)_\#\g=\mu,\; (\pi^y)_\#\g=\nu\}$ and $\pi^x$ and $\pi^y$ denote the canonical projections from $\Om\times\Om$ onto $\Om$ (in a more general setting, $\Om$ being any compact subset of a Polish space, in the definition of $W_p$ one has to replace the Euclidean distance $|x-y|$ by the distance induced by the underlining metric $d$).  This quantity happens to be a distance on $\cP(\Om)$ which metrizes the weak-$\star$ convergence of probability measures; we denote by $\mathbb W_p(\Om)$ the space of probabilities on $\Om$ endowed with this distance. 

Historically, the quadratic case $p=2$ had been understood first. So, let us state the most fundamental results in this case. Under the additional assumption $\mu\ll\cL^d\mres\Om$ ($\mu$ is absolutely continuous w.r.t. the $d-$dimensional Lebesgue measure restricted to $\Om$) Brenier showed (see \cite{brenier1, brenier2}) that the optimal $\g$ in the above problem is actually induced by a map, which turns out to be  the gradient of a convex function, i.e. there exists $T:\O\to\O$ and $\psi:\Om\to\R$ convex such that $T=\nabla \psi$ and $\ov\g:=(\id,T)_\#\mu.$ The function $\psi$ is obtained as $\ds\psi(x)=\mbox{{\small $\frac 12$}} |x|^2-\varphi(x)$, where $\varphi$ is the so-called Kantorovich potential for the transport from $\mu$ to $\nu$, and is characterized as the solution of a dual problem. In this way, the optimal transport map $T$ can also be written as $T(x)=x-\nabla\varphi(x)$. Later, McCann  (see \cite{mccann}) introduced a useful notion of interpolation between probability measures: the curve $\mu_t:=\left( (1-t)x+ty\right)_\#\ov\g,$ for $t\in[0,1]$, gives a constant speed geodesic in the Wasserstein space connecting $\mu_0:=\mu$ and $\mu_1:=\nu.$

Based on this notion of interpolation, Benamou and Brenier using also some ideas from fluid mechanics, gave a dynamical formulation to the Monge-Kantorovich problem (see \cite{bb}). They showed that
$$\frac{1}{p}W_p^p(\mu,\nu)=\inf\left\{\cB_p(E_t,\mu_t)\; : \; \partial_t\mu_t+\rm{div}(E_t)=0,\; \mu_0=\mu,\; \mu_1=\nu \right\},$$ where $\cB_p:\fM([0,1]\times\Om)^d\times L^\infty([0,1];\mathbb W_p(\Om))\to \R\cup\{+\infty\}$\footnote{We denote by $\fM(X)$ the signed Radon measures on $X.$ Observe that $\mu\in L^\infty([0,1];\mathbb W_p(\Om))$ only means that $\mu=(\mu_t)_t$ is a time-dependent family of probability measures.} is given by
$$
\cB_p(E,\mu):=\left\{
\begin{array}{ll}
\ds\int_0^1\int_\Om \frac{1}{p}\left|\frac{\dd E}{\dd\mu}\right|^p\dd \mu_t(x)\dd t, &{\rm{if}}\ E\ll\mu,\\
+\infty, & {\rm{otherwise}}.
\end{array}
\right. 
$$
It is well-known that $\cB_p$ is jointly convex and l.s.c. w.r.t the weak-$\star$ convergence of measures (see Section 5.3.1 in \cite{OTAM}) and that, if $\partial_t \mu_t+\rm{div} (E_t)=0$, then $\cB_p(E,\mu)<+\infty$ implies that $t\mapsto \mu_t$ is a continuous curve, belonging to $W^{1,p}([0,1];\mathbb W_p(\Omega))$. In particular $\mu_t$ is well defined for all $t\in[0,1].$

We shall use the notion of {\it narrow convergence} in $\cP(\Om),$ which is the weak-$\star$ convergence in duality with continuous and bounded functions on $\Om.$ Since in our case $\Omega$ is compact, then $\cP(\Om)$ is also compact for this convergence. 

\subsection{Measures on curves and a superposition principle}
Let us denote by $\Gamma$ the set of absolutely continuous curves $\g:[0,T]\to\T^d.$ We denote by $\cP(\Gamma)$ the set of Borel probability measures defined on $\Gamma$. Let  us set $\cP_r(\Gamma)$ $(r\ge 1)$ to be the subset of $\cP(\Gamma)$ such that 
$$\int_\Gamma\int_0^T|\dot\g(s)|^r\dd s\dd\et(\g)<+\infty.$$
Note that the space $\Gamma$, which is naturally endowed with the uniform convergence topology, is not compact, and hence $\cP(\Gamma)$ is not compact for the narrow convergence. Yet, Prokhorov Theorem guarantees that any family of probability measures on a Polish space $\Om$ is relatively sequentially compact w.r.t. the narrow convergence provided it is {\it tight}. Tight means that for any $\e>0$ there exists a compact set $K\subseteq\Om$ such that for any $\mu$ from this family $\mu(\Om\setminus K)<\e.$ In the case of $\cP(\Gamma)$, it is easy to see that a uniform bound on $\ds\int_\Gamma\int_0^T|\dot\g(s)|^r\dd s\dd\et(\g)$ is enough to provide tightness. This will be useful later in the paper.

We also define the evaluation maps $e_t:\Gamma\to\T^d$, given by $e_t(\g):=\g(t)$ for all $t\in[0,T].$   This allows to state a well-known result, a connection between the solutions of the continuity equation and the measures on paths, called {\it superposition principle}, which can be considered as a weaker version of the DiPerna-Lions-Ambrosio theory (see for instance Theorem 8.2.1. from \cite{ags}). 

\begin{theorem}
Let $\mu:[0,T]\to\cP(\T^d)$ be a narrowly continuous solution of the continuity equation $ \partial_t\mu+\rm{div} (v\mu)=0,\; \mu_0\in\cP_2(\T^d)$ for a velocity field $v:(0,T)\times \T^d\to\R^d$ satisfying $\ds\int_0^T \int_{\T^d}|v_t|^2\dd\mu_t\dd t<+\infty.$ Then there exists $\et\in\cP(\Gamma)$ such that 
\begin{itemize}
\item[$\rm{(i)}$] $\mu_t=(e_t)_\#\et$ for all $t\in[0,T];$ 
\item[$\rm{(ii)}$] we have the energy inequality
$$\int_\Gamma\int_0^T|\dot\g(t)|^2\dd t\dd\et(\g)\le\int_0^T\int_{\T^d}|v|^2\dd\mu_t\dd t;$$
\item[$\rm{(iii)}$] $\dot\g(t)=v_t(\g(t)),\ $ for $\et-$a.e. $\g$ and a.e. $t\in[0,T].$
\end{itemize}
\end{theorem}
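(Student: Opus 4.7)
The plan is to implement Ambrosio's classical regularization-and-compactness scheme (the statement is precisely Theorem 8.2.1 of \cite{ags}). \emph{First step: mollify in space.} On $\T^d$ pick a strictly positive smooth kernel $\rho_\e$ and set $\mu^\e_t := \mu_t*\rho_\e$, $v^\e_t := ((v_t\mu_t)*\rho_\e)/\mu^\e_t$. The couple $(\mu^\e,v^\e)$ still solves the continuity equation, with $v^\e$ smooth and $\mu^\e_t>0$. Jensen's inequality for the convex function $(a,b)\mapsto |a|^2/b$ on $\R^d\times(0,+\infty)$ gives the pointwise bound $|v^\e_t|^2\mu^\e_t\le(|v_t|^2\mu_t)*\rho_\e$, hence $\int_{\T^d}|v^\e_t|^2\dd\mu^\e_t\le\int_{\T^d}|v_t|^2\dd\mu_t$.

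\emph{Second step: flow and compactness.} Since $v^\e$ is smooth, the ODE $\dot X^\e_t = v^\e_t(X^\e_t)$, $X^\e_0=\id$, admits a unique global flow on $\T^d$. Define $\et^\e\in\cP(\Gamma)$ as the law of the curve $t\mapsto X^\e_t(x)$ when $x\sim\mu_0^\e$; by construction $(e_t)_\#\et^\e=\mu^\e_t$ and the change-of-variables formula gives
\begin{equation*}
\int_\Gamma\!\int_0^T|\dot\g(t)|^2\dd t\,\dd\et^\e(\g)=\int_0^T\!\!\int_{\T^d}|v^\e_t|^2\dd\mu^\e_t\,\dd t\le\int_0^T\!\!\int_{\T^d}|v_t|^2\dd\mu_t\,\dd t.
\end{equation*}
By Cauchy--Schwarz, curves with $\int|\dot\g|^2\le M$ are $1/2$-H\"older with modulus $\sqrt{M}$, hence equicontinuous; Arzel\`a--Ascoli combined with Chebyshev's inequality yields tightness of $\{\et^\e\}$ on $\Gamma$ in the uniform topology, so one can extract a narrow limit $\et^{\e_n}\weaks\et$. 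Properties (i) and (ii) then follow by soft arguments: continuity of $e_t$ together with $\mu^\e_t\weaks\mu_t$ give (i), and the energy inequality is the standard narrow lower semicontinuity of $\et\mapsto\int_\Gamma\int_0^T|\dot\g|^2\dd t\,\dd\et$, a consequence of convexity in $\dot\g$.

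\emph{Third step, the hard one: identifying the velocity.} For an arbitrary bounded continuous test field $w:[0,T]\times\T^d\to\R^d$, the functional $\et\mapsto\int_\Gamma\int_0^T|\dot\g(t)-w(t,\g(t))|^2\dd t\,\dd\et$ is narrowly l.s.c. On the smooth side, expanding the square, using the identity $v^\e\mu^\e=(v\mu)*\rho_\e$ to pass to the limit in the cross term, $\mu^\e_t\weaks\mu_t$ in the $|w|^2$ term, and Jensen to bound the first term by $\int|v|^2\dd\mu_t$ from above, one obtains
\begin{equation*}
\int_\Gamma\!\int_0^T|\dot\g-w(\cdot,\g)|^2\dd t\,\dd\et \;\le\; \limsup_\e\int_0^T\!\!\int_{\T^d}|v^\e-w|^2\dd\mu^\e_t\,\dd t \;\le\; \int_0^T\!\!\int_{\T^d}|v-w|^2\dd\mu_t\,\dd t.
\end{equation*}
Finally, approximate $v$ in $L^2(\dd t\otimes\dd\mu_t)$ by bounded continuous fields $w_k$ (possible since the measure is Radon) and apply the triangle inequality in $L^2(\dd t\otimes\dd\et)$; by (i) the norm of any function of the form $\varphi(t,\g(t))$ with respect to $\dd t\otimes\dd\et$ coincides with the one with respect to $\dd t\otimes\dd\mu_t$, so sending $k\to\infty$ forces $\int_\Gamma\int_0^T|\dot\g(t)-v_t(\g(t))|^2\dd t\,\dd\et=0$, which by Fubini is (iii). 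This last identification is the main obstacle of the proof, precisely because $v^\e$ does not converge to $v$ pointwise or in any strong sense; it must be extracted indirectly by pairing the narrow l.s.c.\ of the squared-distance functional on $\cP(\Gamma)$ with the sharp Jensen-type upper bound on the mollified kinetic energy.
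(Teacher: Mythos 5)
Your proposal is correct, but note that the paper itself does not prove this statement at all: it is quoted as a known result, namely Theorem 8.2.1 of \cite{ags}, and your argument is essentially the standard proof from that reference (spatial mollification $\mu^\e_t=\mu_t*\rho_\e$, $v^\e_t=((v_t\mu_t)*\rho_\e)/\mu^\e_t$, the flow of the regularized field, tightness and lower semicontinuity on $\cP(\Gamma)$, and identification of the velocity through the functional $\et\mapsto\int_\Gamma\int_0^T|\dot\g(t)-w(t,\g(t))|^2\dd t\dd\et$ combined with the Jensen-type bound), here simplified by the compactness of $\T^d$. The only points worth polishing are routine: $v^\e$ is smooth in $x$ but only integrable in $t$, so the flow exists in the Carath\'eodory sense using $\int_0^T\rm{Lip}(v^\e_t)\dd t<+\infty$ (which your bounds do give, since $\mu^\e_t$ is bounded below on the torus and $\int_{\T^d}|v_t|\dd\mu_t\in L^1(0,T)$); the sublevel sets $\{\g:\int_0^T|\dot\g|^2\dd t\le M\}$ are compact because the energy is lower semicontinuous under uniform convergence; and in the last step one should fix a Borel representative of $v$ so that $(t,\g)\mapsto v_t(\g(t))$ is measurable before applying Fubini.
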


\section{Optimal control problems}\label{sec:optipb}

We look in this section at two PDE control problems that will lead to our Mean Field Game model.

The first one is an optimal control problem of Hamilton-Jacobi equations: 
denote by $\s{K}_P$ the set of functions $ u \in C^1([0,T] \times \bb{T}^d)$ such that $ u(T,x) =  g(x)$ (the subscript $P$ stands for ``primal''). Let us define on $\s{K}_P$ the functional
\begin{equation}
\s{A}( u) = \int_0^T \int_{\bb{T}^d} F^*(x,-\partial_t  u + H(x,D u))\dd x\dd t - \int_{\bb{T}^d}  u(0,x)\dd m_0(x) .
\end{equation}
Then we have our first optimal control problem. \medskip
\begin{problem}[Optimal control of HJ] \label{pr:smooth}
Find $\ds\inf_{ u \in \s{K}_P} \s{A}( u)$.
\end{problem} 
It is easy to check that one can restrict the optimization to the class of minimizers such that $-\partial_t  u + H(x,D u)\geq 0$, because $F^*(x,\alpha)= 0$ for $\alpha \leq 0$ (see Lemma 3.2  in \cite{carda}).  
\medskip

The second problem is an optimal control problem for the continuity equation: define $\s{K}_D$ to be the set of all pairs $(m, w) \in L^1([0,T] \times \bb{T}^d) \times L^1([0,T] \times \bb{T}^d;\bb{R}^d)$ such that $m \geq 0$ almost everywhere, $\ds\int_{\bb{T}^d}m(t,x)\dd x = 1$ for a.e. $t \in [0,T]$ (the subscript $D$ stands for ``dual''), and
$$
\left\{
\begin{array}{rcll}
\partial_t m + \mathrm{div}( w) &=& 0&\rm{in} \; (0,T)\times \T^d\\
m(0,\cdot) &=& m_0& \rm{in} \; \T^d.
\end{array}
\right.
$$
in the sense of distributions. Because of the integrability assumption on $w$, it follows that $t \mapsto m(t)$ has a unique narrowly continuous representative (cf. \cite{ags}). It is to this representative that we refer when we write $m(t)$, and thus $m(t)$ is well-defined as a probability density for all $t \in [0,T]$.

Define the functional
\begin{equation}\label{eq:dual}
\s{B}(m,w) = \int_{\bb{T}^d}  g(x)m(T,x)\dd x + \int_0^T \int_{\bb{T}^d} m(t,x)L\left(x,\frac{w(t,x)}{m(t,x)}\right) + F(x,m(t,x))\, \dd x \dd t
\end{equation}
on $\s{K}_D$. Recall that $L$ is defined just after \eqref{eq:hamiltonian_conjugate_bounds}.  We follow the convention that
\begin{equation}\label{conventionH*}
mL\left(x,\frac{w}{m}\right) = 
\left\{ 
\begin{array}{ll} 
+\infty, & {\rm{if}}\;\;\; m=0\ {\rm{and}}\ w\neq 0, \\ 
0, & {\rm{if}}\;\;\; m=0\ {\rm{and}}\ w = 0. 
\end{array} \right.
\end{equation}
Since $m \geq 0$, the second integral in (\ref{eq:dual}) is well-defined in $(-\infty,\infty]$ by the assumptions on $F$ and $L$. The first integral is well-defined and necessarily finite by the continuity of $ g$ and the fact that $m(T,x)\dd x$ is a probability measure.

We next state the ``dual problem" as \medskip
\begin{problem}[Dual Problem] \label{pr:dual}
Find $\ds\inf_{(m,w) \in \s{K}_D} \s{B}(m,w)$.
\end{problem} \medskip
\begin{proposition}\label{prop:duality}
Problems \ref{pr:smooth} and \ref{pr:dual} are in duality, i.e.
\begin{equation} \label{eq:duality}
\inf_{ u \in \s{K}_P} \s{A}( u) = -\min_{(m,w) \in \s{K}_D} \s{B}(m,w)
\end{equation}
Moreover, the minimum on the right-hand side is achieved by a pair $(m,w) \in \s{K}_D$ with $m\in L^\infty([0,T]\times\T^d)$ and $w\in L^{r'}([0,T]\times\T^d;\R^d).$
\end{proposition}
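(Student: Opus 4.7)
The plan is to combine a direct computation giving weak duality $\inf_{\s{K}_P}\s{A}\ge -\inf_{\s{K}_D}\s{B}$ with an application of the Fenchel--Rockafellar theorem, in the vein of \cite{carcarnaz,carda,cargra}, that simultaneously yields the reverse inequality and the existence of a dual minimizer, from which the claimed integrability will be read off. For weak duality, I would pick $u\in\s{K}_P$ and $(m,w)\in\s{K}_D$, multiply the continuity equation by $u$, and integrate by parts in both variables on $(0,T)\times\T^d$. Using $u(T,\cdot)=g$ and $m(0,\cdot)=m_0$, this gives
$$\int_{\T^d}g(x)m(T,x)\dd x-\int_{\T^d}u(0,x)\dd m_0(x)=\int_0^T\!\!\int_{\T^d}\bigl[m\,\partial_t u+w\cdot Du\bigr]\dd x\dd t.$$
Combining this identity with the two Young inequalities $F(x,m)+F^*(x,\a)\ge m\a$ at $\a=-\partial_t u+H(x,Du)$ and $m\,H(x,Du)+m\,L(x,w/m)\ge -Du\cdot w$ (a consequence of $H(x,p)+L(x,q)\ge -p\cdot q$, using convention \eqref{conventionH*} if $m=0$) yields $\s{A}(u)+\s{B}(m,w)\ge 0$, which is the inequality $\ge$ in \eqref{eq:duality}.

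For the reverse inequality I would apply Fenchel--Rockafellar on the Banach space $E=C^1([0,T]\times\T^d)$ to the convex functionals
$$\cF(u)=\begin{cases}-\int_{\T^d}u(0,x)\dd m_0(x), & u(T,\cdot)=g,\\ +\infty, & \text{otherwise},\end{cases}\qquad \cG(u)=\int_0^T\!\!\int_{\T^d}F^*(x,-\partial_t u+H(x,Du))\dd x\dd t.$$
The functional $\cF$ is convex, proper and lsc; the functional $\cG$ is convex as the composition of the nondecreasing convex function $F^*(x,\cdot)$ with the convex map $u\mapsto-\partial_t u+H(x,Du)$, and is continuous on $E$ because $F^*(x,\cdot)$ is finite on $\R$ with at most affine growth (cf.\ \eqref{remF*}) and the inner map is continuous from $E$ to $C^0$. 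At the test function $u(t,x):=g(x)$, $\cF$ is finite and $\cG$ is continuous, so the qualification hypothesis is met and Fenchel--Rockafellar delivers
$$\inf_{u\in E}\bigl(\cF(u)+\cG(u)\bigr)=\max_{\mu\in E'}\bigl(-\cF^*(-\mu)-\cG^*(\mu)\bigr),$$
with attainment of the maximum.

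The final and most delicate step is to identify an optimal $\mu\in E'$ with an admissible pair $(m,w)\in\s{K}_D$ for which $-\cF^*(-\mu)-\cG^*(\mu)=-\s{B}(m,w)$. A standard Legendre computation, entirely parallel to the ones in \cite{carcarnaz,carda,cargra}, shows that $\cG^*(\mu)<+\infty$ forces $\mu$ to be representable by a nonnegative density $m$ and a vector measure $w$, and
$$\cG^*(\mu)=\int_0^T\!\!\int_{\T^d}\bigl[F(x,m)+m\,L(x,w/m)\bigr]\dd x\dd t;$$
since $F(x,m)=+\infty$ for $m>\ov m$, this automatically encodes the density bound $0\le m\le\ov m$, hence $m\in L^\infty$. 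Dually, $\cF^*(-\mu)<+\infty$ encodes the continuity equation $\partial_t m+\diver w=0$ with $m(0)=m_0$ and produces the boundary contribution $\int_{\T^d}g\,m(T,\cdot)$. Finally, the bound $m\le\ov m$ together with the coercivity $L(x,q)\ge\frac{1}{r'C}|q|^{r'}-C$ of \eqref{eq:hamiltonian_conjugate_bounds} and the finiteness of $\int m\,L(x,w/m)$ force $w\in L^{r'}([0,T]\times\T^d;\R^d)$. The main obstacle is precisely this identification: showing that an abstract functional $\mu\in(C^1)'$ is actually represented by genuine $L^\infty\times L^{r'}$ data relies on the linear growth of $F^*$ at infinity from \eqref{remF*} and the $r'$-growth of $H^*$ from \eqref{eq:hamiltonian_conjugate_bounds}, and is the reason one obtains the regularity announced in the statement rather than merely measure-valued minimizers.
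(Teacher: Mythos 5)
Your overall strategy is the paper's own: the paper proves this proposition by invoking the Fenchel--Rockafellar theorem exactly as in Lemma 2.1 of \cite{carda}, and reads off the integrability of $(m,w)$ from the density constraint and the growth of $H^*$. Your weak-duality computation (integration by parts against the continuity equation plus the two Young inequalities, with convention \eqref{conventionH*}) is correct, and your final argument for $m\in L^\infty$ and $w\in L^{r'}$ (namely $F(x,\cdot)\equiv+\infty$ beyond $\ov m$ forces $0\le m\le\ov m$, and then $\int m\,L(x,w/m)<+\infty$ with $L(x,q)\ge \frac{1}{r'C}|q|^{r'}-C$ gives $\int |w|^{r'}\le \ov m^{\,r'-1}\int m|w/m|^{r'}<+\infty$) is exactly the point the paper makes.

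However, the way you set up Fenchel--Rockafellar contains a genuine mis-step. By placing both $\mathcal F$ and $\mathcal G$ on $E=C^1([0,T]\times\T^d)$, your dual variable $\mu$ lives in $(C^1)'$, a space of distributions of order one, and $\mathcal G^*(\mu)$ is \emph{not} computable by a pointwise Legendre transform: $\mathcal G$ is a nonlinear convex composite in $u$, not an integral functional of the dual pairing variable. Moreover your $\mathcal F$ contains no derivatives of $u$, so the finiteness of $\mathcal F^*(-\mu)$ cannot ``encode the continuity equation'' nor produce the boundary term $\int_{\T^d} g\,m(T,\cdot)$; all it says is that $\mu$ coincides with the functional $v\mapsto \int_{\T^d} v(0,\cdot)\dd m_0$ on test functions with $v(T,\cdot)=0$. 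The missing structure is the linear operator $\Lambda u=(\partial_t u, Du)$ from $C^1$ into $E_1=C^0\times C^0$: one must write $\mathcal G=\tilde{\mathcal G}\circ\Lambda$ with $\tilde{\mathcal G}(a,b)=\int F^*(x,-a+H(x,b))$, so that the dual variables are genuine Radon measures $(m,w)\in E_1'$, the conjugate $\tilde{\mathcal G}^*$ is computed pointwise (its recession functions are what force $m$ to be absolutely continuous with $0\le m\le\ov m$ and $w\ll m$), and the continuity equation with $m(0)=m_0$ together with the term $\int g\,m(T,\cdot)$ emerges from the adjoint relation $\mu=\Lambda^*(m,w)$ combined with the constraint coming from $\mathcal F^*$ --- not from $\mathcal F^*$ alone, as you claim. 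This operator splitting is precisely how the cited Lemma 2.1 of \cite{carda} is organized; your version can be repaired either by adopting it or by invoking the composition rule $(\tilde{\mathcal G}\circ\Lambda)^*(\mu)=\min\{\tilde{\mathcal G}^*(m,w):\Lambda^*(m,w)=\mu\}$, but as written the ``standard Legendre computation'' you appeal to does not apply to your $\mathcal G^*$ on $(C^1)'$, and this is exactly the delicate identification step you left unproved.
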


\begin{proof}
The proof relies on the Fenchel-Rockafellar  duality theorem (see for example \cite{EkTe}) and basically follows the lines of the proof of Lemma 2.1 from \cite{carda}, hence we omit it. 
The integrability of $(m,w)$ is just coming from the density constraint and from the growth condition of $H^*.$
\end{proof}

\begin{remark}\label{uniquemw} If $f$ is strictly increasing with respect to the second variable in $\T^d\times (0,\ov m)$, then the minimizer   $(m,w)$ is unique. 
\end{remark}

\medskip
In general one cannot expect Problem \ref{pr:smooth} to have a solution. This motivates us to relax it and search for solutions in a larger class. For this let us first state the following observation.
\begin{lemma}\label{lem:esti} Let $( u_n)$ be a minimizing sequence for Problem \ref{pr:smooth} and set $\alpha_n = -\partial_t  u_n + H(x,D u_n)$. Then $( u_n)$ is bounded in $BV([0,T]\times\T^d)\cap L^r([0,T]\times\T^d)$, the sequence $(\alpha_n)$ is bounded in $L^1([0,T]\times\T^d)$, with $\alpha_n\geq 0$ a.e., while $(D u_n)$ is bounded in $L^r([0,T]\times\T^d)$. Finally, there exists a Lipschitz continuous function $\psi:[0,T]\times \T^d\to \R$ such that $\psi(T,\cdot)= g$ and  $u_n\geq \psi$ for any $n$. 
\end{lemma}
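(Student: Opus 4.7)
The plan is to extract all four bounds from a single coercivity estimate on $\mathcal{A}$, combined with a comparison argument for the HJ equation supplying the Lipschitz floor $\psi$. Without loss of generality we may assume $\alpha_n := -\partial_t u_n + H(x,Du_n)\geq 0$ a.e. (as noted right after Problem~\ref{pr:smooth}, this only improves $\mathcal{A}(u_n)$). A competitor such as $u^\star(t,x):= g(x) + (T-t)\max_{y} |H(y,Dg(y))|$ lies in $\mathcal{K}_P$ and has $\mathcal{A}(u^\star)\leq C$ since $D u^\star = Dg$ is bounded and $F^*(x,\cdot)$ is locally bounded on $[0,\infty)$; hence $\mathcal{A}(u_n)\leq C$ for the minimizing sequence. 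Applying \eqref{remF*} pointwise yields the key lower bound
\begin{equation*}
\mathcal{A}(u_n) \;\geq\; \ov m \int_0^T\!\!\int_{\T^d}\alpha_n\, dx\,dt \;-\; T\int_{\T^d} F(x,\ov m)\,dx \;-\; \int_{\T^d} u_n(0,x)\,dm_0(x).
\end{equation*}

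To close the $L^1$--bound on $\alpha_n$, I need a uniform lower bound on $\int u_n(0)\,dm_0$, and this is exactly where the Lipschitz function $\psi$ enters. Define
\begin{equation*}
\psi(t,x) \;:=\; \inf\Bigl\{\int_t^T L(\gamma(s),\dot\gamma(s))\,ds + g(\gamma(T))\;:\; \gamma\in AC([t,T];\T^d),\ \gamma(t)=x\Bigr\}.
\end{equation*}
Under \textbf{(H2)}--\textbf{(H3)} and the superlinear growth of $L$, $\psi$ is Lipschitz on $[0,T]\times\T^d$, $\psi(T,\cdot)=g$, and $\psi$ is a viscosity solution (in particular a sub-solution) of $-\partial_t\psi + H(x,D\psi)=0$. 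Since $u_n\in C^1$ is a classical super-solution with $u_n(T,\cdot)=g=\psi(T,\cdot)$, the standard comparison principle for HJ gives $u_n\geq \psi$ on $[0,T]\times\T^d$. Consequently $\int u_n(0)\,dm_0 \geq -\|\psi\|_\infty$, and plugging back into the previous inequality produces $\|\alpha_n\|_{L^1}\leq C$.

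For the $L^r$ bound on $Du_n$, I would use the coercivity half of \eqref{eq:hamiltonian_bounds}, which rewrites as
\begin{equation*}
\frac{1}{rC}|Du_n|^r \;\leq\; C + \partial_t u_n + \alpha_n.
\end{equation*}
Integrating over $[0,T]\times\T^d$ gives
\begin{equation*}
\frac{1}{rC}\int_0^T\!\!\int_{\T^d}|Du_n|^r \;\leq\; CT + \int_{\T^d}\!g\,dx - \int_{\T^d}u_n(0,x)\,dx + \|\alpha_n\|_{L^1}.
\end{equation*}
The pointwise bound $u_n(0,\cdot)\geq \psi(0,\cdot)\geq -\|\psi\|_\infty$ turns the last two terms into a constant, yielding $\|Du_n\|_{L^r}\leq C$.

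Finally, for the $L^r$ and $BV$ bounds on $u_n$ itself: writing $\bar u_n(t):=\int_{\T^d} u_n(t,\cdot)\,dx$, Poincaré--Wirtinger on $\T^d$ gives $\|u_n(t)-\bar u_n(t)\|_{L^r}\leq C\|Du_n(t)\|_{L^r}$, so it suffices to bound $\bar u_n$ uniformly. Since $\bar u_n(T)=\int g\,dx$ is fixed and
\begin{equation*}
\bar u_n'(t) \;=\; \int_{\T^d}\!\bigl(H(x,Du_n)-\alpha_n\bigr)\,dx,\qquad |H(x,Du_n)|\leq \tfrac{C}{r}|Du_n|^r + C,
\end{equation*}
the already established $L^r$ and $L^1$ bounds give $\bar u_n \in W^{1,1}([0,T])$ uniformly, hence $\bar u_n$ is uniformly bounded, which upgrades to $\|u_n\|_{L^r}\leq C$. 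The same estimate $|\partial_t u_n|\leq \tfrac{C}{r}|Du_n|^r + C + \alpha_n$ shows $\partial_t u_n$ is bounded in $L^1$, so together with the $L^r\hookrightarrow L^1$ bound on $Du_n$ we obtain a uniform $BV([0,T]\times\T^d)$ bound.

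The one non-mechanical point is the construction and the comparison step for $\psi$; everything else is bookkeeping once we know $u_n$ is trapped above a fixed Lipschitz barrier. I do not expect any genuine obstacle, but it is worth being careful that the comparison principle is applied in the correct direction (classical super-solution vs.\ viscosity sub-solution with matching terminal data).
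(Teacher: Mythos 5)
Your construction of $\psi$ via the value function and the comparison $u_n\ge\psi$, as well as the later steps ($L^r$ bound on $Du_n$, Poincar\'e--Wirtinger, control of the spatial mean starting from $t=T$, $L^1$ bound on $\partial_t u_n$), are in line with the paper. But there is a genuine gap at the central step, the $L^1$ bound on $\alpha_n$. After using $\s{A}(u_n)\le C$, your coercivity inequality reads
\begin{equation*}
\ov m\,\|\alpha_n\|_{L^1}\;\le\; C+\int_{\T^d}u_n(0,x)\,\dd m_0(x),
\end{equation*}
so what you need is an \emph{upper} bound on $\int_{\T^d}u_n(0,\cdot)\,\dd m_0$, not the lower bound $\int_{\T^d}u_n(0,\cdot)\,\dd m_0\ge-\|\psi\|_\infty$ that the barrier provides; plugging a lower bound into the right-hand side yields nothing, and a priori $u_n(0,\cdot)$ and $\alpha_n$ could blow up simultaneously (large $\alpha_n$ pushes $u_n(0,\cdot)$ up, which in turn compensates the term $\ov m\|\alpha_n\|_{L^1}$ in $\s{A}$). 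The missing ingredients are exactly the two the paper uses: (i) integrate $\alpha_n=-\partial_t u_n+H(x,Du_n)$ over $[0,T]\times\T^d$, using $H\ge -C$ and $u_n(T,\cdot)=g$, to get $\int_{\T^d}u_n(0,x)\dd x\le\|\alpha_n\|_{L^1}+C$; and (ii) the strict gap in \textbf{(H2)}, $m_0\le\ov m-\ov c$. Combining these, $\ov m\|\alpha_n\|_{L^1}-\int_{\T^d}u_n(0)\,\dd m_0\ \ge\ \int_{\T^d}u_n(0,x)\bigl(\ov m-m_0(x)\bigr)\dd x-C\ \ge\ \ov c\int_{\T^d}\bigl(u_n(0,x)+\|\psi\|_\infty\bigr)\dd x-C'$, and the upper bound $\s{A}(u_n)\le C$ first forces $u_n(0,\cdot)$ to be bounded in $L^1$ (here the pointwise lower bound via $\psi$ is what is actually used), and only then, going back through the duality-type inequality with $m_0\le \ov m$, gives $\|\alpha_n\|_{L^1}\le C$. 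Note that your proposal never invokes \textbf{(H2)}; since the final estimate degenerates as $\ov c\to 0$ (the coefficient multiplying $\|\alpha_n\|_{L^1}$ becomes $0$), this omission is not cosmetic. Since your $Du_n$, $L^r$ and $BV$ estimates all feed on $\|\alpha_n\|_{L^1}\le C$, the whole chain needs this repair, after which the remainder of your argument goes through and is essentially the paper's.
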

\begin{proof}
As $F^*(\cdot,\alpha)=0$ for $\alpha\leq 0$, we can assume without  loss of generality that  $\alpha_n\geq 0$ (indeed, if we replace $\alpha_n$ with its positive part, the part with $F^*$ does not increase, and the value of $u_n(0,x)$ does not decrease, by the maximum principle applied to the HJ equation connecting $u_n$ to $\alpha_n$). By comparison, $ u_n\geq \psi$ where $\psi$ is the unique Lipschitz continuous viscosity  solution to 
$$
\left\{
\begin{array}{rcll}
-\partial_t \psi + H(x,D\psi) & = & 0& \rm{in}\; (0,T)\times \T^d\\
\psi(T,x) &=&  g(x)& \rm{in}\;  \T^d.
\end{array}
\right.
$$
So $( u_n)$ is uniformly bounded from below. Integrating the equation for $( u_n)$ on $[0,T]\times \T^d$ and using the fact that $H\geq -C$ and the fact that $ g$ is bounded, we get  (up to redefining the constant $C>0$)
$$
\int_{\T^d}  u_n(0,x)\dd x \leq \int_0^T\int_{\T^d} \alpha_n\dd x\dd t + C.
$$
So, by \eqref{remF*} and for $n$ large enough,  
$$
\begin{array}{rl}
\inf_{ u \in \s{K}_P} \s{A}( u)+1 \; \geq & \ds \int_0^T \int_{\T^d} F^*(x,\alpha_n)\dd x\dd t - \int_{\T^d}  u_n(0,x)m_0\dd x \\
\geq & \ds  \int_0^T  \int_{\T^d}   \ov m \alpha_n\dd x\dd t - \int_{\T^d}  u_n(0,x)m_0\dd x -C\\
\geq & \ds \int_{\T^d}  u_n(0,x)(\ov m- m_0)\dd x -C.
\end{array}
$$
By {\bf (H2)} $\ov m- m_0>\ov c$: as we know that $ u_n(0,\cdot)$ is bounded from below, we get that $( u_n(0,\cdot))$ is bounded in $L^1(\T^d)$. Thus, as $\alpha_n\geq 0$, we also have that $(\alpha_n)$ is bounded in $L^1([0,T]\times \T^d)$. Then integrating the equation $\alpha_n = -\partial_t  u_n + H(x,D u_n)$ over $[t,T]\times \T^d$ and using the lower bound on $H$, we get on the one hand  
$$
\int_{\T^d}  u_n(t,x)\dd x \leq \int_t^T\int_{\T^d} \alpha_n\dd x\dd t  + C,
$$
which, in view of the lower bound on $ u_n$, gives an $L^\infty([0,T])$ bound on $\ds\lg  u_n(t,\cdot)\rg= \int_{\T^d}  u_n(t,x)\dd x$. We  integrate again the equation $\alpha_n = -\partial_t  u_n + H(x,D u_n)$ over $[0,T]\times \T^d$ and use the coercivity of $H$ and Poincar\'e's inequality to get 
\begin{align*}
\ds C\; \geq &  \int_0^T\int_{\T^d} \alpha_n\dd x\dd t  + C \; \geq \; \ds  \int_0^T\int_{\T^d} H(x,D u_n)\dd x\dd t \; 
\geq \; \ds (1/C) \int_0^T\int_{\T^d}  |D u_n|^r\dd x\dd t - C\\ 
\geq & \ds (1/C) \int_0^T\int_{\T^d}  | u_n- \lg  u_n(t,\cdot)\rg|^r\dd x\dd t -C \; 
\geq \; \ds (1/C) \int_0^T\int_{\T^d}  | u_n|^r\dd x\dd t -C.
\end{align*} 
In particular $(D u_n)$ and $(u_n)$ are bounded in $L^r([0,T]\times\T^d)$. Thus $\partial_t  u_n = - \alpha_n+ H(x,D u_n)$ is bounded in $L^1([0,T]\times\T^d)$. The result follows. 
\end{proof}

By the results of Lemma \ref{lem:esti} we introduce a relaxation of the Problem \ref{pr:smooth}. 
Let us denote by ${\mathcal K}_{R}$ (here the subscript $R$ stands for ``relaxed'') the set of pairs $( u, \alpha)$ such that $ u\in BV([0,T]\times\T^d)$ with $D u\in L^r([0,T]\times\T^d;\R^d)$ and $u(T^-,\cdot)\geq g$ a.e., $\alpha$ is a nonnegative measure on $[0,T]\times \T^d$, and, if we extend $(u,\alpha)$ by setting $u= g$ and $\a:= H(\cdot, Dg)\dd x\dd t$ on $(T, T+1)\times \T^d$, then the pair $(u,\a)$ satisfies 
$$
-\partial_t  u+ H(x,D u)\leq \alpha
$$
in the sense of distribution in $(0,T+1)\times \T^d$. Note that the extension of $(u,\a)$ to $[0,T+1]\times \T^d$ just expresses the fact that $u(T^+)= g$ and that $\alpha$ compensates the possible jump from $u(T^-)$ to $g$. We set
$$
\s{A}( u,\alpha) = \int_0^T \int_{\bb{T}^d} F^*(x,\alpha^{ac}(t,x)) \dd x \dd t + \ov m \a^s([0,T]\times \T^d) - \int_{\bb{T}^d}  u(0^+,x)\, \dd m_0(x) .
$$
where $\alpha^{ac}$ and $\alpha^s$ are respectively  the absolutely continuous part and the singular part of the measure $\alpha$. 
\begin{problem}[Relaxed Problem] \label{pr:relaxed}
Find $\ds\inf_{( u,\alpha) \in \s{K}_{R}} \s{A}( u,\alpha)$.
\end{problem} \medskip
Let us consider the following result as a counterpart of Lemma 2.7 from \cite{cargra} in our case.

\begin{lemma}\label{lem:ineqmwphialpha} Let $(m, w)\in {\mathcal K}_D$ such that $m\in L^\infty([0,T]\times\T^d)$ and $( u, \alpha)\in {\mathcal K}_R$ an arbitrary competitor for Problem \ref{pr:relaxed}. Then, for every $t\in [0,T]$, we have  
\begin{align*}
\int_{0}^{t}\int_{\T^d} - m H^*\left(x,-\frac{w}{m}\right)\dd x\dd t &\leq\int_{\T^d} m(t,x)  u(t^-,x)\dd x- \int_{\T^d} m_0(x)  u(0^+,x)\dd x\\
&\qquad + \int_{0}^{t}\int_{\T^d} \alpha^{ac} m\dd x\dd t + \ov m \a^s([0, t]\times \T^d) 
\end{align*}
and 
\begin{align*}
\int_t^{T}\int_{\T^d} - m H^*\left(x,-\frac{w}{m}\right)\dd x\dd t &\leq\int_{\T^d} m(T,x)  g(x)\dd x- \int_{\T^d} m(t,x)  u(t^+,x)\dd x\\
&\qquad + \int_t^{T}\int_{\T^d} \alpha^{ac} m\dd x\dd t + \ov m \a^s([t, T]\times \T^d). 
\end{align*}
Moreover we can take $t=0$ in the above inequalities. 
If, finally, equality holds in the second inequality when $t=0$, then $w= -mD_pH(\cdot, Du)$ a.e. and 
$$
\limsup_{\ep\to 0} m_\ep(t,x)= \ov m \qquad for \; \a^s-a.e.\ (t,x)\in [0,T]\times \T^d,
$$
where $m_\ep$ is any standard mollification of $m$. 
\end{lemma}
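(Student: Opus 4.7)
The strategy is to test the distributional inequality $-\partial_t u + H(x,Du)\leq \alpha$ that defines $\mathcal{K}_R$ against a regularization of $m$, use the continuity equation to integrate by parts, and close the estimate via Fenchel's inequality $H(x,Du)+H^*(x,-w/m)\geq -Du\cdot w/m$. Concretely, I would fix a smooth space-time mollification kernel and set $m_\e := m\ast \rho_\e$, $w_\e := w\ast \rho_\e$, extended by $m_0$ and $0$ respectively for $t<0$ and by suitable constants near $t=T$. Since $\rho_\e\geq 0$ and $m\leq \ov m$ a.e., the smooth function $m_\e$ is nonnegative and satisfies $m_\e\leq \ov m$ pointwise, and $(m_\e,w_\e)$ solves $\partial_t m_\e + \mathrm{div}\, w_\e = r_\e$ where $r_\e\to 0$ in $L^1$ by the DiPerna--Lions commutator lemma.

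Testing $-\partial_t u + H(x,Du)\leq \alpha$ against $m_\e\geq 0$ on $[0,t]\times\T^d$ yields
\[
\int_0^t\!\!\int_{\T^d} H(x,Du)\,m_\e\dd x\dd s \ -\ \int_0^t\!\!\int_{\T^d} m_\e\,\partial_t u\dd x\dd s\ \leq\ \int_{[0,t]\times\T^d} m_\e\dd \alpha.
\]
An integration by parts in time converts the second term into $\int_0^t\!\int u\,\partial_t m_\e + [\int u m_\e]_0^t$, and using $\partial_t m_\e = -\mathrm{div}\,w_\e + r_\e$ together with an integration by parts in space it becomes $\int_0^t\!\int Du\cdot w_\e + o(1)$. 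Combining with Fenchel's inequality $H(x,Du)m_\e + m_\e H^*(x,-w_\e/m_\e)\geq -Du\cdot w_\e$ (where $w_\e/m_\e$ is understood via the convention \eqref{conventionH*}), the $Du\cdot w_\e$ terms cancel and one is left with
\[
\int_0^t\!\!\int_{\T^d} -m_\e H^*\!\left(x,-\tfrac{w_\e}{m_\e}\right) - [\,\langle u,m_\e\rangle\,]_0^t \ \leq\ \int_{[0,t]\times\T^d} m_\e\dd \alpha + o(1).
\]

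I would then pass to the limit $\e\to 0$. The functional $(m,w)\mapsto \int mH^*(x,-w/m)$ is convex and lower semicontinuous in $(m,w)$ for the weak-$*$ convergence (a classical fact about perspective functions), which handles the left-hand side; for the right-hand side I use that $m_\e\leq \ov m$ pointwise, so $\int m_\e\dd\alpha^s\leq \ov m\,\alpha^s([0,t]\times\T^d)$, while $\int m_\e \alpha^{ac}\to \int m\,\alpha^{ac}$ by dominated convergence. The boundary terms $\int u(t^-)m(t)-\int u(0^+)m_0$ arise after choosing the good representatives (narrowly continuous $m(t)$, left/right BV traces of $u$), which is the delicate bookkeeping point; the choice $t=0$ is allowed since $m_0$ and $u(0^+)$ are both well defined. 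The second inequality is obtained by running the same argument on $[t,T+1]\times\T^d$ using the extension of $(u,\alpha)$ by $(g,H(\cdot,Dg)\dd x\dd t)$ that appears in the definition of $\mathcal{K}_R$: the extension precisely makes $-\partial_t u + H(x,Du)\leq \alpha$ valid across $t=T$ and encodes the potential jump $u(T^-)-g$ as a singular mass of $\alpha$ at $\{T\}$, so the boundary term at $T$ is replaced by $\int m(T)g$ up to the $\ov m\,\alpha^s$ contribution.

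For the equality statement, assume equality holds in the $t=0$ version of the second inequality. Then every inequality used in the derivation must be an equality in the limit. The Fenchel inequality becoming an equality $\{m>0\}$-a.e. is exactly the extremality condition $-w/m = D_pH(x,Du)$, i.e.\ $w=-mD_pH(x,Du)$ a.e.; on the set $\{m=0\}$ the constraint is vacuous but forces $w=0$ by the convention \eqref{conventionH*}. Equality in $\int m_\e \dd\alpha^s \leq \ov m\,\alpha^s$ in the limit forces the upper bound to be saturated $\alpha^s$-a.e.: by definition of $\limsup$ and since $m_\e(t,x)\leq \ov m$ everywhere, the only way the integral against $\alpha^s$ can reach $\ov m\,\alpha^s$ is that $\limsup_{\e\to0} m_\e(t,x) = \ov m$ at $\alpha^s$-a.e.\ $(t,x)\in [0,T]\times\T^d$.

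The main obstacle is the joint handling of the two low-regularity ingredients: $u$ is only $BV$ in time (so the integration by parts produces left/right traces and possible jumps), and $\alpha$ is a measure with a nontrivial singular part, so $m$ must be regularized carefully to be paired against it without losing the sharp constant $\ov m$. Beyond that, every step is a standard duality-in-MFG computation in the spirit of \cite{cargra} and \cite{carda}.
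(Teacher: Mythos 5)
Your plan follows the paper's own route almost step for step (mollify $(m,w)$, test the distributional inequality defining $\mathcal K_R$ against $m_\e\ge 0$, integrate by parts through the continuity equation, apply Fenchel's inequality, use $m_\e\le \ov m$ on the singular part, pass to the limit), but one step is justified by an argument that points the wrong way and would not close the proof: the limit of the kinetic term. You propose to handle $\int m_\e H^*(x,-w_\e/m_\e)$ by weak-$*$ lower semicontinuity of the perspective functional. L.s.c. gives $\int m H^*(x,-w/m)\le \liminf_\e \int m_\e H^*(x,-w_\e/m_\e)$, which is useless here: since this term sits on the left-hand side of the $\e$-level inequality with a minus sign, to pass to the limit you need precisely the opposite bound, $\limsup_\e \int m_\e H^*(x,-w_\e/m_\e)\le \int m H^*(x,-w/m)$, i.e. upper semicontinuity along the mollification, or outright convergence. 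The paper fills this in by proving convergence of $\int m_\e H^*(x,-w_\e/m_\e)$ to $\int m H^*(x,-w/m)$, exploiting the strong $L^q\times L^{r'}$ convergence of $(m_\e,w_\e)$ (with $m\le\ov m$) and the argument of Lemma 2.7 in \cite{cargra}; alternatively one can argue that joint convexity of $(m,w)\mapsto mH^*(x,-w/m)$ plus Jensen's inequality makes the functional decrease under mollification, up to an error coming from the $x$-dependence of $H^*$. Either way the needed ingredient is convexity/strong convergence, not weak-$*$ l.s.c., and as written this step of your proposal fails.

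Two smaller points. The DiPerna--Lions commutator lemma is unnecessary: mollifying both $m$ and $w$ with the same kernel commutes with $\partial_t$ and $\mathrm{div}$, so $\partial_t m_\e+\mathrm{div}\, w_\e=0$ exactly in the interior; the paper sidesteps the temporal boundary by extending $(u,\alpha)$ and $(m,w)$ past $T$ and working on $[\eta,T+\eta]$ with $\eta>\e$. More importantly, the boundary terms you defer as ``delicate bookkeeping'' are where the paper does genuine extra work: to recover the trace $u(0^+,\cdot)$ it integrates the Hamilton--Jacobi inequality against the fixed function $m_\e(\eta,\cdot)$ on $(0,\eta)\times\T^d$, obtaining $\int u(0^+)m_\e(\eta)\le \int u(\eta)m_\e(\eta)+C\eta+\ov m\,\alpha((0,\eta)\times\T^d)$, restricts to $\eta$'s where $u$ has no jump, and only then sends $\e\to0$ and $\eta\to0$ using the weak-$*$ continuity of $t\mapsto m(t)$; some device of this kind is needed, since a direct integration by parts against a function that is only BV in time does not by itself produce the correct one-sided traces. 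Your treatment of the equality case (Fenchel extremality giving $w=-mD_pH(\cdot,Du)$, and a reverse-Fatou argument with the uniform bound $m_\e\le\ov m$ giving $\limsup_\e m_\e=\ov m$ $\alpha^s$-a.e.) agrees with the paper.
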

\begin{proof} We prove the result only for $t=0$, the general case follows by a similar (and simpler) argument. 
We first extend the pairs $(u,\a)$ and $(m,w)$ to $(0,T+1)\times \T^d$ by setting $u= g$ and $\a:= H(\cdot, Dg)\dd x\dd t$, $m(s,x)=m(T,x)$, $w(s,x)=0$ on $(T,T+1)\times \T^d$. Note that  
$$\partial_t m+\diver(w)=0\;\;\; {\rm{and}}\;\;\; -\partial_t  u+ H(x,D u)\leq \alpha\;\;\; {\rm{on}}\;\; (0,T+1)\times \T^d.$$  
We smoothen the pair $(m,w)$ in a standard way into $(m_\e,w_\e)$: $m_\ep:= m\star \rho_\ep$ and $w_\ep:= w\star \rho_\ep$, where the mollifier $\rho$ has a support in the unit ball of $\R^{d+1}$ and $\rho_\ep:= \ep^{-d-1}\rho(\cdot/\ep)$. Then, for any $\eta>\e$, we have, since $m_\e\leq \ov m$,  
\be\label{gduoysd}
\begin{array}{l}
\ds \int_\eta^{T+\eta}\int_{\T^d}  u \partial_t m_\e +m_\e H(x,D u)\dd x\dd t - \left[ \int_{\T^d} m_\ep  u\dd x\right]_{\eta^+}^{T+\eta}\\
\ds \qquad \qquad  \leq \int_{\eta}^{T+\eta}\int_{\T^d} m_\e \dd \alpha
\leq \int_{\eta}^{T+\eta}\int_{\T^d} \alpha^{ac} m_\e\dd x\dd t
+ \ov m \alpha^{s}([\eta, T+\eta]\times \T^d) 
\end{array}
\ee
where, as $\partial_t m_\e+ \mathrm{div}( w_\e)=0$,  
$$
\int_\eta^{T+\eta}\int_{\T^d}  u \partial_t m_\e\dd x\dd t= \int_\eta^{T+\eta}\int_{\T^d} D u\cdot w_\e\dd x\dd t.
$$
In the followings, we shall consider only such $\eta$'s for which no jump of $u$ occurs, in particular $\ds\int_{\T^d}u(\eta^-,x)\dd x=\ds\int_{\T^d}u(\eta^+,x)\dd x=\ds\int_{\T^d}u(\eta,x)\dd x.$
So, by convexity of $H$,  \eqref{gduoysd} and the above equality, 
$$
\begin{array}{l}
\ds \int_\eta^{T+\eta}\int_{\T^d} - m_\e H^*\left(x,-\frac{w_\e}{m_\e}\right)\dd x\dd t \; 
\leq \; \ds \int_\eta^{T+\eta}\int_{\T^d} w_\e\cdot D u + m_\e H(x,D u)\dd x\dd t \\
\qquad \qquad \leq \; \ds  \left[ \int_{\T^d} m_\e  u\dd x\right]_{\eta}^{T+\eta} + \int_\eta^{T+\eta}\int_{\T^d} \alpha^{ac} m_\e\dd x\dd t
+ \ov m \alpha^{s}([\eta, T+\eta]\times \T^d) .
\end{array}
$$
We multiply the inequality $-\partial_t  u+ H(x,D u)\leq \alpha$ by $m_\ep(\eta,\cdot)$ and integrate on $(0,\eta)\times \T^d$ to get, as $m_\ep$ is bounded by $\ov m$ and $H$ is bounded from below,  
$$
\int_{\T^d}  u(0^+,x)m_\ep(\eta,x)\dd x \leq   \int_{\T^d}  u(\eta,x)m_\ep(\eta,x)\dd x + C\eta + \ov m \alpha((0,\eta)\times \T^d) .
$$
Note that $\ov m \alpha((0,\eta)\times \T^d)= \ov m \a^s([0,\eta)\times \T^d)+o(1)$, where $o(1)\to 0$ as $\eta\to 0$. 
So
$$
\begin{array}{l}
\ds \int_\eta^{T+\eta}\int_{\T^d} - m_\ep H^*\left(x,-\frac{w_\ep}{m_\ep}\right)\dd x\dd t \le\\ 
 \leq \; \ds  \int_{\T^d}  [m_\ep(T+\eta,x)  u(T+\eta,x)- u(0^+,x)m_\ep(\eta,x)]\dd x\\
 \qquad \qquad \ds + \int_\eta^{T+\eta}\int_{\T^d} \alpha^{ac} m_\ep\dd x\dd t
+ \ov m \a^s([0, T+\eta]\times \T^d) +o(1).
\end{array}
$$
We now let $\ep\to0$. 
By convergence of $(m_\ep, w_\ep)$ to $(m,w)$ in $L^q\times L^{r'},\;\forall q\ge1$, and by the fact that  
$$\lim_{\e\to 0}\int_\eta^{T+\eta}\int_{\T^d} m_\e H^*(x,-w_\e/m_\e)\dd x\dd t=\int_\eta^{T+\eta}\int_{\T^d} mH^*(x,-w/m)\dd x\dd t,$$ 
(see the proof of Lemma 2.7. from \cite{cargra}) we obtain for all $\eta>0$, chosen above,
$$
\begin{array}{l}
\ds \int_\eta^{T+\eta}\int_{\T^d} - m H^*\left(x,-\frac{w}{m}\right)\dd x\dd t \le\\
 \leq \; \ds   \int_{\T^d}  [m(T+\eta,x)  u(T+\eta,x)- u(0^+,x)m(\eta,x)]\dd x \\
  \qquad \qquad \ds+ \int_\eta^{T+\eta}\int_{\T^d} \alpha^{ac} m\dd x\dd t
+ \ov m \a^s([0, T+\eta]\times \T^d)+o(1). 
\end{array}
$$
By definition of the extension of the maps $ u$ and $m$, 
$$
\begin{array}{l}
\ds  \int_{\T^d}  [m(T+\eta)  u(T+\eta)- u(0^+,x)m(\eta,x)]\dd x+ \int_\eta^{T+\eta}\int_{\T^d} \alpha^{ac} m\dd x\dd t
+ \ov m \a^s([0, T+\eta]\times \T^d)\\
\qquad \ds = 
 \int_{\T^d}  [m(T,x)  g(x)- u(0^+,x)m(\eta,x)]\dd x+ \int_\eta^{T+\eta}\int_{\T^d} \alpha^{ac} m\dd x\dd t
+ \ov m \a^s([0, T]\times \T^d).
\end{array}
$$
We finally let $\eta\to 0$ and get 
$$
\begin{array}{l}
\ds \int_0^{T}\int_{\T^d} - m H^*\left(x,-\frac{w}{m}\right)\dd x\dd t \\
\qquad \ds \le 
 \int_{\T^d}  [m(T,x)  g(x)- u(0^+,x)m_0(x)]\dd x+ \int_0^{T}\int_{\T^d} \alpha^{ac} m\dd x\dd t
+ \ov m \a^s([0, T]\times \T^d)
\end{array}
$$
thanks to the $L^\infty-$weak-$\star$ continuity of $t\mapsto m(t)$ and the $L^1$ integrability of $u(0^+,\cdot)$. 

The proof of the equality $w=-mD_pH(\cdot, Du)$ when equality holds in the above inequality follows exactly the proof of the corresponding statement in \cite{cargra}, so we omit it. Note that, if equality holds, then all the above inequalities must become equalities as $\ep$ and then $\eta$ tend to $0$. In particular, from inequality \eqref{gduoysd}, we must have 
$$
\lim_{\eta\to 0} \limsup_{\ep\to 0} \int_\eta^{T+\eta} \int_{\T^d} m_\ep(t,x)\dd\a^s(t,x)= \ov m \a^s([0,T]\times \T^d). 
$$
By Fatou's lemma, this implies that 
$$
\ov m \a^s([0,T]\times \T^d)\leq \int_0^T \int_{\T^d} \limsup_{\ep\to 0} m_\ep(t,x)\dd\a^s(t,x), 
$$
where the right-hand side is also bounded above by the left-hand side since $m_\ep\leq \ov m$. So $\ds \limsup_{\ep\to 0} m_\ep=\ov m$ $\a^s-$a.e.
\end{proof}

\begin{proposition}\label{inf=min} We have 
\be\label{ineq:relax}
 \inf_{ u\in {\mathcal K}_P} \s{A}( u) = \min_{( u,\alpha)\in {\mathcal K}_R} \s{A}( u,\alpha) . 
\ee
Moreover, if $(u,\a)$ is a minimum of $\s{A}$, then $\a= \a\mres [0,T)\times \T^d + (u(T^-,\cdot)-g)\dd\left(\delta_T\otimes{\mathcal H}^d\mres\T^d\right)$. 
\end{proposition}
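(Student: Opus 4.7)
The plan is to prove \eqref{ineq:relax} by a double inequality, then produce an optimizer by compactness, and finally extract the structure of $\alpha$ near $t=T$ from the extended distributional constraint built into $\mathcal{K}_R$. The easy inequality $\min_{\mathcal{K}_R}\s{A}\leq \inf_{\mathcal{K}_P}\s{A}$ follows by injecting $\mathcal{K}_P$ into $\mathcal{K}_R$: given $u\in\mathcal{K}_P$, set $\alpha:=(-\partial_t u+H(x,Du))_+\dd x\dd t$. Then $(u,\alpha)$ satisfies the defining constraint of $\mathcal{K}_R$, with no jump at $T$ since $u(T)=g$, and on $(T,T+1)$ the identity $-\partial_t u+H(x,Du)=H(x,Dg)=\alpha$ holds. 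Because $F^*(x,\cdot)\equiv 0$ on $(-\infty,0]$ and $\alpha^s=0$, one gets $\s{A}(u,\alpha)=\s{A}(u)$.

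For the reverse inequality, I would combine Proposition \ref{prop:duality} with Lemma \ref{lem:ineqmwphialpha}. Pick any $(u,\alpha)\in\mathcal{K}_R$ and a minimizer $(m,w)\in\mathcal{K}_D$ of the dual problem (so $m\in L^\infty$). Using $L(x,q)=H^*(x,-q)$, the second inequality of Lemma \ref{lem:ineqmwphialpha} at $t=0$ rearranges as
\[
\int_0^{T}\!\!\int_{\T^d} m L(x,w/m)\dd x\dd t \;\geq\; \int_{\T^d}m_0 u(0^+)\dd x - \int_{\T^d}g\,m(T)\dd x - \int_0^{T}\!\!\int_{\T^d}\alpha^{ac}m\dd x\dd t - \ov m\,\alpha^s([0,T]\times\T^d).
\]
Plugging this into the definitions of $\s{A}$ and $\s{B}$ gives
\[
\s{A}(u,\alpha)+\s{B}(m,w) \;\geq\; \int_0^{T}\!\!\int_{\T^d} \bigl(F^*(x,\alpha^{ac})+F(x,m)-\alpha^{ac}m\bigr)\dd x\dd t \;\geq\; 0
\]
by Fenchel--Young (valid since $m\in[0,\ov m]$). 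Since $-\s{B}(m,w)=-\min_{\mathcal{K}_D}\s{B}=\inf_{\mathcal{K}_P}\s{A}$, this gives $\s{A}(u,\alpha)\geq \inf_{\mathcal{K}_P}\s{A}$ for every $(u,\alpha)\in\mathcal{K}_R$, closing the double inequality.

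For the existence of a minimizer, I would pick a minimizing sequence $(u_n,\alpha_n)\subset\mathcal{K}_R$. Adapting the estimates of Lemma \ref{lem:esti} to the relaxed setting, one obtains $(u_n)$ bounded in $BV\cap L^r$, $(Du_n)$ bounded in $L^r$, $(u_n(0^+,\cdot))$ bounded in $L^1$, and $(\alpha_n)$ bounded in total variation. Up to a subsequence, $u_n\to u$ in $L^1$ and a.e., $Du_n\weak Du$ in $L^r$, and $\alpha_n\weaks\alpha$ as measures. The distributional inequality $-\partial_t u+H(x,Du)\leq\alpha$ on $(0,T+1)\times\T^d$ passes to the limit because convexity of $H$ in the gradient gives $\int\varphi H(x,Du)\leq\liminf_n\int\varphi H(x,Du_n)$ for any $\varphi\geq 0$. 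The functional $\int F^*(x,\alpha^{ac})\dd x\dd t+\ov m\,\alpha^s$ is the standard convex, weak-$\star$ lower semicontinuous envelope of a convex integrand with linear recession $\ov m\,|\cdot|$; together with continuity of the trace at $\{t=0\}$ in the $BV$ framework and $m_0\in L^\infty$, this makes $\s{A}$ lower semicontinuous, so $(u,\alpha)$ is a minimizer.

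Finally, for the structure of the minimizer at $T$, recall that after extension the constraint reads $-\partial_t u+H(x,Du)\leq\alpha$ on $(0,T+1)\times\T^d$ with $u\equiv g$ and $\alpha\equiv H(\cdot,Dg)\dd x\dd t$ on $(T,T+1)$. The downward jump of $u$ from $u(T^-,\cdot)$ to $g$ contributes exactly $(u(T^-,\cdot)-g)\,\delta_T\otimes\mathcal{H}^d\mres\T^d$ to $-\partial_t u$ as a distribution, while $H(x,Du)$ is locally integrable and places no mass on the hyperplane $\{t=T\}$. Testing the inequality against nonnegative functions supported in shrinking neighbourhoods of $\{t=T\}$ therefore forces
\[
\alpha\mres\{T\}\times\T^d \;\geq\; (u(T^-,\cdot)-g)\,\delta_T\otimes\mathcal{H}^d\mres\T^d.
\]
If the inequality were strict, the excess $\mu:=\alpha\mres\{T\}\times\T^d-(u(T^-,\cdot)-g)\,\delta_T\otimes\mathcal{H}^d\mres\T^d$ would be a nontrivial nonnegative measure on $\{T\}\times\T^d$; replacing $\alpha$ by $\alpha-\mu$ would still give an element of $\mathcal{K}_R$ but decrease $\s{A}$ by $\ov m\,\mu(\{T\}\times\T^d)>0$, contradicting minimality. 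The main obstacle I anticipate is in the existence step, specifically the passage to the limit in the trace term $-\int u(0^+)\dd m_0$: weak-$\star$ $BV$ convergence alone is too weak to pass to the limit in traces at $\{t=0\}$, so one will likely need to exploit the one-sided control of $\partial_t u_n$ provided by the $L^1$ bound on $(\alpha_n)$ together with the coercivity of $H$, or else insert an intermediate mollification argument to upgrade the convergence.
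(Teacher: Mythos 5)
Your proof is correct and follows the same skeleton as the paper's: the easy inequality by injecting $\mathcal{K}_P$ into $\mathcal{K}_R$, the reverse inequality by combining Lemma \ref{lem:ineqmwphialpha} at $t=0$ with Fenchel--Young and the duality of Proposition \ref{prop:duality}, and the identification of $\alpha\mres\{T\}\times\T^d$ by localizing the extended inequality at the hyperplane $\{t=T\}$ and then using minimality to rule out any excess mass (your ``subtract $\mu$'' competitor is exactly the paper's $\tilde\alpha$). The one place where you genuinely diverge is the existence step: you take an arbitrary minimizing sequence in $\mathcal{K}_R$ and must therefore re-derive the a priori bounds of Lemma \ref{lem:esti} for relaxed pairs (including the comparison-type lower bound, which for $BV$ subsolutions needs an extra mollification argument) and worry about lower semicontinuity of the trace term at $t=0$. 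The paper avoids this by exploiting the equality of infima, which at that point is already proved: it takes its minimizing sequence directly in $\mathcal{K}_P$ (extended by $g$ past $T$, with $\alpha_n:=-\partial_t u_n+H(x,Du_n)$), so Lemma \ref{lem:esti} applies verbatim and only a standard relaxation/lower semicontinuity statement for $\s{A}$ is needed. The trace issue you flag is real but not specific to your route -- it is exactly what the paper compresses into ``one easily shows by standard relaxation'' -- and your proposed fix (one-sided control of $\partial_t u_n$ from the $L^1$ bound on $\alpha_n$ and the lower bound on $H$, together with $m_0<\ov m-\ov c$ so that concentration of $\alpha_n$ near $t=0$ costs more than it gains) is the right mechanism; your approach simply buys generality you do not need, at the price of redoing estimates the paper gets for free.
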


\begin{proof} We follow here \cite{graber}. Inequality $\geq$ is obvious and we now prove the reverse one. Let us fix $( u,\alpha)\in {\mathcal K}_R$ and let $(m,w)$ be an optimal solution for the dual problem. Then 
$$
\begin{array}{rl}
\ds \s{A}( u,\alpha) \; = & \ds \int_0^T\int_{\T^d} F^*(x,\alpha^{ac})\dd x\dd t+\ov m \a^s([0,T]\times \T^d) - \int_{\T^d} m_0(x) u(0^+,x)\dd x\\
\geq & \ds  \int_0^T\int_{\T^d} (m\alpha^{ac}- F(x,m))\dd x\dd t +\ov m\a^s([0,T]\times \T^d) - \int_{\T^d} m_0(x) u(0^+,x)\dd x\\
\geq & \ds \int_0^T\int_{\T^d}(-mH^*(x,-w/m) -F(x,m))\dd x\dd t - \int_{\T^d} m(T,x) g(x)\dd x \; = \; -{\mathcal B}(m,w)
\end{array}
$$
where the last inequality comes from Lemma \ref{lem:ineqmwphialpha}. By optimality of $(m,w)$ and \eqref{eq:duality} we obtain therefore
$$
\s{A}( u,\alpha)  \geq \inf_{ u\in {\mathcal K}_P} \s{A}( u),
$$
which shows equality \eqref{ineq:relax}.

To prove that the problem in the right-hand side of \eqref{ineq:relax} has a minimum we consider a minimizing sequence $( u_n)$ for Problem \ref{pr:smooth}. We extend $u_n= g$ on $(T,T+1]\times \T^d$ and set $\alpha_n:= -\partial_t  u_n + H(x,D u_n)$ on $[0,T+1]\times \T^d$ and note that, in view of Lemma \ref{lem:esti}, there is a  subsequence, again denoted by $( u_n,\alpha_n)$, such that $(u_n)$ converges in $L^1$ to a $BV$ map $ u$, $(D u_n)$ converges weakly in $L^r$, and $(\alpha_n)$  converges in sense of measures to $\alpha$ on $[0,T+1]\times \T^d$. As $u_n\geq \psi$ on $[0,T]\times \T^d$, we also have $u\geq \psi$ $(0,T)\times \T^d$, so that $u(T^-,\cdot)\geq \psi(T,\cdot)= g$. By convexity of $H$ with respect to $p$, the pair $( u,\alpha)$ belongs to ${\mathcal K}_R$. One easily shows by standard relaxation that 
$$
\s{A} ( u,\alpha) \leq \liminf_{n\to\infty} \s{A}( u_n).
$$
Hence $( u,\alpha)$ is a minimum. 

Let us finally check that $\a= \a\mres (0,T)\times \T^d + (u(T^-,\cdot)-g)\dd\left( \delta_T\otimes {\mathcal H}^d\mres\T^d\right)$. Indeed, by definition of ${\mathcal K}_R$, we can extend $(u,\alpha)$ by setting $(u,\a):= (g, H(\cdot,Dg))$ on $(T,T+1)\times \T^d$ and the following inequality holds in the sense of measure in $(0,T+1)\times \T^d$:  
$$
-\partial_t u + H(x,Du)\leq \alpha
$$
Let $\phi\in C^\infty(\T^d)$, with $\phi\geq 0$, be a test function.  We multiply the above inequality by $\phi$ and integrate on $(T-\eta,T+\eta)\times \T^d$ to get
$$
\int_{\T^d} \phi(x) u((T-\eta)^+,x)\dd x + \int_{T-\eta}^{T+\eta}\int_{\T^d}\phi H(x,Du)\dd x\dd t\leq \int_{\T^d} g\phi\dd x+  \int_{T-\eta}^{T+\eta}\int_{\T^d}\phi \dd\alpha. 
$$
Letting $\eta\to 0$ along a suitable sequence such that $u((T-\eta)^+,\cdot)\to u(T^-,\cdot)$ in $L^1$, we obtain: 
$$
\int_{\T^d} \phi u(T^-,x)\dd x \leq \int_{\T^d} g\phi\dd x+  \int_{\T^d}\phi \dd(\alpha\mres\{T\}\times \T^d). 
$$
This means that $u(T^-,\cdot)\leq g+ \alpha\mres\{T\}\times \T^d$. Let us now replace $\alpha$ by $$\tilde \alpha:= \alpha\mres (0,T)\times \T^d + (u(T^-,\cdot)-g)\dd\left(\delta_T\otimes {\mathcal H}^d\mres\T^d\right).$$ 
We claim that the pair $(u,\tilde \alpha)$ still belongs to ${\mathcal K}_R$. For this we just have to check that, if we extend $(u,\tilde \a)$ to $(0,T+1)\times \T^d$ as before, then 
$$
-\partial_t u + H(x,Du)\leq \tilde \alpha\; {\rm{on}} \; (0,T+1)\times \T^d 
$$
holds in the sense of distributions.
Let $\phi\in C^\infty_c((0,T+1)\times \T^d)$ with $\phi\geq 0$. Then
\begin{align*}
\ds\int_0^{T+1}\int_{\T^d}& u\partial_t \phi + \phi H(x,Du)\dd x\dd t= \\ 
&=  \int_0^T \int_{\T^d} u\partial_t \phi + \phi H(x,Du)\dd x\dd t
+ \int_T^{T+1}\int_{\T^d} g \partial_t \phi+  \phi H(x,Dg)\dd x\dd t \\
& \leq  \int_0^T \int_{\T^d} \phi\dd(\a \mres (0,T)\times \T^d)  + \int_{\T^d} (u(T^-,x)-g) \phi(T,x)\dd x+ \int_T^{T+1}\int_{\T^d}
\phi H(x,Dg)\dd x\dd t\\
& \leq \ds  \ds \int_0^{T+1} \int_{\T^d} \phi\dd\tilde \a .
\end{align*}
This proves that  the pair $(u,\tilde \alpha)$ belongs to ${\mathcal K}_R$. In particular, ${\mathcal A}(u,\alpha)\leq {\mathcal A}(u,\tilde \a)$, so that
$$
\ov m \a^s(\{T\}\times \T^d) \leq \ov m  \int_{\T^d} u(T^-,x)-g(x)\dd x.
$$
Since we have proved that $u(T^-,\cdot)\leq g+ \alpha\mres\{T\}\times \T^d$, we have therefore an equality in the above inequality, which means that $\alpha\mres\{T\}\times \T^d= (u(T^-,\cdot)- g)\dd x$.
\end{proof}

\section{The MFG system with density constraints}\label{sec:MFGdensity}

In this section, we study the existence of solutions for the MFG system with density constraints: 
\be\label{MFGdensity}
\left\{
\begin{array}{crcll}
{\rm{(i)}} & -\partial_t u + H(x,D u) &=& f(x,m)+\beta & {\rm{in}}\; (0,T)\times \T^d \\
{\rm{(ii)}}& \partial_t m - \mathrm{div} \left(mD_p H(x,D u)\right) &=& 0  &{\rm{in}}\; (0,T)\times \T^d\\

{\rm{(iii)}} & u(T,x) =  g(x)+\beta_T,&\beta_T\geq 0,&\beta_T(\overline m-m)=0& {\rm{in}}\; \T^d\\

{\rm{(iv)}} & 0\leq m, \leq \overline m & \beta\geq 0, & \beta(\overline m-m)=0& {\rm{in}}\; (0,T)\times \T^d\\

{\rm{(v)}} &  &m(0,x) = m_0(x)  &\;& {\rm{in}}\; \T^d\\

\end{array}\right.
\ee
under the assumptions on $H$, $f$, $g$ and $m_0$ stated in Section \ref{sec:Hyp}.  We also study the approximation of the solution of this system by the solution of the classical MFG system. 

\subsection{Solutions of the MFG system with density constraints}

\begin{definition}\label{def:solution} 
We say that $( u, m, \beta,\beta_T)$ is a solution to the MFG system \eqref{MFGdensity} if 
\begin{enumerate}
\item Integrability conditions:   $\beta$ is a nonnegative Radon measure on $(0,T)\times \T^d$, $\beta_T\in L^1(\T^d)$ is nonnegative,  
$ u\in BV([0,T]\times\T^d)\cap L^r([0,T]\times\T^d)$, $D u\in L^r([0,T]\times\T^d;\R^d)$, $m\in L^1([0,T]\times\T^d)$ and $0\leq m \leq \ov m$ a.e., 

\item The following inequality 
\be\label{IneqHJ}
-\partial_t  u+H(x,D u(t,x))\leq f(x,m)+\beta
\ee
holds in $(0,T)\times \T^d$ in the sense of measures,
with the boundary condition on $\T^d$
$$
g\leq u(T^-,\cdot)= g+\beta_T \; {\rm{a.e.}}
$$
Moreover,   $\beta^{ac}= 0$ a.e. in $\{m<\ov m\}$ and
\be\label{m=ovmaspp}
\limsup_{\ep\to 0} m_\ep(t,x)= \ov m\qquad \beta^s-a.e.\; {\rm{if}}\;t<T \; {\rm{and}}\; {\rm{a.e.}} \; {\rm{in}}\;  \{\beta_T>0\}\; {\rm{if}} \; t=T,
\ee
where $m_\ep$ is any standard mollification of $m$. 

\item Equality 
$$
\partial_t m-\diver(mD_pH(x,D u(t,x))= 0, \qquad m(0)=m_0
$$
holds in the sense of distribution, 

\item\label{point4} Equality 
\begin{align*}
\int_0^T\int_{\T^d} m(-H(x,D u) & +Du\cdot D_pH(x,D u) +f(x,m)+ \beta^{ac})\dd x\dd t \\ 
+ \ov m \beta^s((0,T)\times \T^d)+ \ov m \int_{\T^d}\beta_T\dd x& = \int_{\T^d} m_0(x)u(0^+,x)- m(T,x)g(x)\dd x
\end{align*}
holds. 
\end{enumerate}
\end{definition}
Let us recall that, in the above definition, $\beta^{ac}$ and $\beta^s$ denote the absolutely continuous part and the singular part of the measure $\beta$. 

Some comments on the definition are now necessary. Equality \eqref{m=ovmaspp} is a weak way of stating that $m=\ov m$ in the support of $\beta$ and $\beta_T$ respectively, while the last requirement formally says that equality $-\partial_t  u+H(x,D u(t,x))= f(x,m)+\beta$ holds.

We can state the main result of this section. 
\begin{theorem}\label{thm:sol} 
Let $( u, \alpha)\in \s{K}_R$ be a solution of the relaxed Problem \ref{pr:relaxed} and $(m,w)\in \s{K}_D$ be a solution of the dual Problem \ref{pr:dual}. Then $\alpha\geq f(\cdot,m)$ as measures, and, if we set 
$$\beta:= \alpha \mres [0,T)\times \T^d-f(\cdot, m) \dd x\dd t 
$$
 and $\beta_T:= \alpha \mres \{T\}\times \T^d$, 
the quadruplet $( u,m,\beta,\beta_T)$ is a solution of the MFG system \eqref{MFGdensity}. 

Conversely, let $( u,m,\beta,\beta_T)$ be a solution of the MFG system \eqref{MFGdensity}. Let us set 
\be\label{def.alpha}
\alpha:= f(\cdot, m) \dd x\dd t+\beta+\beta_T \dd(\d_T\otimes {\mathcal H}^{d}\mres \T^d)
\ee and
$w= -mD_pH(x, Du)$. Then the pair $( u, \alpha)$ is a solution of the relaxed problem while the pair $(m,w)$ is a solution of the dual problem. 
\end{theorem}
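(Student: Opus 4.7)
My plan is to route everything through the duality equality $\s{A}(u,\alpha)+\s{B}(m,w)=0$ provided by Proposition \ref{prop:duality} together with Proposition \ref{inf=min}, combining the Fenchel inequality for $F^*$ with the chain-rule-type inequality of Lemma \ref{lem:ineqmwphialpha} (taken at $t=0$). If I add these two inequalities, the sum I obtain is exactly $\s{A}(u,\alpha)+\s{B}(m,w)$, so duality forces equality in both inequalities simultaneously: this is the engine that produces \emph{all} the pointwise optimality conditions characterising a solution of \eqref{MFGdensity}.

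For the direct implication, let $(u,\alpha)$ and $(m,w)$ be respective minimisers. First I would note that saturating the Fenchel inequality $F^*(x,\alpha^{ac})+F(x,m)\geq m\,\alpha^{ac}$ forces $\alpha^{ac}\in\partial_m F(x,m)$ a.e.; by the definition of $F$ (extended to $+\infty$ on $(\ov m,+\infty)$), this yields $\alpha^{ac}=f(x,m)$ on $\{m<\ov m\}$ and $\alpha^{ac}\geq f(x,\ov m)$ on $\{m=\ov m\}$, hence $\alpha\geq f(\cdot,m)\,\dd x\dd t$ globally so that the definitions $\beta:=\alpha\mres[0,T)\times\T^d-f(\cdot,m)\dd x\dd t\geq 0$ and $\beta_T:=\alpha\mres\{T\}\times\T^d$ make sense and $\beta^{ac}=0$ on $\{m<\ov m\}$. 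Next, equality in Lemma \ref{lem:ineqmwphialpha} gives both $w=-mD_pH(x,Du)$ (providing item (iii) of the system, \eqref{MFGdensity}(ii) and the $H^*$/Fenchel identity $mL(x,w/m)=m\,Du\cdot D_pH-mH$ used in item (\ref{point4})) and the concentration property $\limsup_{\e\to 0}m_\e=\ov m$ at $\alpha^s$-a.e. point, which transfers to $\beta^s$ and $\beta_T$ and yields \eqref{m=ovmaspp}. The HJ inequality \eqref{IneqHJ} is inherited from membership in $\s{K}_R$ after subtracting $f(\cdot,m)$ on the open slab, and the jump relation $u(T^-,\cdot)=g+\beta_T$ comes straight from the last sentence of Proposition \ref{inf=min}. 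Finally, I would write out $\s{A}(u,\alpha)+\s{B}(m,w)=0$ using the Fenchel identity $F^*(x,\alpha^{ac})+F(x,m)=m\alpha^{ac}$ and $mL(x,w/m)=m(Du\cdot D_pH-H)$ and, after noting that $\alpha^s$ decomposes as $\beta^s+\beta_T\,d(\delta_T\otimes\cH^d\mres\T^d)$, recognise it as the equality in item (\ref{point4}).

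For the converse, starting from $(u,m,\beta,\beta_T)$ and setting $\alpha$ and $w$ as stated, I would first verify the admissibility conditions: $(u,\alpha)\in\s{K}_R$ is checked by extending $u=g$ on $(T,T+1)\times\T^d$ and observing that the distributional jump $-\partial_t u\mres\{T\}=(u(T^-)-g)\delta_T=\beta_T\,d(\delta_T\otimes\cH^d)$ accounts precisely for the $\{T\}$-part of $\alpha$, while on $(0,T)$ the inequality $-\partial_t u+H(x,Du)\leq f(x,m)+\beta=\alpha^{ac}+\beta^s$ is exactly \eqref{IneqHJ}; $(m,w)\in\s{K}_D$ follows from the continuity equation, the bounds $0\leq m\leq \ov m$, and the growth condition \textbf{(H3)} giving $|w|\lesssim m(1+|Du|^{r-1})\in L^{r'}$. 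Then I would compute $\s{A}(u,\alpha)+\s{B}(m,w)$ by the same two Fenchel identities: the first uses $\beta^{ac}(\ov m-m)=0$ to write $F^*(x,\alpha^{ac})=m\alpha^{ac}-F(x,m)$ a.e., and the second uses $w=-mD_pH$. The sum collapses exactly to the left-hand side of the equality in Definition \ref{def:solution}(\ref{point4}), hence vanishes, and duality (Proposition \ref{prop:duality}) concludes that both pairs are optimal.

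The step I expect to demand the most care is the bookkeeping at $t=T$: ensuring that the singular-in-time contribution $\beta_T$ of $\alpha$ is consistently split off (so that Lemma \ref{lem:ineqmwphialpha} applied with the extended $(u,\alpha)$ really reproduces the term $\ov m\int_{\T^d}\beta_T\,\dd x$ in item (\ref{point4})) and that the concentration statement \eqref{m=ovmaspp} at $t=T$, where the definition asks for the a.e. statement on $\{\beta_T>0\}$ rather than the $\limsup$ statement, is correctly extracted from the mollification argument in Lemma \ref{lem:ineqmwphialpha} by exploiting the $L^\infty$-weak-$\star$ continuity of $t\mapsto m(t)$ and the fact that $m(T,\cdot)$ is an honest $L^\infty$ function, so the $\limsup$ collapses to the value $m(T,x)$ itself.
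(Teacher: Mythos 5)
Your proposal is correct, and its forward direction is exactly the paper's argument: write $0=\s{A}(u,\alpha)+\s{B}(m,w)$, insert the pointwise Fenchel inequality for $F^*$ and the inequality of Lemma \ref{lem:ineqmwphialpha}, force equalities, read off $\alpha^{ac}\in\partial_m F(\cdot,m)$ (hence $\alpha\geq f(\cdot,m)$ and $\beta^{ac}=0$ on $\{m<\ov m\}$; note that at $m=0$ one also uses $\alpha^{ac}\geq 0$ to get $\alpha^{ac}=0=f(\cdot,0)$), get $w=-mD_pH(\cdot,Du)$ and \eqref{m=ovmaspp} from the equality case of the lemma, and $\beta_T=u(T^-,\cdot)-g$ from Proposition \ref{inf=min}. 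The only genuine deviation is in the converse: you compute $\s{A}(u,\alpha)+\s{B}(m,w)=0$ directly from item (4) of Definition \ref{def:solution} and the two Fenchel identities (using $\beta^{ac}(\ov m-m)=0$ and $w=-mD_pH$), and then conclude optimality of both pairs from the known value of the problem ($\inf_{\cK_R}\s{A}=\inf_{\cK_P}\s{A}=-\min_{\cK_D}\s{B}$, i.e.\ Propositions \ref{prop:duality} and \ref{inf=min}); the paper instead never re-invokes the duality value there, but compares $(u,\alpha)$ with an arbitrary competitor $(\tilde u,\tilde\alpha)\in\cK_R$ via the subgradient inequality $F^*(x,\tilde\alpha^{ac})\geq F^*(x,\alpha^{ac})+m(\tilde\alpha^{ac}-\alpha^{ac})$ combined with Lemma \ref{lem:ineqmwphialpha} (equality for the candidate, inequality for the competitor), and argues analogously for $(m,w)$. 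Both routes are valid and rest on the same admissibility checks ($(u,\alpha)\in\cK_R$ via the extension past $t=T$ with the jump $\beta_T$, and $(m,w)\in\cK_D$ with $w\in L^{r'}$): your zero-duality-gap argument is slightly shorter but leans on the attainment/value statement of Proposition \ref{prop:duality}, whereas the paper's competitor-by-competitor comparison is self-contained given the lemma and convexity.
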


The proof of this results goes along the same lines as in \cite{cargra} (Theorem 3.5). However for the sake of completeness (and because of some differences) we sketch it here.
\begin{proof} 
Let $(u,\a)\in\cK_R$ be a solution of the Problem \ref{pr:relaxed} and $(m,w)\in \cK_D$ be the solution of Problem \ref{pr:dual}. First, by the definition of Legendre-Fenchel transform we have for a.e. $(t,x)\in [0,T]\times \T^d$
\begin{equation}\label{lf:alpha}
F^*(x,\a^{ac}(t,x)) +F(x,m(t,x))-\a^{ac}(t,x)m(t,x)\ge 0.
\end{equation}
On the other hand by optimality we have that 
\begin{align*}
0&=\cA(u,\a)+\cB(m,w)\\
& = \int_0^T \int_{\bb{T}^d} F^*(x,\alpha^{ac}(t,x)) \dd x \dd t + \ov m \a^s([0,T]\times \T^d) - \int_{\bb{T}^d}  u(0^+,x)\, m_0(x)\dd x\\
& + \int_{\bb{T}^d}  g(x)m(T,x)\dd x + \int_0^T \int_{\bb{T}^d} m(t,x)H^*\left(x,-\frac{w(t,x)}{m(t,x)}\right) + F(x,m(t,x))\, \dd x \dd t\\
&\ge \int_0^T\int_{\T^d}\a^{ac}m\dd x\dd t + \ov m \a^s([0,T]\times \T^d) + \int_{\bb{T}^d}g(x)m(T,x) -  u(0^+,x)\, m_0(x)\dd x\\
& + \int_0^T \int_{\bb{T}^d} m(t,x)H^*\left(x,-\frac{w(t,x)}{m(t,x)}\right)\dd x\dd t\; \ge \;0,
\end{align*}
where we used the Lemma \ref{lem:ineqmwphialpha} for the last inequality. This means that all the inequalities in the previous lines are equalities. In particular we have an equality in \eqref{lf:alpha}, which implies 
$$\a^{ac}(t,x)\in\partial_m F(x,m(t,x)) \qquad {\rm{a.e.}}$$
As $\partial_m F(x,m(t,x))= \{f(x,m(t,x))\}$ for $0<m(t,x)<\ov m$ a.e., we have $\a^{ac}(t,x)= f(x,m(t,x))$ a.e. in $\{0<m< \ov m\}$. Moreover, as $\partial_m F(x,0)=(-\infty,0]$ and $\alpha^{ac}\geq 0$, we also have $\a^{ac}=0= f(\cdot,0)$ a.e. in $\{m=0\}$. Finally, since   $\partial_m F(x,\ov m)=[f(x,\ov m),+\infty)$, $\a^{ac}\geq f(x,m(t,x))$ a.e. on $\{m=\ov m\}$. Therefore $\a^{ac}\geq f(\cdot,m)$ a.e. Let us set $\beta := \alpha\mres [0,T)\times \T^d-f(\cdot, m)\dd x\dd t$ and $\beta_T:=\alpha\mres \{T\}\times \T^d$. From Proposition \ref{inf=min} we know that $\beta_T= u(T^-,\cdot)-g$. 

Since equality holds in the above inequalities, there is an equality in the inequality of Lemma \ref{lem:ineqmwphialpha}: thus  point \ref{point4} holds in Definition \ref{def:solution}. Moreover, by Lemma \ref{lem:ineqmwphialpha}, we have that $w= -m D_pH(\cdot, Du)$ a.e. and \eqref{m=ovmaspp}  holds. In conclusion, since $(u,\a)\in\cK_R$ and $(m,w)\in \cK_D$, the quadruplet $(u,m,\beta,\beta_T)$ is a solution to the MFG system \eqref{MFGdensity}.

Now let us prove the converse statement. For this let us take a solution $(m,u,\beta,\beta_T)$ of the MFG system \eqref{MFGdensity} in the sense of the Definition \ref{def:solution}. Let us define $\alpha$ as in \eqref{def.alpha} and $w:= -m D_pH(\cdot,Du)$.  We shall prove that $(u,\a)$ is a solution for the Problem \ref{pr:relaxed} and $(m,w)$ is a solution of Problem \ref{pr:dual}. For the first one, one easily checks, following the argument of Proposition \ref{inf=min}, that $(u,\a)\in {\mathcal K}_R$. Let us now consider a competitor $(\tilde u,\tilde\a)\in\cK_R.$ Using the equality in Lemma \ref{lem:ineqmwphialpha} for $(u,\a,m,-mD_pH(\cdot,Du))$ and the inequality for $(\tilde u,\tilde \a,m,-mD_pH(\cdot,Du))$  we have
\begin{align*}
\cA(\tilde u,\tilde\a)&=\int_0^T\int_{\T^d} F^*(x,\tilde\a^{ac}(x))\dd x\dd t +\ov m\tilde\a^s([0,T]\times\T^d)-\int_{\T^d}\tilde u(0^+,x)m_0(x)\dd x\\
&\ge \int_0^T\int_{\T^d} F^*(x,\a^{ac}(x)) + m(\tilde\a^{ac}-\a^{ac})\dd x\dd t +\ov m\tilde\a^s([0,T]\times\T^d)-\int_{\T^d}\tilde u(0^+,x)m_0(x)\dd x\\
&\ge\int_0^T\int_{\T^d} F^*(x,\a^{ac}(x))\dd x\dd t + \ov m\a^s([0,T]\times\T^d) - \int_{\T^d} u(0^+,x)m_0(x)\dd x,
\end{align*}
thus $(u,\a)$ is a minimizer for the Problem \ref{pr:relaxed}.
 
In a similar manner we can show that $(m,w)$ is a solution for Problem \ref{pr:dual}. Hence the statement of the theorem follows.
\end{proof}

We now briefly discuss the issue of the approximation of any solution to the MFG system with density constraints. If $F=F(x,m)$ is strictly convex on $[0,\ov m]$ with respect to the $m$ variable, then, as $H^*=H^*(x,q)$ is strictly convex with respect to $q$ (because $H=H(x,p)$ is $C^1$ in $p$), we can conclude that the dual Problem \ref{pr:dual} has a unique minimizer. In particular, in this case, the $m$ component of the MFG system \eqref{MFGdensity} is unique. We do not expect uniqueness of the $u$ component: this is not the case in the ``classical setting", i.e., without density constraint (see, however, the discussion in \cite{carda}). For this reason, the fact that one can approximate {\it any} solution of  the MFG system \eqref{MFGdensity} by regular maps with suitable property is not straightforward. This is the aim of the next lemma, needed in the sequel, where we explain that the $\beta$ component of any solution can be approached by a minimizing sequence of Lipschitz maps with some optimality property. 

\begin{lemma}\label{lem.approx} Let $(u, m, \beta,\beta_T)$ be a solution to the MFG system \eqref{MFGdensity}. Then there exist Lipschitz continuous maps $(u_n, \alpha_n)$ such that 
\begin{itemize}
\item[(i)] $u_n$ satisfies a.e. and in the viscosity sense,
$$
-\partial_t u_n +H(x,Du_n) = \alpha_n \; \rm{in}\; (0,T)\times \T^d,
$$
\item[(ii)] the pair $(u_n,\alpha_n)$ is a minimizing sequence for Problem \ref{pr:smooth} and Problem \ref{pr:relaxed}, 
\item[(iii)] $(u_n)$ is bounded from below, is bounded in $BV([0,T]\times\T^d)\cap L^r([0,T]\times\T^d)$ and $(D u_n)$ is bounded in $L^r([0,T]\times\T^d)$, 
\item[(iv)] $(u_n)$ converges to some $\tilde u\geq u$ in $L^1((0,T)\times \T^d)$ with $\tilde u=u$ $m-$a.e.,
\item[(v)]  $(\alpha_n)$ is bounded in $L^1((0,T)\times \T^d)$ and converges in measure on $[0,T]\times \T^d$ to $\alpha$ defined from $(\beta,\beta_T)$ by \eqref{def.alpha},
\item[(vi)]  $(\tilde u, m,\beta,\beta_T)$ is a solution to the MFG system \eqref{MFGdensity}, 
\end{itemize}
\end{lemma}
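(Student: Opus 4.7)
The natural approach is to build $u_n$ as a value function of an optimal control problem with smoothed running cost $\alpha_n$, where $\alpha$ is the measure attached to the given solution via \eqref{def.alpha}. First, extend $(u,\alpha)$ from $[0,T]\times\T^d$ to $[0,T+1]\times\T^d$ as in the definition of $\cK_R$, by setting $u:=g$ and $\alpha:=H(\cdot,Dg)\,\dd x\dd t$ on $(T,T+1)$; by Theorem \ref{thm:sol}, the extended pair $(u,\alpha)$ is a minimizer of the relaxed Problem \ref{pr:relaxed}. Fix a standard space-time mollifier $\rho_{\e_n}$ with $\e_n\to 0$ and set $\alpha_n:=(\alpha*\rho_{\e_n})\mres[0,T]\times\T^d\in C^\infty$, which is nonnegative, satisfies $\|\alpha_n\|_{L^1}\le\|\alpha\|_{\fM([0,T+1]\times\T^d)}$, and converges narrowly to $\alpha$, giving (v). Define
\[
u_n(t,x):=\inf_{\gamma(t)=x}\int_t^T\bigl[L(\gamma(s),\dot\gamma(s))+\alpha_n(s,\gamma(s))\bigr]\dd s+g(\gamma(T)).
\]
Classical optimal control theory yields that $u_n$ is Lipschitz and is the unique viscosity/a.e.\ solution of $-\partial_tu_n+H(x,Du_n)=\alpha_n$ with $u_n(T,\cdot)=g$, establishing (i). By comparison with the map $\psi$ of Lemma \ref{lem:esti} one has $u_n\ge\psi$; the remaining bounds in (iii) follow by running the estimates of Lemma \ref{lem:esti} verbatim, using the uniform $L^1$-bound on $\alpha_n$ in place of the minimizing-sequence hypothesis. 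Compactness then provides a subsequence $u_n\to\tilde u$ in $L^1$ with $Du_n\weak D\tilde u$ in $L^r$.

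The minimizing property (ii) and the inequality $\tilde u\ge u$ in (iv) are coupled. A direct computation gives $\cA(u_n,\alpha_n)=\int F^*(x,\alpha_n)\dd x\dd t-\int u_n(0,x)\dd m_0$. Because $F^*(x,\cdot)$ has linear asymptote $\ov m s$ at $+\infty$ (cf.~\eqref{remF*}), the classical recession/relaxation formula for convex functionals of nonnegative measures (upper bound via Jensen's inequality applied to the mollification together with spatial continuity of $F^*$; lower bound via standard lower semicontinuity) yields
\[
\lim_n\int_0^T\int_{\T^d}F^*(x,\alpha_n)\dd x\dd t=\int_0^T\int_{\T^d}F^*(x,\alpha^{ac})\dd x\dd t+\ov m\,\alpha^s([0,T]\times\T^d).
\]
Combined with $u_n(0,\cdot)\to\tilde u(0^+,\cdot)$ in $L^1(\T^d)$ (by the $BV$-trace theorem), one obtains $\cA(u_n,\alpha_n)\to\cA(\tilde u,\alpha)$. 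To prove $\tilde u\ge u$, I mollify $u$ itself on the extended domain $[0,T+1]\times\T^d$ with the same kernel $\rho_{\e_n}$: by convexity of $H(x,\cdot)$ and continuity of $D_pH$ in $x$, the smoothed function $u^{\e_n}:=u*\rho_{\e_n}$ satisfies $-\partial_tu^{\e_n}+H(x,Du^{\e_n})\le\alpha*\rho_{\e_n}+o_n(1)=\alpha_n+o_n(1)$ pointwise on $[\e_n,T]\times\T^d$. Symmetric space-time mollification around $t=T$ matches the Dirac $\beta_T\,\d_T\otimes\mathcal H^d$ inside $\alpha$ with the jump of the extended $u$ across $T$, so $u^{\e_n}(T,\cdot)\le g+o_n(1)$; the viscosity comparison principle then gives $u^{\e_n}\le u_n+o_n(1)$ on $[0,T]\times\T^d$, and passing to the limit yields $u\le\tilde u$ a.e. Consequently $\cA(\tilde u,\alpha)\le\cA(u,\alpha)=\min\cA$, all three quantities coincide, and $(u_n,\alpha_n)$ is minimizing for Problems \ref{pr:smooth} and \ref{pr:relaxed} simultaneously, proving (ii).

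It remains to establish $\tilde u=u$ $m$-a.e.\ and (vi). Since both $(u,\alpha)$ and $(\tilde u,\alpha)$ are minimizers of Problem \ref{pr:relaxed}, equality holds in Lemma \ref{lem:ineqmwphialpha} for both, applied with the optimal pair $(m,w)$ (with $w=-mD_pH(\cdot,Du)$ from Definition \ref{def:solution}) at every intermediate $t\in[0,T]$; subtracting the two equalities, the right-hand sides coincide except for the term involving the trace of $u$ (resp.\ $\tilde u$), so that
\[
\int_{\T^d}m(t,x)\bigl(\tilde u(t^+,x)-u(t^+,x)\bigr)\dd x=0\qquad\forall t\in[0,T].
\]
Combined with $\tilde u\ge u$ and $m\ge 0$, this forces $\tilde u=u$ $m$-a.e., completing (iv). For (vi), the HJ inequality $-\partial_t\tilde u+H(x,D\tilde u)\le f(\cdot,m)+\beta$ on $(0,T)$ is obtained by distributional passage to the limit in $-\partial_tu_n+H(x,Du_n)=\alpha_n$ (using weak-$L^r$ convergence of $Du_n$ and convexity of $H$); the boundary condition $\tilde u(T^-,\cdot)=g+\beta_T$ follows from examining the $BV$-trace of $\tilde u$ at $T$, which picks up exactly the jump coming from the smoothed Dirac contribution in $\alpha_n$ near $t=T$. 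The continuity equation is preserved because equality in Lemma \ref{lem:ineqmwphialpha} applied to $(\tilde u,\alpha,m,w)$ forces $w=-mD_pH(\cdot,D\tilde u)$, hence $mD_pH(\cdot,D\tilde u)=mD_pH(\cdot,Du)$; finally, the integral equality in point \ref{point4} of Definition \ref{def:solution} involves $u$ only through integrals against $m$ and $m_0$, and therefore coincides with its counterpart for $\tilde u$. The principal obstacle of the whole argument is the comparison step $\tilde u\ge u$: the singular component of $\alpha$ at $t=T$ obstructs a naive viscosity comparison, and the fix—matching the symmetric mollification of $u$ across $T$ with that of the Dirac $\beta_T$ on the extended interval $[0,T+1]$—is the most delicate ingredient.
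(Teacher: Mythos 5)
Your overall strategy is close in spirit to the paper's (mollify $\alpha$, produce a Lipschitz $u_n$ solving the HJ equation with right-hand side $\alpha_n$, use the two-minimizer equality in Lemma \ref{lem:ineqmwphialpha} to get $\tilde u=u$ $m$-a.e.), but there is a genuine gap at the terminal time which your construction does not handle, and which is precisely why the paper first performs the time shift $u^\eta(t,x)=u(t+\eta,x)$, $\a^\eta=\tau_\eta\sharp\a$ before mollifying with radius $\ep<\eta/2$. The measure $\alpha$ from \eqref{def.alpha} carries the singular part $\beta_T\,\dd(\delta_T\otimes\mathcal H^d\mres\T^d)$ sitting exactly at $t=T$. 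If you convolve with a symmetric space-time kernel and then restrict to $[0,T]\times\T^d$, only (about) half of this mass survives: $\alpha_n$ converges to $\alpha\mres[0,T)\times\T^d+\tfrac12\beta_T\,\delta_T\otimes\mathcal H^d$, not to $\alpha$, so (v) fails as stated; correspondingly $\lim_n\int F^*(x,\alpha_n)\dd x\dd t$ equals $\int F^*(x,\alpha^{ac})+\ov m\,\alpha^s((0,T)\times\T^d)+\tfrac{\ov m}{2}\int\beta_T$, which is strictly below the value you claim whenever $\beta_T\not\equiv0$, and the resulting pair need not be minimizing (the whole point of $\beta_T$ is that halving it changes the terminal cost seen by the agents, hence the induced flow). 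The same halving breaks your comparison step: with $u$ extended by $g$ past $T$ and $u(T^-,\cdot)=g+\beta_T$, one has $u^{\e_n}(T,\cdot)\approx g+\tfrac12\beta_T$, not $g+o_n(1)$ (and $\beta_T$ is only $L^1$, so there is no uniform smallness to absorb); the viscosity comparison with $u_n(T,\cdot)=g$ then does not give $u^{\e_n}\le u_n+o_n(1)$, and in fact $\tilde u\ge u$ is doubtful near $t=T$ for your $u_n$. The repair is exactly the paper's device: shift $\alpha$ (and $u$) backward in time by $\eta_n$ so the Dirac lands strictly inside $(0,T)$, or equivalently use a one-sided-in-time mollifier, before restricting; with this modification your value-function definition of $u_n$ and the comparison argument do go through, and they are essentially the paper's $\tilde u_n$.

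A second, smaller gap: you invoke a ``BV-trace theorem'' to get $u_n(0,\cdot)\to\tilde u(0^+,\cdot)$ in $L^1(\T^d)$, but $L^1$ (or weak-$\star$ BV) convergence of $u_n$ does not imply convergence of the initial traces. The paper avoids needing this: the value $\s{A}(u^{\eta_n,\ep_n},\a^{\eta_n,\ep_n})$ involves $u\star\rho_\ep(\eta_n,\cdot)$, whose convergence to $u(0^+,\cdot)$ is arranged by choosing $\eta_n$ along a good sequence, and the passage to the limit $(\tilde u,\alpha)$ uses only the lower-semicontinuity (relaxation) inequality as in Proposition \ref{inf=min}, never a two-sided trace convergence. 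Your points (i) and (iii) are fine (the Lemma \ref{lem:esti} estimates indeed only need the $L^1$ bound on $\alpha_n$ and the lower bound $u_n\ge\psi$), and your treatment of $\tilde u=u$ $m$-a.e.\ via equality in Lemma \ref{lem:ineqmwphialpha} is the paper's argument; but (ii), (iv), (v), (vi) all hinge on the terminal-Dirac issue above and are not established as written.
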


\begin{proof} Let us define $\alpha$ as in \eqref{def.alpha} and recall that, by Theorem \ref{thm:sol},  $(u,\a)$ is a minimum in the relaxed Problem \ref{pr:relaxed}. In particular, $(u,\a)$ belongs to ${\mathcal K}_R$ , which means that, if we extend $(u,\a)$ by $(g,H(\cdot, Dg(\cdot)))$ in $[T,T+1]\times \T^d$, then 
$$
-\partial_t u +H(x,Du)\leq \alpha \; \rm{in}\; (0,T+1)\times \T^d.
$$
For $\eta\in (0,1)$ we set $u^\eta(t,x):=u(t+\eta, x)$ and $\a^\eta:=\tau_\eta\sharp \a$, where $\tau_\eta:[0,T+1)\times \T^d\to [-\eta, T+1-\eta)\times \T^d$ is the time shift $\tau_\eta(t,x)= (t-\eta, x)$. We then smoothen $u^\eta$ into  $u^\eta\star \rho_\ep$ where $\ep\in(0,\eta/2)$, $\rho$ is a standard even  mollifier supported in the unit ball of $\R^{d+1}$ and $\rho_\ep(\cdot):= \ep^{-d-1}\rho(\cdot/\ep)$. We note that $u^\eta\star \rho_\ep(t,x)$ for $t\in [T-\ep,T]$ is a mollified version of $g$. We finally slightly modify $u^\eta\star \rho_\ep$ so that it satisfies the boundary condition: let $\zeta:\R\to[0,1]$ be smooth, increasing, with $\zeta(s)=0$ for $s\leq -1$ and $\zeta(s)=1$ for $s\geq 0$. Set $\zeta_\ep(s)=\zeta(\ep^{-1}s)$, 
$u^{\eta,\ep}(t,x):= (1-\zeta_\ep(t-T))u^\eta\star \rho_\ep(t,x)+\zeta_\ep(t-T)g(x)$. 
Then $u^{\eta,\ep}(T,x)=g(x)$ and
$$
-\partial_t u^{\eta,\ep} +H(x,Du^{\eta,\ep})\leq \alpha^{\eta,\ep} \; \rm{in}\; (0,T+1)\times \T^d,
$$
where 
$$
\alpha^{\eta,\ep}:= \left[(1-\zeta_\e(t-T))\left(\a^\eta- H(\cdot, Du^\eta)\right)\star\rho_\ep+ H(x,Du^{\eta,\ep})-\zeta_\ep'(t-T)(g(x)-g\star \rho_\ep(t,x))\right]_+
$$
(observe that the last convolution is done in $(d+1)$ variables, even if $g$ is a function only depending on $x$). As $\ep\to0$, $u^{\eta,\ep}$ is bounded in BV and converges to $u^\eta$ in $L^1$ while $\a^{\eta,\ep}$ is nonnegative, bounded in $L^1$ and converges to $\a^\eta$ as a measure. We have 
$$
\begin{array}{rl}
\ds {\mathcal A}(u^{\eta,\ep},\a^{\eta,\ep}) \;  = & 
\ds \int_0^T\int_{\T^d} F^*\left(x,\a^{\eta,\ep}(t,x) \right)\dd x\dd t
-\int_{\T^d} u\star \rho_\ep(\eta,x)m_0(x) \dd x. 
\end{array}
$$
As $\ep\to0$, the first integral in the right-hand side converges to
\begin{multline*}
\int_0^T \int_{\bb{T}^d} F^*(x,(\alpha^{\eta})^{ac}(t,x)) \dd x \dd t + \ov m (\alpha^{\eta})^{s}((0,T]\times \T^d) \\
= \int_0^T \int_{\bb{T}^d} F^*(x,\alpha^{ac}(t+\eta,x)) \dd x \dd t + \ov m \a^s([\eta,T+\eta]\times \T^d).
\end{multline*}
This convergence is technical, but not difficult: without explicit dependence on $x$ this is just the combination of the l.s.c. behavior of this integral functional with the fact that convex functionals which are invariant by translation decrease by convolution; in the $x$-dependent case, one just needs to estimate the error using the regularity in $x$.

Pick now a sequence $(\eta_n)$ tending to 0, such that $u\star \rho_\ep(\eta_n,\cdot)$ converges in $L^1$ to $u(\eta_n,\cdot)$ as $\ep\to 0$ (this is the case for a.e. $\eta$) and $(u(\eta_n,\cdot))$ tends in $L^1$ to $u(0^+,\cdot)$ as $n\to+\infty$: then 
$$
\limsup_n \limsup_{\ep\to0} {\mathcal A}(u^{\eta_n,\ep}) \leq \limsup_n {\mathcal A}(u^{\eta_n}, \a^{\eta_n})= {\mathcal A}(u,\alpha).
$$
As $(u,\a)$ is a minimum in the relaxed Problem \ref{pr:relaxed}, we can find $\ep_n\to 0$ such that $(u^{\eta_n,\ep_n},\a^{\eta_n,\ep_n})$ is a minimizing sequence for Problem \ref{pr:relaxed} thanks to Proposition \ref{inf=min}. 

Let now $\tilde u_n$ be the viscosity solution to 
$$
\left\{\begin{array}{l}
-\partial_t u +H(x,Du)= \alpha^{\ep_n,\eta_n} \; \rm{in}\; (0,T)\times \T^d\\
u(T,x)= g(x)  \; \rm{in}\;  \T^d.
\end{array}\right.
$$
Standard results on viscosity solutions imply that $\tilde u_n$ is Lipschitz continuous  (because so are $\alpha^{\ep_n,\eta_n}$ and $g$), satisfies the equation a.e. and, by comparison, is such that  $\tilde u_n\geq u^{\eta_n,\ep_n}$. Therefore 
$$
{\mathcal A}(\tilde u_n,\alpha^{\ep_n,\eta_n})\leq  {\mathcal A}(u^{\eta_n,\ep_n},\a^{\eta_n,\ep_n}),
$$
so that $(\tilde u_n,\alpha^{\ep_n,\eta_n})$ is also a minimizing sequence  for Problem \ref{pr:relaxed}. By Lemma \ref{lem:esti}, $(\tilde u_n)$ is bounded from below, is bounded in $BV([0,T]\times\T^d)\cap L^r([0,T]\times\T^d)$ and $(D u_n)$ is bounded in $L^r([0,T]\times\T^d)$. Up to a subsequence, $(\tilde u_n)$ converges to a BV map $\tilde u$ in $L^1$ such that $\tilde u\geq u$. Note that, as in the proof of Proposition \ref{inf=min}, $(\tilde u, \alpha)$ is also a minimizer of Problem \ref{pr:relaxed}, so that, by Theorem \ref{thm:sol} , $(\tilde u, m,\beta,\beta_T)$ is also a solution to the MFG system \eqref{MFGdensity}. 
In particular, by (4) in the definition of solution, the inequalities of Lemma \ref{lem:ineqmwphialpha} must be equalities for $(\tilde u, \alpha)$ and $(u,\alpha)$ so that, 
$$
\int_{\T^d} m(t,x)u(t,x)\dd x =\int_{\T^d} m(t,x)\tilde u(t,x)\dd x \qquad \mbox{for a.e. $t\in [0,T]$.}
$$
 As $\tilde u\geq u$, this implies that $\tilde u=u$ $m-$a.e. In conclusion the pair $(\tilde u_n, \a^{\eta_n,\ep_n})$ satisfies our requirements. 
\end{proof}

\begin{remark}[About the uniqueness of $\beta$ in the solution of \eqref{MFGdensity}] Assuming that $f$ is increasing in its second variable,  we already know the uniqueness of $(m,w)$ (see Remark \ref{uniquemw}). This also gives the uniqueness of the density of $w$ w.r.t. $m$, i.e. of $D_pH(x,Du)$, on $\{m>0\}$. Supposing $H$ strictly onex, this also gives uniqueness of $Du$ a.e. on $\{m>0\}$. But if we formally differentiate equation (i) in \eqref{MFGdensity} we obtain only terms depending on $Du$, hence also $D\beta$ is unique on $\{m>0\}$. Using the BV regularity result of section 6 and the fact that $\beta$ vanishes on the non-negligible set $\{m<\overline m\}$ we infer uniqueness of $\beta$.  \end{remark}

\subsection{Approximation by classical MFG systems}

We now study to what extent the solution of the MFG system with density constraint introduced above can be obtained as the limit of the solutions of classical MFG systems. For this, we assume that $f^\ep:\T^d\times [0,+\infty)\to \R$ is a continuous function for each $\ep>0$, strictly increasing with respect to $m$, with $f^\ep(\cdot,0)=0$, and which fulfills the growth condition: there exists $\theta>1+d/r$ and $C, \ C_\ep>0$ such that 
$$
C^{-1} m^{\theta-1} -C \leq f^\ep(x,m)\leq C_\ep m^{\theta-1}+C_\ep.
$$
We consider $(u^\ep,m^\ep)$ the solution to the classical MFG system
\be\label{MFG1}
\left\{
\begin{array}{crcll}
{\rm{(i)}} & -\partial_t u^\ep + H(x,D u^\ep) &=& f^\ep(x,m^\ep) &  {\rm{in}}\; (0,T)\times \T^d \\
{\rm{(ii)}}& \partial_t m^\ep - \mathrm{div} \left(m^\ep D_p H(x,D u^\ep)\right) &=& 0  &{\rm{in}}\; (0,T)\times \T^d\\
{\rm{(iii)}} & u^\ep(T,x) =  g(x),&\;&  m^\ep(0,x) = m_0(x)  & {\rm{in}}\; \T^d\\
\end{array}\right.
\ee
 Following Cardaliaguet \cite{carda}, Cardaliaguet-Graber \cite{cargra}, we know that the MFG system \eqref{MFG1} has a unique (weak) solution $(u^\e,m^\ep)$:  namely, $(u^\ep,m^\ep)\in C^0([0,T]\times\T^d) \times L^\theta([0,T]\times\T^d)$ and the following hold: 
\begin{itemize}
\item[(i)] the following integrability conditions hold:
$$\ds Du^\ep\in L^r, \; \ds m^\ep H^*(\cdot, D_pH(\cdot,Du^\ep))\in L^1 
\quad{\rm{and}}\quad m^\ep D_pH(\cdot,Du^\ep))\in L^1 .
$$ 

\item[(ii)] Equation \eqref{MFG1}-(i) holds in the following sense: the inequality
\be\label{eq:distrib}
\ds \quad -\partial_t u^\ep+H(x,Du^\ep)\leq  f(x,m^\ep) \quad {\rm{in}}\; (0,T)\times \T^d, 
\ee
holds in the sense of distributions, with $u^\ep(T,\cdot)= g$,   

\item[(iii)] Equation \eqref{MFG1}-(ii) holds:  
\be\label{eqcontdef}
\ds \quad \partial_t m^\ep-{\rm{div}}(m^\ep D_pH(x,Du^\ep))= 0\  {\rm{in}}\; (0,T)\times \T^d, \quad m^\ep(0)=m_0
\ee
in the sense of distributions,

\item[(iv)] The following equality holds: 
\be\label{defcondsup} \begin{array}{l}
\ds \int_0^T\int_{\T^d} m^\ep(t,x)\left(f(x,m^\ep(t,x))+ H^*(x, D_pH(x,Du^\ep)(t,x)) \right)\dd x\dd t\\
\ds\qquad \qquad \qquad \qquad \qquad \qquad \qquad + \int_{\T^d} m^\ep(T,x)g(x)-m_0(x)u^\ep(0,x)\dd x=0. 
\end{array}\ee 
\end{itemize}
In addition, $u^\ep$ is Hölder continuous and in $W^{1,1}$, and  equality 
$\ds -\partial_t u^\ep+H(x,Du^\ep)=  f(x,m^\ep)$ holds a.e. in $(0,T)\times \T^d$ (see Cardaliaguet-Porretta-Tonon \cite{carporton}). 

Let us now suppose that $f^\ep(x,m)\to f(x,m)$ uniformly with respect to $x$ for any $m\leq \ov m$ and $f^\ep(x,m)\to +\infty$ uniformly with respect to $x$ for any $m> \ov m$ as $\ep\to 0^+$. 

\begin{proposition} Under the above assumptions, 
\begin{enumerate}
\item the family $(u^\ep)$ is bounded in $BV([0,T]\times\T^d)\cap L^r([0,T]\times\T^d)$ while $(D u^\ep)$ is bounded in $L^r([0,T]\times\T^d)$, the family $(\alpha^\ep:=-\partial_tu^\ep+H(\cdot, Du^\ep))$ is bounded in $L^1([0,T]\times\T^d)$, with $\alpha^\ep\geq 0$ a.e.,  $(m^\ep)$ is bounded in $L^\theta([0,T]\times \T^d)$ while $(w^\ep)$ is bounded in $L^{r'}([0,T]\times \T^d)$. 

\item If $(u,m,\a)$ is any cluster point for the weak convergence of $(u^\ep,m^\ep,\a^\ep)$, then $\a\geq f(\cdot, m)$ and, if we set $\beta:= \alpha\mres (0,T)\times \T^d$ and $\beta_T:= u(T^-,\cdot)-g$, then the quadruplet $(u,m, \beta,\beta_T)$ is a solution of the MFG system with density constraint \eqref{MFGdensity}. 
\end{enumerate} 
\end{proposition}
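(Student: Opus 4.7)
My plan is to run both approximations simultaneously via the duality $\mathcal{A}^\ep(u^\ep) = -\mathcal{B}^\ep(m^\ep, w^\ep)$, extract limits, and identify them as minimizers of the relaxed primal and dual problems; the conclusion will then follow from Theorem \ref{thm:sol}.

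\textbf{Step 1 (uniform estimates).} To bound the dual side, I would test $\mathcal{B}^\ep$ against a fixed competitor $(\tilde m,\tilde w)\in\mathcal{K}_D$ with $\tilde m\leq \overline m-\overline c/2$ built from $m_0$ (say, $\tilde m \equiv m_0\star\rho$ together with a convex interpolation): on the range $[0,\overline m]$, $f^\ep\to f$ uniformly, so $F^\ep(x,\tilde m)\leq C$ uniformly in $\ep$, and optimality gives $\mathcal{B}^\ep(m^\ep,w^\ep)\leq C$. Combined with the lower bound $F^\ep(x,m)\geq C^{-1}m^\theta-C$ and the coercivity of $H^*$, this yields $(m^\ep)$ bounded in $L^\theta$ and $(w^\ep)$ bounded in $L^{r'}$. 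On the primal side, by duality $\mathcal{A}^\ep(u^\ep)=-\mathcal{B}^\ep(m^\ep,w^\ep)\leq C$, and $\alpha^\ep := f^\ep(\cdot,m^\ep)=-\partial_tu^\ep+H(\cdot,Du^\ep)\geq 0$. Noting that $F^{*,\ep}(x,\alpha)\geq \alpha\overline m-F^\ep(x,\overline m)\geq \alpha\overline m - C$ uniformly in $\ep$, I repeat the argument of Lemma \ref{lem:esti} verbatim: $u^\ep\geq \psi$ by comparison, then integrating the HJ equation gives an $L^1$ bound on $u^\ep(0,\cdot)$, hence an $L^1$ bound on $\alpha^\ep$, and after a further integration and Poincaré the $L^r$ bound on $Du^\ep$ and on $u^\ep$, whence the $BV$ bound on $u^\ep$ through $\partial_tu^\ep= -\alpha^\ep+H(\cdot,Du^\ep)$.

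\textbf{Step 2 (passage to the limit and density constraint).} Extract a subsequence with $u^\ep\to u$ in $L^1$ and a.e., $Du^\ep\rightharpoonup Du$ in $L^r$, $m^\ep\rightharpoonup m$ in $L^\theta$, $w^\ep\rightharpoonup w$ in $L^{r'}$, and $\alpha^\ep\to\alpha$ narrowly as Radon measures on $[0,T]\times\T^d$ (extending $(u^\ep,\alpha^\ep)$ to $(g,H(\cdot,Dg))$ on $(T,T+1)\times\T^d$ as in Proposition \ref{inf=min}). Weak stability of the continuity equation gives $(m,w)\in\mathcal{K}_D$. Convexity of $H$ in $p$ and lower-semicontinuity of the distributional inequality give $-\partial_tu+H(x,Du)\leq \alpha$, so $(u,\alpha)\in\mathcal{K}_R$. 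For the density constraint $m\leq\overline m$: for each $\delta>0$, the uniform convergence $f^\ep(x,m)\to+\infty$ on $\{m\geq \overline m+\delta\}$ combined with $\int f^\ep(\cdot,m^\ep)\,\dd x\,\dd t=\|\alpha^\ep\|_{L^1}\leq C$ forces $|\{m^\ep\geq\overline m+\delta\}|\to 0$, and since $m^\ep\rightharpoonup m$, one gets $m\leq\overline m$ a.e.

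\textbf{Step 3 (optimality and MFG structure).} It remains to show that $(u,\alpha)$ minimizes the relaxed Problem \ref{pr:relaxed} and $(m,w)$ minimizes Problem \ref{pr:dual}; Theorem \ref{thm:sol} will then produce the quadruplet with $\beta=\alpha\mres(0,T)\times\T^d-f(\cdot,m)\,\dd x\,\dd t$ and $\beta_T=\alpha\mres\{T\}\times\T^d$. Standard convex lower-semicontinuity under weak $L^\theta\times L^{r'}$ convergence yields $\mathcal{B}(m,w)\leq\liminf_\ep \mathcal{B}^\ep(m^\ep,w^\ep)$, and for the primal one has $\mathcal{A}(u,\alpha)\leq \liminf_\ep \mathcal{A}^\ep(u^\ep)$. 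Combining with the identity $\mathcal{A}^\ep(u^\ep)+\mathcal{B}^\ep(m^\ep,w^\ep)=0$ and the weak duality $\mathcal{A}(u,\alpha)+\mathcal{B}(m,w)\geq 0$ (Lemma \ref{lem:ineqmwphialpha} with $t=0$, which applies since $m\leq\overline m$ and $(u,\alpha)\in\mathcal{K}_R$), both inequalities are equalities, and Proposition \ref{prop:duality} together with Proposition \ref{inf=min} furnishes the optimality.

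\textbf{Main obstacle.} The delicate point is the liminf inequality for $\mathcal{A}^\ep$: the sequence $F^{*,\ep}(x,\alpha^\ep)\,\dd x\,\dd t$ must produce in the limit not only $\int F^*(x,\alpha^{ac})$ but also the recession term $\overline m\,\alpha^s([0,T]\times\T^d)$. The key point is that the Legendre transform converts the blow-up $f^\ep\to+\infty$ on $(\overline m,\infty)$ into the linear asymptote $F^{*,\ep}(x,\alpha)/\alpha\to\overline m$ as $\alpha\to\infty$, uniformly in $\ep$ small; this gives the lower bound $F^{*,\ep}(x,\alpha^\ep)\geq \alpha^\ep\overline m - C$, and a Reshetnyak-type argument on the disintegration of $\alpha$ into its absolutely continuous and singular parts (together with the pointwise convergence $F^{*,\ep}(x,\cdot)\to F^*(x,\cdot)$ on $[0,\infty)$) delivers the desired liminf, with the singular mass weighted by the recession function, which here is exactly $\alpha\mapsto \overline m \alpha$.
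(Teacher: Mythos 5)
Your proposal is correct and follows essentially the same route as the paper: use the variational characterization of $(u^\ep,m^\ep)$ as minimizers of $\mathcal{A}^\ep$ and $\mathcal{B}^\ep$ with zero duality gap, derive uniform bounds from the uniform lower bound $(F^\ep)^*(x,\alpha)\geq \alpha\,\overline m - C$ together with the argument of Lemma \ref{lem:esti}, pass to the limit by $\Gamma$-convergence/lower semicontinuity to identify the cluster points as minimizers of the relaxed and dual problems, and conclude via Theorem \ref{thm:sol}. The only cosmetic difference is that you bound the dual functional first with a fixed competitor built from $m_0$ and transfer the bound to $\mathcal{A}^\ep$ by duality, whereas the paper bounds $\mathcal{A}^\ep(u^\ep,\alpha^\ep)$ directly by testing with $(\psi,0)$, $\psi$ being the viscosity solution of $-\partial_t\psi+H(x,D\psi)=0$ with $\psi(T,\cdot)=g$; your explicit verification of $m\leq\overline m$ and your discussion of the recession term $\overline m\,\alpha^s$ in the liminf for $\mathcal{A}^\ep$ merely fill in details the paper delegates to ``standard arguments from the theory of $\Gamma$-convergence''.
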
 

\begin{proof} The proof is a straightforward adaptation of our previous constructions.  According to \cite{cargraporton}, we know that $(u^\ep,\alpha^\ep)$ is minimizer over ${\mathcal K}_R$ of the functional
$$
{\mathcal A}^\ep(u,\alpha)= \int_0^T \int_{\bb{T}^d} (F^{\ep})^*(x,\alpha)\dd x\dd t - \int_{\bb{T}^d}  u(0^+,x)\dd m_0(x),
$$
where $\ds F^\ep(x,m):= \int_0^m f^\ep(x,s)\dd s$ and $(F^{\ep})^*$ is the Fenchel conjugate of $F^\ep$ with respect to the last variable. 
Then, by convexity,  
$$
(F^{\ep})^*(x,\alpha)\geq \alpha \ov m -F^\ep(x,\ov m),
$$
where, by our assumptions, $F^\ep(x,\ov m)$ converges uniformly with respect to $x$ to $F(x,\ov m)$. Let $\psi$ be  the Lipschitz continuous viscosity  solution to 
$$
\left\{
\begin{array}{rcl}
-\partial_t \psi + H(x,D\psi) & = & 0\\
\psi(T,x) &=&  g(x).
\end{array}
\right.
$$
It is also an a.e. solution, so that $(\psi, 0)$ belongs to ${\mathcal K}_R$. Then 
$$
{\mathcal A}^\ep(u^\ep,\alpha^\ep)\leq {\mathcal A}^\ep(\psi,0)\leq - \int_{\bb{T}^d}  \psi(0,x)\dd m_0(x) \leq C.
$$
So $({\mathcal A}^\ep(u^\ep,\alpha^\ep))$ is bounded from above and one can then argue exactly as in the proof of Lemma \ref{lem:esti} to obtain the bounds on $(u^\ep)$ and $(\alpha^\ep)$ as well as a bound for $({\mathcal A}^\ep(u^\ep,\alpha^\ep))$. 

Following \cite{cargraporton}, we also know that the pair $(m^\ep,w^\ep):= (m^\ep, -m^\ep D_pH(\cdot, Du^\ep))$ is a minimizer over ${\mathcal K}_D$ of 
$$
\s{B}^\ep(m,w) = \int_{\bb{T}^d}  g(x)m(T,x)\dd x + \int_0^T \int_{\bb{T}^d} m(t,x)L\left(x,\frac{w(t,x)}{m(t,x)}\right) + F^\ep(x,m(t,x))\, \dd x \dd t
$$
Since, by \cite{cargraporton}, ${\mathcal A}^\ep(u^\ep,\a^\ep)= - {\mathcal B}^\ep (m^\ep, w^\ep)$, $({\mathcal B}^\ep (m^\ep, w^\ep))$ is bounded. 
From our assumption on $f^\ep$ we have therefore that  $(m^\ep)$ is bounded in $L^\theta([0,T]\times \T^d)$ while $(w^\ep)$ is bounded in $L^{r'}([0,T]\times \T^d)$. 

Let finally $(u,\alpha)$ be a cluster point of $(u^\ep,\a^\ep)$ and $(m,w)$ be a cluster point of $(m^\ep,w^\ep)$ for the weak convergence. Then, standard arguments from the theory of $\Gamma$-convergence show that $(u,\alpha)$ minimizes ${\mathcal A}$ while $(m,w)$ minimizes ${\mathcal B}$, so that, if we set $\beta:= \alpha \mres (0,T)\times \T^d$ and $\beta_T:= u(T^-,\cdot)-g$, the quadruplet $( u,m,\beta,\beta_T)$ is a solution of the MFG system \eqref{MFGdensity} according to Theorem \ref{thm:sol}. 
\end{proof}

\section{No coupling, space homogeneity, power-like Hamiltonians and $m_0<\ov m$}\label{sec:example}

In this section, we study through an example some finer properties of the solutions of \eqref{MFGdensity}.  Let us consider  $f(x,m)= 0,$ for all $(x,m)\in\T^d\times[0,+\infty),$ $\ds H(x,p)=\frac{1}{s}|p|^s$ ($s>1$) and $T=1.$ The terminal cost $g$ is a given smooth function. As usual, we assume that the initial density of the population satisfies $m_0<\ov m-c$ a.e. in $\T^d$ for a given constant $0<c<\ov m$ (here this assumption will be essential, while it is not clear whether for the considerations of the previous sections it is purely technical or not). For simplicity, let us set $s=2$. In this  case the functional $\cB$ for the Problem \ref{pr:dual} has the form
$$\cB(m,w)=\int_0^1\int_{\T^d}\frac12\frac{|w|^2}{m} + F(x,m)\dd x\dd t + \int_{\T^d}g(x)m(1,x)\dd x,$$
where we use the convention \eqref{conventionH*}. Let us also chose $F(x,m)\equiv 0$ for $m\in[0,\ov m]$ and $F\equiv+\infty$ otherwise. This functional recalls the one introduced by Bemamou and Brenier to give a dynamical formulation for the Monge-Kantorovich's optimal transportation problem (see \cite{bb}). Only a constraint on the density $m$ and a penalization on the final value have been added.

Indeed, forgetting for a while the density constraint, Problem \ref{pr:dual} can be reformulated as 
\begin{equation}\label{pr:bb}
\min\left\{\frac{1}{2}W_2^2(m_0,m_1)+\int_{\T^d}gm_1\dd x\; : \; m_1\in\cP(\T^d),\; m_1\le\ov m\right\}.
\end{equation}
We remark that the above formulation gives always a geodesic curve connecting $m_0$ and $m_1$ (thus $m_t$ is defined for all $t\in[0,1]$). Since the admissible set in the above problem is geodesically convex (and $\T^d$ is a convex set), the density constraint is satisfied as soon as it is satisfied at the terminal time. Hence the problem in \eqref{pr:bb} is completely equivalent to Problem \ref{pr:dual}. Actually we can prove something more: if the initial density satisfies strictly the constraint, then saturation may happen only at the final time. This result is not a straightforward consequence of geodesic convexity, and we give a complete proof of it here below. 

\begin{lemma}\label{lem:sat}
Let $m_0<\ov m-c$ (for a given constant $0<c<\ov m$) a.e. in $\T^d$ and $m_1$ be the solution of Problem \ref{pr:bb}. Let $(m_t)$ be the  geodesic connecting $m_0$ to $m_1$. Then, for any $t\in (0,1)$ we have $\|m_t\|_{L^\infty}\leq \ov m \frac{\l}{(1-t)+t\lambda^{1/d})^d}$, where $\l:=(\ov m-c)/\ov m<1$ (note $\frac{\l}{(1-t)+t\lambda^{1/d})^d}<1$ for $t<1$).\end{lemma}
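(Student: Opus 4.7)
The plan is to exploit that the Benamou-Brenier geodesic coincides with McCann's displacement interpolation, and then to read off pointwise bounds on $m_t$ from the Monge-Amp\`ere equation together with the concavity of $\det^{1/d}$ on symmetric nonnegative matrices.

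First, since $m_0 \ll \cL^d$ and $m_1 \leq \ov m$ is absolutely continuous, Brenier's theorem provides an optimal transport map $T = \nabla \psi$ from $m_0$ to $m_1$ with $\psi$ convex. The geodesic in Problem~\ref{pr:bb} is the displacement interpolation $m_t = (T_t)_\# m_0$ with $T_t(x) := (1-t)x + t T(x)$; because $T_t = \nabla \psi_t$ for $\psi_t(x) := \tfrac{1-t}{2}|x|^2 + t\psi(x)$ convex, $T_t$ is itself a Brenier map and is $m_0$-a.e. injective with a well-defined Alexandrov Hessian $DT_t = (1-t)I + tD^2\psi \succeq 0$.

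Next, I would apply the Monge-Amp\`ere equation at $m_0$-a.e.\ $x$:
\[
m_0(x) = m_t(T_t(x)) \det(DT_t(x)), \qquad m_0(x) = m_1(T(x)) \det(D^2\psi(x)).
\]
The Brunn-Minkowski inequality for determinants, $\det(A+B)^{1/d} \geq \det(A)^{1/d} + \det(B)^{1/d}$ on symmetric nonnegative matrices, applied with $A=(1-t)I$, $B = tD^2\psi(x)$ gives
\[
\det(DT_t(x))^{1/d} \geq (1-t) + t\bigl(m_0(x)/m_1(T(x))\bigr)^{1/d}.
\]
Combining, at every $y = T_t(x)$ in the image,
\[
m_t(y) \leq \frac{m_0(x)}{\bigl[(1-t) + t(m_0(x)/m_1(T(x)))^{1/d}\bigr]^d}.
\]

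Finally, I would bound the right-hand side using the hypotheses $m_0(x) \leq \lambda \ov m$ and $m_1(T(x)) \leq \ov m$. An elementary calculus check shows the map $(a,b) \mapsto a/[(1-t) + t(a/b)^{1/d}]^d$ is nondecreasing in each of $a \in [0,\lambda\ov m]$ and $b \in [0,\ov m]$ (for $b$, monotonicity is immediate since larger $b$ shrinks $(a/b)^{1/d}$; for $a$, write $s = (a/\ov m)^{1/d}$ and note that $s \mapsto s/((1-t)+ts)$ has positive derivative $(1-t)/((1-t)+ts)^2$). The maximum is thus attained at $a = \lambda \ov m$, $b = \ov m$, yielding
\[
m_t(y) \leq \frac{\lambda \ov m}{[(1-t) + t \lambda^{1/d}]^d},
\]
and since $T_t$ pushes $m_0$ onto $m_t$ and is a.e.\ one-to-one, this bound holds $m_t$-a.e., proving the claim.

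The main obstacle is technical rather than conceptual: one needs to make sure the pointwise Monge-Amp\`ere identity holds a.e.\ and that the Alexandrov second differentiability of $\psi$ is enough to justify the above chain of inequalities. These are by now classical facts (McCann, Villani, Figalli), so I expect the bulk of the proof to consist in carefully invoking them and checking the elementary monotonicity step in the last paragraph.
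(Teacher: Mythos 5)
Your proof is correct and follows essentially the same route as the paper: McCann's displacement interpolation, the a.e.\ Jacobian/Monge--Amp\`ere identity for $T_t=(1-t)\id+tT$, the concavity of $\det^{1/d}$ (Minkowski's determinant inequality), and then the bounds $m_0\le\lambda\ov m$, $m_1\le\ov m$. The only differences are cosmetic: the paper justifies the Jacobian equation via countably Lipschitz transport maps and their inverses $S_t$ rather than Alexandrov Hessians, and you make explicit the elementary monotonicity in $(a,b)$ that the paper uses implicitly in its last line.
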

\begin{proof}
As we mentioned before, since the admissible set in Problem \ref{pr:bb} is geodesically convex, we get $m_t\le\ov m$ a.e. in $\T^d$ for all $t\in[0,1].$ On the other hand, since $m_t$ is absolutely continuous for all $t\in[0,1]$ we know that there exist optimal transport maps $T^t,S^t:\T^d\to\T^d$ such that $(T^t)_\# m_0=m_t$ and $(S^t)_\# m_t=m_0$ with $T^t\circ S^t=\id.$ The maps $(S_t)$ and $(T_t)$ are given by McCann's interpolation in terms of $S^1$ and $T^1$ respectively, which is $T_t:=(1-t)\id+tT^1$ and $S_t=t\id+(1-t)S^1.$  Moreover $T_t$ and $S_t$ are countably Lipschitz (i.e. the domain can be decomposed, up to negligible sets, into a countable union of sets where these maps are Lipschitz continuous), hence we can write the Jacobian equation
$$\det(DT_t)=\frac{m_0}{m_t\circ T_t}.$$
Hence, the density $m_t$ is given by
\begin{equation}\label{density}
m_t=\frac{m_0}{\det(DT_t)}\circ S_t.
\end{equation}
Using the concavity of $\det^{1/d}$ (for positive definite matrices) we obtain that 
\begin{align*}
\det(DT_t)=\det\left((1-t)I_d+tDT^1\right)&\ge\left((1-t)+t\det(DT^1)^{1/d}\right)^d=\left((1-t)+t\left(\frac{m_0}{m_1\circ T^1}\right)^{1/d}\right)^d\\
&\ge \left((1-t)+t\left(\frac{m_0}{\ov m}\right)^{1/d}\right)^d.
\end{align*}
Hence by \eqref{density} we have that 
$$m_t\le\frac{m_0\circ S_t}{\left((1-t)+t\left(\frac{m_0\circ S_t}{\ov m}\right)^{1/d}\right)^d}.$$
Let us set $\l:=(\ov m-c)/\ov m<1.$ Then, for any $t\in (0,1)$ we have
$$m_t\le \ov m \frac{\l}{(1-t)+t\lambda^{1/d})^d},$$
and the coefficient $\frac{\l}{(1-t)+t\lambda^{1/d})^d}$ is strictly less than $1$ for every $t,\lambda<1$.
\end{proof}

\subsection{Some properties of $\b,\b_1$ and $u$}
Let us discuss now some further properties of $\b,\b_1$ and $u$.

\begin{proposition}
Let $(u,m,\beta,\beta_1)$ be a solution of the MFG system with density constraints and let us assume that we are in the setting of this section. Then $\beta\equiv 0$ and $u$ and $\beta_1$ are bounded. 
\end{proposition}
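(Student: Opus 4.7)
The plan is to prove the three claims in sequence, using Lemma~\ref{lem:sat} as the essential input.

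First, I would show $\beta \equiv 0$. Lemma~\ref{lem:sat} gives the quantitative bound $\|m_t\|_{L^\infty(\T^d)} \leq \ov m \cdot \lambda/[(1-t) + t\lambda^{1/d}]^d < \ov m$ for every $t \in (0,1)$, uniformly on every compact subset of $(0,1)$. Point~(2) of Definition~\ref{def:solution} then forces $\beta^{ac} \equiv 0$ a.e.\ on $(0,1) \times \T^d$, since $\{m < \ov m\}$ is of full Lebesgue measure there. For the singular part, any standard mollification $m_\ep$ inherits the same strict upper bound on a slightly contracted compact subset, so $\limsup_{\ep \to 0} m_\ep(t,x) < \ov m$ pointwise on every compact subset of $(0,1) \times \T^d$; condition \eqref{m=ovmaspp} then forces $\beta^s \equiv 0$ on $(0,1) \times \T^d$.

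Second, I would obtain the boundedness of $\beta_1$ via the optimal transport structure. With $\beta \equiv 0$ and $f \equiv 0$, Theorem~\ref{thm:sol} together with the Benamou--Brenier formulation discussed earlier in this section reduces the dual Problem~\ref{pr:dual} to the constrained minimization
$$\min\Bigl\{ \tfrac{1}{2} W_2^2(m_0, m_1) + \int_{\T^d} g\, m_1\,:\, m_1 \in \P(\T^d),\ m_1 \leq \ov m \Bigr\}.$$
Brenier's theorem (using $m_0 \ll \cL^d$) yields an optimal transport map $T$ together with Kantorovich potentials $(\varphi, \psi)$, and the potential $\psi$ is semi-concave, hence Lipschitz and bounded on the compact torus. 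Writing the Lagrangian of the above constrained problem with a multiplier $\lambda \in \R$ for the unit-mass constraint and $\beta_1 \geq 0$ for the density constraint, the first-order optimality condition reads $\psi + g + \beta_1 = \lambda$ on $\mathrm{supp}(m_1)$, together with the complementarity relation $\beta_1(\ov m - m_1) = 0$. Hence $\beta_1 = \lambda - \psi - g$ on $\{m_1 = \ov m\}$ and $\beta_1 = 0$ elsewhere, which is bounded. Matching this Lagrange multiplier with the MFG $\beta_1$ via the primal-dual identities of Theorem~\ref{thm:sol} and the relation $u(1^-,\cdot) = g + \beta_1$ closes this step.

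Third, for the boundedness of $u$: the terminal trace $u(1^-,\cdot) = g + \beta_1$ is bounded by the previous step. The HJ sub-solution inequality with $\beta \equiv 0$ yields $\partial_t u \geq \tfrac{1}{2}|Du|^2 \geq 0$ as Radon measures, which forces $u$ to be essentially non-decreasing in $t$ for a.e.\ $x \in \T^d$, producing the upper bound $u(t,x) \leq g(x) + \beta_1(x) \leq C$ a.e. For a lower bound, I would invoke Lemma~\ref{lem.approx} to produce a minimizing sequence of Lipschitz pairs $(u_n, \alpha_n)$ with $u_n \geq \Psi$, where $\Psi$ is the Lipschitz viscosity solution of $-\partial_t \Psi + \tfrac{1}{2}|D\Psi|^2 = 0$ with $\Psi(1,\cdot) = g$ (such a uniform lower bound appears in the proof of Lemma~\ref{lem:esti}). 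Its $L^1$-limit $\tilde u \geq u$ is again a solution of \eqref{MFGdensity} by Lemma~\ref{lem.approx}(vi), satisfies $\tilde u \geq \Psi$ a.e., and is therefore bounded; selecting $\tilde u$ as the canonical representative then gives the claim.

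The main obstacle is the rigorous identification in the second step between the Lagrange multiplier from the constrained transport problem and the MFG $\beta_1$. The strict convexity of the Benamou--Brenier kinetic functional determines $(m,w)$ and hence $m_1$ uniquely, but tracking the Kantorovich potential $\psi$ through the primal-dual identities of Theorem~\ref{thm:sol} to match it with the boundary trace $u(1^-,\cdot) - g$ requires careful bookkeeping of additive constants and of the extension of $u(1^-,\cdot)$ over $\{m_1 < \ov m\}$, where the MFG system does not uniquely pin down the solution.
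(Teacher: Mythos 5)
Your first step ($\beta\equiv 0$) is exactly the paper's argument: Lemma \ref{lem:sat} gives a bound on $m_t$ (and on its mollifications) strictly below $\ov m$ on compact subsets of $(0,1)$, and Definition \ref{def:solution} together with \eqref{m=ovmaspp} kills both $\beta^{ac}$ and $\beta^s$. The trouble is in your second step, and it is precisely the point you flag yourself as the ``main obstacle'': you bound the Lagrange multiplier of the static problem \eqref{pr:bb} (which is indeed $\lambda-\psi-g$ on $\{m_1=\ov m\}$, with $\psi$ a Lipschitz Kantorovich potential, hence bounded), but the Proposition is about the $\beta_1$ of a \emph{given} solution $(u,m,\beta,\beta_1)$, and $\beta_1=u(1^-,\cdot)-g$ is built from the terminal trace of $u$, which is not unique and is only pinned down $m$-a.e.\ by the system. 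Nothing in Theorem \ref{thm:sol} or in the duality identities identifies $u(1^-,\cdot)-g$ with the multiplier of \eqref{pr:bb}; establishing such an identification (including what happens on $\{m_1<\ov m\}$, where the static optimality conditions say nothing about $u(1^-,\cdot)$) is the actual content of the proof, not bookkeeping. Since your upper bound on $u$ in step three is $u\le g+\beta_1$ via $\partial_t u\ge 0$, the whole chain collapses onto this unproven identification. The paper proceeds in the opposite order and avoids it entirely: using $\beta\equiv0$ and Hopf's formula on mollifications of $u$ (plus the bound on $\int_{\T^d}u(t,\cdot)\dd x$) it first bounds $u$ from above by a constant $C_0$ on $(0,1/2)\times\T^d$; then it truncates, $\tilde u:=u\wedge z$ with the explicit subsolution $z(t,x)=(C_0+\|H(\cdot,0)\|_{L^\infty})\vee\|g\|_{L^\infty}-\|H(\cdot,0)\|_{L^\infty}(1-t)$, checks that $(\tilde u,\tilde\alpha)\in\cK_R$, and uses optimality of $(u,\alpha)$ in the relaxed problem to force $\tilde u(1^-,\cdot)=u(1^-,\cdot)$ a.e.; this bounds $u(1^-,\cdot)$, hence $\beta_1$, and then $u$ everywhere. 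This truncation-plus-optimality comparison is exactly the device that replaces the identification you leave open.

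A second, smaller gap: your lower bound on $u$ is not a lower bound on $u$. Lemma \ref{lem.approx} gives $u_n\ge\Psi$ and a limit $\tilde u\ge u$ with $\tilde u=u$ only $m$-a.e.; concluding that $\tilde u$ is bounded below and then ``selecting $\tilde u$ as the canonical representative'' changes the function off the support of $m$ and therefore does not prove the statement for the given $u$ (note $\tilde u\ge u$ goes the wrong way: bounding $\tilde u$ from below says nothing about $u$). The paper instead invokes the comparison argument of Lemma \ref{lem:esti} (the a.e.\ subsolution/supersolution structure with $\alpha\ge0$ and terminal datum $\ge g$) to get $u\ge\psi$ directly, and only then concludes. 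If you want to keep your route, you must either prove the lower bound for $u$ itself or prove that $\beta_1$ and the boundedness claims are insensitive to the choice of representative of the solution, neither of which is done in the proposal.
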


\begin{proof} From Theorem \ref{thm:sol}, we know that the pair $(m,-mD_pH(\cdot, Du))$ is a minimizer of ${\mathcal B}$. In view of  Lemma \ref{lem:sat}, we have therefore $m(t,x)<\ov m$ for a.e. $(t,x)\in (0,T)\times \T^d$. By Definition \ref{def:solution}, this implies that $\beta^{ac}=0$. Recall on the other hand that 
$$
\limsup_{\ep\to 0} m_\ep(t,x)= \ov m\qquad \beta^s-{\rm{a.e.}}\; {\rm{if}} \; t<1, 
$$
where $m_\ep$ is any standard mollification of $m$. But, still by Lemma \ref{lem:sat}, for any $t\in (0,1)$, we get an upper bound on  $m_\ep(t,x)$ which is strictly less than $\ov m$. Hence, $\beta^s=0$ on $[0,1)\times\T^d$.

Let us now check that $u$ is bounded. 
We note that, as $\beta=0$ and $H$ satisfies the growth condition \eqref{eq:hamiltonian_bounds}, $u$ satisfies a.e. the inequality $-\partial_t u +|Du|^2/2\leq 0$ in $(0,1)\times \T^d$. Thus, if we mollify $u$ in the usual way, $u_\ep$ is a classical sub solution to $-\partial_t u_\ep +|Du_\ep|^2/2\leq  0$ on $(\ep,1-\ep)\times \T^d$. By Hopf's formula we get therefore
$$
u_\ep(t,x)\leq \inf_{y\in \T^d} \left\{ u_\ep(1-\ep,y)+ C \frac{|x-y|^{2}}{(1-\ep-t)} +C(1-t)\right\}\qquad \forall (t,x)\in (\ep, 1-\ep)\times \T^d.
$$
Hence 
$$
u_\ep(t,x)\leq \inf_{y\in \T^d} \left\{ u_\ep(1-\ep,y)+ C\right\}\qquad \forall (t,x)\in (\ep, 1/2)\times \T^d.
$$
Recalling that $\ds \int_{\T^d} u(t,x)\dd x$ is bounded for a.e. $t$ (see the proof of Lemma \ref{lem:esti}), we also have that $\ds\int_{\T^d} u_\ep(t,x)\dd x$ is bounded as well for all $t$ and therefore 
$\ds \inf_{y\in \T^d} u_\ep(1,y)$ is bounded from above. So we have proved that $u_\ep$ is bounded from above by a constant $C_0$ on $(\ep,1/2)\times \T^d$, where $C_0$ is independent of $\ep$. This shows that $u$ is bounded from above by $C_0$ on  $(0,1/2)\times \T^d$.

Let us set $z(t,x):= (C_0+\|H(\cdot,0)\|_{L^\infty})\vee \|g\|_{L^\infty}- \|H(\cdot,0)\|_{L^\infty} (1-t)$. Then $z$ is a subsolution to $-\partial_t z+H(x,Dz)\leq 0$ which satisfies $z(1, \cdot)\geq g$ and $z(0,\cdot)\geq C_0\geq u(0,\cdot)$. Therefore the map $\tilde u(t,x):= u(t,x)\wedge z(t,x)$ is still a subsolution (because $H=H(x,p)$ is convex with respect to $p$), which satisfies $\tilde u(0,\cdot)=u(0,\cdot)$ a.e. and $g(x)\leq \tilde u(1^-,x)\leq u(1^-,x)$. Let us set $\tilde \alpha:= (\tilde u(1^-, \cdot)-g)\dd(\d_1\otimes{\mathcal H}^d\mres\T^d)$. Then the pair $(\tilde u,\tilde \alpha)$ belongs to ${\mathcal K}_R$ and by optimality of $(u,\alpha)$ we have  
$$
\begin{array}{l}
\ds {\mathcal A}(u,\alpha) = \int_{\T^d} \left(u(1^-,x)-g(x)\right)\dd x - \int_{\T^d} m_0(x) u(0,x)\dd x
\\ 
\ds \qquad \leq 
{\mathcal A}(\tilde u,\tilde \alpha) = \int_{\T^d}\left( \tilde u(1^-,x)-g(x)\right) \dd x - \int_{\T^d} m_0(x) \tilde u(0,x)\dd x.
\end{array}
$$
As $\tilde u(0,\cdot)=u(0,\cdot)$ and $\tilde u(1^-,x)\leq u(1^-,x)$, this proves that $\tilde u(1^-,x)= u(1^-,x)$ a.e., which means that $u(1^-,\cdot)$ is bounded from above. Since we already know that $u$ is bounded from below (see the proof of Lemma \ref{lem:esti}), we have established that $u(1^-,\cdot)$ is bounded. 
By Hopf's formula, this entails the boundedness of $u$ on $(0,1)\times \T^d$, from where the boundedness of $\beta_1$ follows as well.
\end{proof}

\begin{remark}[Nash-type equilibrium]
For this example a notion of Nash equilibrium can be formulated by the means of $(m,\b_1),$ i.e. by the means of the ``additional price'' $\b_1$ to be payed by the agents at the final time. This price, whose value is precisely $\b_1=(u(1^-,\cdot)-g)$, clearly has to be payed only if agents arrive to the saturated zone at the final time. Let us postpone the precise definition and the  details on the question of the Nash equilibrium, which will be established for more general cases in Section \ref{sec:nash} (see Definition \ref{def:nash}).
\end{remark}

\section{Regularity of the ``additional price'' $\b$}\label{sec:regularity}

In this section we show, under some additional regularity assumption on the data, that the measure $\beta$ is absolutely continuous and belongs to $L^{d/(d-1)}_{\rm{loc}}((0,T)\times\T^d)$.  In this respect, our model recalls those studied by Brenier (see \cite{br}) and later by Ambrosio-Figalli (see \cite{af, af2}), where they analyzed the motion of incompressible perfect fluids driven by the Euler's equations.

We will see in the next section that this regularity is essential in order to define Nash equilibria in our context. For this, we assume in addition to the previous hypotheses the following conditions: there exists $\lambda>0$ such that
\begin{enumerate} 
\item[\bf{(HP1)}] (Assumption for $H$): $H$ and $H^*$ are of class $C^2$ with
\be\label{hypH31}
\lambda I_d \leq D^2_{pp}H\leq \lambda^{-1}I_d\;\;{\rm{and}}\;\;  \lambda I_d \leq D^2_{qq}H^*\leq \lambda^{-1}I_d,
\ee
\be\label{hypH3}
|D^2_{xx}H^*(x,p)|\leq C(1+|p|^2), \qquad |D^2_{xp}H^*(x,p)|\leq C(1+|p|).
\ee

\item[\bf{(HP2)}] (Assumption on $F$): $f$ is of class $C^2$ on $\T^d\times [0,\ov m]$ and, for any $m\in [0,\ov m]$ and $\alpha\geq 0$, 
\be\label{hyp:ineqF+F*}
F(x,m)+F^*(x,\alpha) -\alpha m \geq \frac{\lambda}{2} | \alpha_1-f(x,m)|^2 + p(\ov m-m),
\ee
where 
$p=(\alpha-f(x,\ov m))_+$ and $\alpha_1= \alpha-p$. 
\item[\bf{(HP3)}] (Assumption on $g$): $g$ is of class $C^2$.
\end{enumerate}

\begin{remark}
Observe that the assumption {\bf(HP2)} is fulfilled if $\partial^2_{\alpha\alpha}F^*(x,\alpha)\geq \lambda$ on $(0, f(x,\ov m))$ for some $\lambda>0$. This assumption of course holds if $f(x,\ov m)=0$. If $f(x,\ov m)>0$, since $\partial_\alpha F^*(x,f(x,m))=m$ on $(0,\ov m)$, the Implicit Function Theorem implies that $\partial^2_{\alpha\alpha}F^*(x,\alpha)= 1/\partial_mf(x,m)$, which means that the assumption is indeed satisfied as soon as $\partial_mf(x,m)$ is bounded from above on $(0,\overline m)$, which makes it a very natural assumption. Among the examples that we have in mind, we underline the case where $H(x,p)=|p|^2/2-f(x)$ and $F(x,m)=0$ if $m\in[0,\ov m]$ and $+\infty$ otherwise. Notice that the same example could be written with $F(x,m)=f(x)m$ if $m\in[0,\ov m]$ and $+\infty$ otherwise and $H(x,p)=|p|^2/2$ (these different expression give rise to the same global and individual problems), but in this case {\bf(HP2)} would not be satisfied (and also {\bf(H4)} would be violated).
\end{remark}

\begin{theorem}\label{regularity_pressure} Let $(u,m,\beta,\beta_T)$ be a solution of the MFG system \eqref{MFGdensity}. 
Under the above assumptions, $f(\cdot,m(\cdot,\cdot)) \in H^1_{\rm{loc}}((0,T)\times \T^d)$ and $\beta$ is absolutely continuous in $(0,T)\times \T^d$ with $$\beta\in L^2_{\rm{loc}}((0,T);BV(\T^d))\hookrightarrow L^{d/(d-1)}_{\rm{loc}}((0,T)\times \T^d).$$ 
\end{theorem}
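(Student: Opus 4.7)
The strategy is to mimic the pressure regularity argument of Brenier and Ambrosio--Figalli for incompressible Euler, transporting it to the relaxed optimization formulation for $\mathcal{A}$, and exploiting the strict convexity afforded by \textbf{(HP2)}. The first step is a \emph{coercive form of optimality}. Combining the chain of inequalities underlying Theorem~4.3 (duality between Problems \ref{pr:relaxed} and \ref{pr:dual}, together with Lemma \ref{lem:ineqmwphialpha}) with the strengthened convexity inequality \eqref{hyp:ineqF+F*}, I would show that every competitor $(\tilde u,\tilde\alpha)\in\mathcal{K}_R$ obeys
$$
\mathcal{A}(\tilde u,\tilde\alpha)-\mathcal{A}(u,\alpha)\;\geq\;\frac{\lambda}{2}\int_0^T\!\!\int_{\T^d}\bigl|\tilde\alpha_1^{ac}-f(x,m)\bigr|^2\,dx\,dt+\int_0^T\!\!\int_{\T^d}\tilde p\,(\ov m-m)\,dx\,dt,
$$
where $\tilde p=(\tilde\alpha^{ac}-f(x,\ov m))_+$ and $\tilde\alpha_1^{ac}=\tilde\alpha^{ac}-\tilde p$, plus the singular-part contribution $\ov m\,\tilde\alpha^s([0,T]\times\T^d)-\ov m\,\alpha^s([0,T]\times\T^d)-\int m\,d(\tilde\alpha^s-\alpha^s)$. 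This is the MFG analogue of the quadratic deficit exploited in \cite{af}.

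The second step is to produce a one-parameter family of admissible competitors. Fix $V\in C_c^\infty((0,T)\times\T^d;\R^d)$ and, for small $\epsilon$, set $\Phi^\epsilon(t,x)=(t,x+\epsilon V(t,x))$ and $u^\epsilon(t,x):=u(t,\Phi^\epsilon(t,x))$. A direct chain-rule computation on the inequality $-\partial_t u+H(x,Du)\leq \alpha$ (pulled back through $\Phi^\epsilon$) yields
$$
-\partial_t u^\epsilon+H(x,Du^\epsilon)\;\leq\;\alpha^\epsilon,\qquad \alpha^\epsilon:=\alpha\circ\Phi^\epsilon+\epsilon\,R^\epsilon[V;u],
$$
where the remainder $R^\epsilon$ is controlled by $\|V\|_{C^1}$ together with $|Du|$ and the $C^2$-bounds on $H$ from \textbf{(HP1)}. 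To make the construction rigorous at the boundary of $(0,T)\times\T^d$ I would work with the smooth minimizing sequence $(u_n,\alpha_n)$ provided by Lemma \ref{lem.approx}, pass through the approximation and then let $n\to\infty$; the cut-off is automatic since $V$ has compact support away from $\{0,T\}$. At this level the singular part of $\alpha$ is pushed forward by $\Phi^\epsilon$, whereas its absolutely continuous part transforms by the Jacobian of $\Phi^\epsilon$, which expands as $1+\epsilon\,\mathrm{div}V+O(\epsilon^2)$.

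The third step extracts the regularity estimate. Insert $(u^\epsilon,\alpha^\epsilon)$ into the coercive inequality from Step~1. On the one hand, the left-hand side controls $\|\tilde\alpha_1^{ac}-f(\cdot,m)\|_{L^2}^2$, which in turn controls difference quotients of $f(\cdot,m)$ in the direction $V$. On the other hand, using the continuity equation $\partial_t m+\mathrm{div}(w)=0$ to integrate by parts and the identity $w=-mD_pH(\cdot,Du)$, the first-order (in $\epsilon$) variation of $\mathcal{A}(u^\epsilon,\alpha^\epsilon)$ \emph{cancels}, and the second-order terms produce
$$
\mathcal{A}(u^\epsilon,\alpha^\epsilon)-\mathcal{A}(u,\alpha)\;\leq\;\epsilon\int_0^T\!\!\int_{\T^d}\beta\,\mathrm{div}(V)\,dx\,dt+C\epsilon^2\,\|V\|_{C^1}^2.
$$
Combined with Step~1 and after dividing by $\epsilon$, a Cauchy--Schwarz argument of the kind used in \cite{af} gives a bound of the form
$$
\Bigl|\int_0^T\!\!\int_{\T^d}\beta\,\mathrm{div}(V)\,dx\,dt\Bigr|\;\leq\;C\|V\|_{L^\infty_t L^\infty_x}\Bigl(\textstyle\int_K\beta\Bigr)^{1/2}\|V\|_{L^2_tL^2_x}+C\|V\|_{C^1}^2,
$$
for $V$ compactly supported in any $K\subset\subset(0,T)\times\T^d$. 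By duality with $L^\infty$-vector fields and the characterization of $BV$, this yields $\beta\in L^2_{\rm{loc}}((0,T);BV(\T^d))$; the Sobolev embedding $BV(\T^d)\hookrightarrow L^{d/(d-1)}(\T^d)$ then completes the integrability claim. The $H^1_{\rm{loc}}$ regularity of $f(\cdot,m)$ is read off the $L^2$ control of the directional difference quotients coming from the left-hand side.

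The main obstacle is Step~3: producing the correct algebraic cancellation that eliminates the $O(\epsilon)$ terms and turning the $O(\epsilon^2)$ terms into a structural bound on $\int\beta\,\mathrm{div}(V)$. This requires handling carefully the singular part $\alpha^s$ under $\Phi^\epsilon_\sharp$ (here the characterization $\limsup_{\ep\to 0}m_\ep=\ov m$ at $\alpha^s$-a.e.\ point from Definition~\ref{def:solution} is essential, since it tells us the singular part sees only the saturated set where $\bar m-m=0$), and controlling the pointwise values of $Du$ appearing in the remainder $R^\epsilon$ via a Hardy--Littlewood maximal function estimate, exactly as in \cite{af}. This is precisely where the $C^2$ assumptions on $H$, $f$ and $g$ and the uniform convexity \eqref{hypH31} come into play.
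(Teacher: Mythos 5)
Your overall inspiration (quadratic deficit \emph{\`a la} Brenier/Ambrosio--Figalli, coercivity from \eqref{hyp:ineqF+F*}) matches the paper, and your Step~1 is essentially the paper's inequality \eqref{estifond} combined with duality and the smooth minimizing sequence of Lemma \ref{lem.approx}. But the core of your argument (Steps 2--3) is not the paper's and, as written, does not work. The paper does \emph{not} perform inner variations of the primal pair $(u,\alpha)$ by a general vector field $V$: it perturbs the \emph{dual} minimizer $(m,w)$ by a two-parameter family of space--time translations $m^{\delta,\eta}(t,x)=m(t+\zeta(t)\eta,x+\zeta(t)\delta)$ with a cutoff $\zeta$. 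Because this family is finite-dimensional and $H^*$, $F$, $g$ are $C^2$, optimality at $(\delta,\eta)=(0,0)$ immediately gives the quadratic bound $\mathcal{B}(m^{\delta,\eta},v^{\delta,\eta})\le\mathcal{B}(m,v)+C(\eta^2+|\delta|^2)$ (\eqref{eta_delta}) with \emph{no first-order term to cancel}. Your infinite-dimensional family $u^\epsilon=u\circ\Phi^\epsilon$ does not enjoy this: the first variation of $\mathcal{A}$ in the direction of a general $V$ does not vanish, and your claimed expansion $\mathcal{A}(u^\epsilon,\alpha^\epsilon)-\mathcal{A}(u,\alpha)\le\epsilon\int\beta\,\mathrm{div}(V)+C\epsilon^2\|V\|_{C^1}^2$ is internally inconsistent: since the left-hand side is nonnegative by minimality, dividing by $\epsilon$, letting $\epsilon\to0$ and replacing $V$ by $-V$ would force $\int_0^T\int_{\T^d}\beta\,\mathrm{div}(V)\,\dd x\dd t=0$ for every compactly supported $V$, i.e. $D_x\beta\equiv0$, which is absurd. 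The asserted ``cancellation of the $O(\epsilon)$ terms via the continuity equation and $w=-mD_pH(\cdot,Du)$'' is exactly the step that cannot hold in that form; what inner variations give is a stationarity identity, not a quantitative bound, and trying to use it here is circular.

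Moreover, even granting your inequality, the final duality step fails: with a remainder $C\|V\|_{C^1}^2$ you cannot bound the total variation of $\beta$ by testing against fields of unit sup-norm, since their $C^1$ norms are unbounded. The paper's mechanism avoids this entirely: it estimates the $L^1$-increments of the \emph{time-averaged} truncation $p_n^{\eta}$ (with $p_n=(\alpha_n-f(\cdot,\ov m))_+$ built from the Lipschitz minimizing sequence of Lemma \ref{lem.approx}), using the structure $\alpha_n=-\partial_tu_n+H(x,Du_n)$ to convert space-increments of $\alpha_n^\eta$ into differences of $Du_n$, which are controlled by the deficit terms $m^{\delta,\eta}|Du_n-D_qH^*(x,-v^{\delta,\eta})|^2$ and by $p_n(\ov m-m^{\delta,\eta})$ from \eqref{estifond}. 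The BV bound then comes from choosing $\delta=\eta e$, letting first $n\to\infty$ and then $\eta\to0$: the whole estimate scales like $\eta$, so dividing by $\eta$ yields $\int_{t_1}^{t_2}\int_{\T^d}\sigma(t)\nabla\psi\cdot e\,\dd p\le C\|\psi\|_{L^\infty}\|\sigma\|_{L^2}$, hence $p\in L^2_{\rm{loc}}((0,T);BV(\T^d))$ and finally $0\le\beta\le p+f(\cdot,\ov m)$. Also note two smaller points: the $H^1_{\rm{loc}}$ regularity of $f(\cdot,m)$ is obtained in the paper from the translated \emph{dual} competitor (the deficit controls $|f(\cdot,m)-f(\cdot,m^{\delta,\eta})|^2$), not from perturbing $\alpha$; and the maximal-function machinery you invoke is used in the paper only later, in Section \ref{sec:nash}, not in this regularity proof.
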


As we said, the proof is largely inspired by the works of Brenier (see \cite{br}) and Ambrosio-Figalli (see \cite{af}) on the incompressible Euler's equations.

\begin{proof}[Proof of the Theorem \ref{regularity_pressure}] 
By abuse of notion, we use $\cB(m',v')$ meaning $\cB(m',m'v')$ for any admissible pair $(m',m'v')$ in the dual problem ($v'$ denoting the velocity field). 

Throughout the proof, $(u,m,\beta,\beta_T)$ is a fixed solution of the MFG system \eqref{MFGdensity} and we define $\alpha$ by \eqref{def.alpha} and set
$w= -mD_pH(x, Du)$. Recall that $(m,w)$ is a minimizer  for $\cB$. We also set  $v:=w/m$ and construct competitors $(m^{\d,\eta},m^{\d,\eta}v^{\d,\eta})$ in the following  way: let us fix  $0<t_1<t_2<T$ and let $\zeta\in C_c^{\infty}((0,T);[0,1])$ be a smooth cut-off such that $\zeta\equiv 1$ on $[t_1,t_2];$
for $\eta>0$ small and $\d\in\R^d$ small (such that $t+\zeta(t)\eta\in(0,T)$ for all $t\in[0,T]$), we denote 
$$m^{\d,\eta}(t,x):=m(t+\zeta(t)\eta,x+\zeta(t)\d)$$ 
the time-space translation of the density and let 
$$v^{\d,\eta}(t,x):=v(t+\zeta(t)\eta,x+\zeta(t)\d)(1+\eta\zeta'(t))-\zeta'(t)\d$$
the velocity field associated to $m^{\d,\eta}.$ Indeed, by construction $(m^{\d,\eta},m^{\d,\eta}v^{\d,\eta})$ solves the continuity equation, and satisfies the other constraints.

{\it Step 0.}  Let us collect some tools now.

{\it First}, we have
\be\label{eta_delta}
 {\mathcal B}(m^{\delta,\eta},v^{\delta,\eta})
\leq {\mathcal B}(m,v)+ C(\eta^2+|\delta|^2).
\ee
Indeed, let us denote by $\xi_\eta$ the inverse map of $t\mapsto t+\zeta(t)\eta$. Then, after changing variables,  
$$
\begin{array}{l}
\ds  {\mathcal B}(m^{\delta,\eta},v^{\delta,\eta}) \\ 
\qquad =   \ds 
 \int_0^T \int_{\T^d} \left[m(s,y) H^*\left(y-\zeta(\xi_\eta(s))\delta, -v(s,y)(1+\eta\zeta'(\xi_\eta(s)))+\zeta'(\xi_\eta(s))\delta)\right) \right. \\
\qquad \qquad \ds  \qquad \qquad + \left. F(y-\zeta(\xi_\eta(s))\delta, m(s,y))\right]  \xi_\eta'(s) \dd y \dd s
 +\int_{\T^d} g(y) m(T, y) \dd y.
 \end{array}
 $$
 In view of our $C^2$ regularity assumptions on $H^*$, $F$ and $g$, the map $(\eta,\delta)\mapsto {\mathcal B}(m^{\delta,\eta},v^{\delta,\eta})$ is $C^2$. We obtain \eqref{eta_delta} by optimality of $(m,v)$.

{\it Second}, by stationarity of the problem for ${\mathcal B}$ (it is enough to consider perturbations of form $(m^{0,\eta},v^{0,\eta})$ for $\zeta$ with compact support, not necessarily 1 on $[t_1,t_2]$), we have 
$$
\int_{\T^d} \left\{m(H^*(x,-v(t,x))+ D_qH^*(x,-v(t,x))\cdot v(t,x))+F(x,m(t,x))\right\}\dd x= constant.
$$
From our assumption on $H^*$, we have 
$$
H^*(x,-v)+D_qH^*(x,-v)\cdot v\leq H^*(x,0)-\frac{\lambda}{2}|v|^2.
$$
Thus  
\begin{equation}\label{kin_en1}
\ds{\rm{ess-sup}}_{t\in [0,T]} \int_{\T^d} m(t,x)|v(t,x)|^2\dd x\leq C.
\end{equation}
By \eqref{hypH31}, we have $D^2_{qq}H^*\leq (1/\lambda) I_d$ and therefore \eqref{kin_en1} implies
\begin{equation}\label{kin_en2}
\ds{\rm{ess-sup}}_{t\in [0,T]} \int_{\T^d}m(t,x)|D_qH^*(x,-v(t,x))|^2\dd x\le C.
\end{equation}
 
{\it Third}, for any smooth map $(u',\alpha')$, with $\alpha'\geq 0$, and $(m',w')\in\cK_D$ (where $v'=w'/m'$) competitor for the primal and the dual problems respectively, we have  
\begin{align*}
{\mathcal A}(u',\alpha') +{\mathcal B}(m',v') \geq& 
\int_0^T\int_{\T^d} \left\{m'(H(x,Du')+H^*(x,-v')+ v'\cdot Du')\right\}\dd x\dd t\\ 
&+ \int_0^T\int_{\T^d} \left\{F(x,m')+F^*(x,\alpha')-\alpha' m'\right\}\dd x\dd t.
\end{align*}
In view of our assumptions on {\bf (HP1)} and {\bf (HP2)}, we have 
the key inequality 
\be\label{estifond}
\begin{array}{rl}
\ds {\mathcal A}(u',\alpha') +{\mathcal B}(m',v')  \geq & \ds 
\frac{\lambda}{4} \int_0^T\int_{\T^d} m'(t,x)|Du'-D_qH^*(x,-v')|^2\dd x\dd t\\[10pt]
& +\ds \frac{\lambda}{4} \int_0^T\int_{\T^d} m'(t,x)|v'+D_pH(x,Du')|^2\dd x\dd t\\[10pt]
& \ds + \int_0^T\int_{\T^d}\left\{ \frac{\lambda}{2}| \alpha_1'-f(x,m')|^2 + p'(\ov m-m')\right\}\dd x\dd t
\end{array}
\ee
where 
$p'=(\alpha'-f(x,\ov m))_+$ and $\alpha_1'= \alpha'-p'$. 

With the help of these tools let us show now the statements of the theorem. 

\bigskip
{\it Step 1.} We first check that $f(\cdot,m) \in H^1_{\rm{loc}}((0,T)\times \T^d)$.  Let us fix $(m',v')$ to be a smooth competitor for $\cB$ and let $(u_n,\alpha_n)$ be the minimizing sequence for Problem \ref{pr:relaxed} defined in Lemma \ref{lem.approx}: we know that $(\alpha_n)$ is bounded in $L^1$ and converges to the (nonnegative) measure $\alpha$ defined from $(\beta,\beta_T)$ by \eqref{def.alpha}. Then, passing to the limit in the inequality
$$
{\mathcal A}(u_n,\alpha_n)+ {\mathcal B}(m',v') \geq
\int_0^T\int_{\T^d} \left\{F(x,m')+F^*(x,\alpha_n)-\alpha_n m'\right\}\dd x\dd t,
$$
we get
$$
\inf_{\cK_R} {\mathcal A}+ {\mathcal B}(m',v')  \geq
\int_0^T\int_{\T^d} \left\{F(x,m')+F^*(x,\alpha^{ac})-\alpha^{ac} m'\right\}\dd x\dd t+ \int_0^T\int_{\T^d}  (\ov m-m')\dd\alpha^s.
$$
In view of the proof of Theorem \ref{thm:sol}, we have $\alpha^{ac}\geq f(\cdot,m)$, with an equality in $\{m<\ov m\}$. 
So, if we set as above $p=(\alpha^{ac}-f(x,\ov m))_+$ and $\alpha^{ac}_1= \alpha^{ac}-p$, then $\alpha^{ac}_1=f(\cdot, m)$. By \eqref{hyp:ineqF+F*}, this implies that 
$$
\inf_{\cK_R} {\mathcal A}+ {\mathcal B}(m',v')  \geq
\int_0^T\int_{\T^d} \frac{\lambda}{2} | f(x,m)-f(x,m')|^2\dd x\dd t,
$$
an inequality which remains true for any $(m',v')\in\cK_D$ (not necessarily smooth ones). 
Adding $\ds\inf_{\cK_R}\cA$ to inequality \eqref{eta_delta} and using the duality $\ds\inf_{\cK_R}\cA+\min_{\cK_D}\cB=0$ we have  
\be\label{estim_imp}
\ds\inf_{\cK_R} {\mathcal A}+ {\mathcal B}(m^{\delta,\eta},v^{\delta,\eta})\leq C(\eta^2+|\delta|^2), 
\ee
which implies
$$
\int_{t_1}^{t_2}\int_{\T^d}  | f(x,m)-f(x,m^{\delta,\eta})|^2\dd x\dd t\leq C(\eta^2+|\delta|^2)
$$
and the regularity of $f$ in $x$ allows to conclude $f(\cdot,m)\in H^1_{\rm{loc}}((0,T)\times\T^d).$
\bigskip

{\it Step 2.} Let $(u_n,\alpha_n)$ be the minimizing sequence defined by Lemma \ref{lem.approx}. Without loss of generality, we can assume that
\begin{equation}\label{min_seq}
{\mathcal A}(u_n,\alpha_n)-\inf_{\cK_R}\cA\leq 1/n. 
\end{equation}
We set $p_n:= (\alpha_n-f(\cdot,\ov m))_+$ and $\alpha_{1,n}:= \alpha_n-p_n$. For $\varphi:[0,T]\times\T^d\to\R$ and for $\eta>0$ small, let us define the average of $\varphi$ on the $[t,t+\eta]$ by 
$$\varphi^\eta(t,x):=\int_0^1\varphi(t+\theta\eta,x)\dd\theta,$$
which is well-defined on $[t_1,t_2]\times\T^d.$ With this procedure, we similarly define the functions $p_{n}^\eta$, $\alpha_{n}^{\eta}$, etc. Let us take moreover $\sig\in C^\infty([t_1,t_2];[0,+\infty)).$

The aim of this step consists in estimating the quantity
$$
I:=\int_{t_1}^{t_2} \int_{\T^d} \sig(t) \ov m\, |p_{n}^{\eta}(t,x+\delta)-p_{n}^{\eta}(t,x)|\dd x\dd t.
$$
Namely, we prove that
\be\label{ineq.I}
\begin{array}{rl}
\ds I&\ds \leq C\|\sig\|_{L^2}\left(|\delta|+ \left(1+ \frac{|\delta|}{\eta}\right) (1/n+ |\delta|^2+\eta^2)^{1/2}\right)\\
&\ds+C\|\sig\|_{L^\infty}(1/n+\eta^2+ |\delta|^2)^{1/2}\left[(1/n+\eta^2+ |\delta|^2)^{1/2}+|\d|\left(1+(1/n+\eta^2+ |\delta|^2)^{1/2}\right)\right]\\
&\ds+C\left\{\|\sig\|_{L^2}^2+\|\sig\|_{L^\infty}\left[(1/n+|\d|^2+\eta^2)+(1/n+|\d|^2+\eta^2)^{1/2}\right]\right\}^{1/2}(1/n+|\d|^2+\eta^2)^{1/2}\\
&\ds =: X(\sig,1/n,\d,\eta).
\end{array}
\ee
We will show in the last two steps that this inequality easily entails the desired estimates on $p$ and $\beta$. 

The proof of \eqref{ineq.I} is quite long and relies on the combination of \eqref{estifond}, \eqref{estim_imp} and \eqref{min_seq} which imply that 
\be\label{estifondBIS}
\begin{array}{rl}
\ds 1/n+C(\eta^2+|\delta|^2)  \geq & \ds 
\frac{\lambda}{4} \int_{t_1}^{t_2}\int_{\T^d} m^{\delta,\eta}(t,x)|Du_n-D_qH^*(x,-v^{\delta,\eta})|^2\dd x\dd t\\[10pt]
& +\ds \frac{\lambda}{4} \int_{t_1}^{t_2}\int_{\T^d} m^{\delta,\eta}(t,x)|v^{\delta,\eta}+D_pH(x,Du_n)|^2\dd x\dd t\\[10pt]
& \ds + \int_{t_1}^{t_2}\int_{\T^d}\left\{ \frac{\lambda}{2}| \alpha_{1,n}-f(x,m^{\delta,\eta})|^2 + p_n(\ov m-m^{\delta,\eta})\right\}\dd x\dd t
\end{array}
\ee

We have 
\begin{align*}
I&\leq \int_{t_1}^{t_2} \int_{\T^d} \sig(t) (\ov m -m(t,x)) |p_{n}^{\eta}(t,x+\delta)-p_{n}^{\eta}(t,x)|\dd x\dd t\\
&+\int_{t_1}^{t_2} \int_{\T^d} \sig(t) m(t,x) |p_{n}^{\eta}(t,x+\delta)-p_{n}^{\eta}(t,x)|\dd x\dd t\\
&=:I_{01}+I_{02}
\end{align*}
where the the first term can be estimated as follows:
\begin{align*}
I_{01}&\le  \|\sig\|_{L^{\infty}}\int_{t_1}^{t_2} \int_{\T^d}  (\ov m -m(t,x))\left\{ |p_{n}^{\eta}(t,x+\delta)|+|p_{n}^{\eta}(t,x)|\right\}\dd x\dd t\\
&=\|\sig\|_{L^{\infty}}\int_0^1\dd\theta\int_{t_1}^{t_2} \int_{\T^d}(\ov m -m(t,x))\left\{p_n(t+\theta\eta,x+\d)+p_n(t+\theta\eta,x)\right\}\dd x\dd t\\ 
&\le\|\sig\|_{L^{\infty}}\int_0^1\dd\theta\int_{0}^{T} \int_{\T^d}(\ov m -m^{-\d,-\theta\eta})p_n\dd x\dd t\\
&+\|\sig\|_{L^{\infty}}\int_0^1\dd\theta\int_{0}^{T} \int_{\T^d}(\ov m -m^{0,-\theta\eta})p_n\dd x\dd t.
\end{align*}
Now by  \eqref{estifondBIS} we
obtain that 
$$I_{01}\le C\|\sig\|_{L^\infty}(1/n+|\d|^2+\eta^2).$$

For the second term we have 
\begin{align*}
I_{02}\le&\int_{t_1}^{t_2} \int_{\T^d} \sig(t) m(t,x) |\alpha_{n}^{\eta}(t,x+\delta)-\alpha_{n}^{\eta}(t,x)|\dd x\dd t\\
&+\int_{t_1}^{t_2} \int_{\T^d} \sig(t) \ov m |\alpha_{1,n}^{\eta}(t,x+\delta)-\alpha_{1,n}^{\eta}(t,x)|\dd x\dd t\\
&:=I_1+I_2.
\end{align*}
To estimate the term $I_1$, let us compute 
\begin{align*}
\ds \alpha_{n}^{\eta}(t,x+\delta)-\alpha_{n}^{\eta}(t,x)&= \ds \int_0^1 -\partial_t u_n(t+\theta\eta,x+\delta)+H(x+\delta, Du_n(t+\theta \eta, x+\delta))\dd\theta \\
&- \int_0^1 -\partial_t u_n(t+\theta\eta,x)+H(x, Du_n(t+\theta \eta, x))\dd\theta \\
&= \ds - \eta^{-1}\int_0^1 [ Du_n(t+\eta,x+s\delta)- Du_n(t,x+s\delta)]\cdot\delta \dd s
\\
& +\int_0^1\int_0^1  D_xH(x+s\delta, Du_n(t+\theta \eta, x+s\delta))\cdot \delta  \ ds d\theta\\
&+\int_0^1\dd\theta\int_0^1   D_pH(x+s\delta, \xi_s)\cdot [Du_n(t+\theta \eta, x+\delta)-Du_n(t+\theta \eta, x) ] \dd s \dd\theta
\end{align*}
where $\xi_s:= (1-s) Du_n(t+\theta \eta, x)+s Du_n(t+\theta \eta, x+\delta)$. 
Thus,
\begin{align*}
 |\alpha_{n}^{\eta}(t,x+\delta)-\alpha_{n}^{\eta}(t,x)|&\le |\delta| \eta^{-1}\int_0^1 |Du_n(t+\eta,x+s\delta)- Du_n(t,x+s\delta)|\dd s \\
& + |\delta| \int_0^1\int_0^1  | D_xH(x+s\delta, Du_n(t+\theta \eta, x+s\delta))| \dd s\dd\theta\\
& +\int_0^1\int_0^1 |D_pH(x+s\delta, \xi_s)|\  |Du_n(t+\theta \eta, x+\delta)-Du_n(t+\theta \eta, x)| \dd s\dd\theta.
\end{align*}

In view of our assumption \eqref{hypH3} on $D_xH$ and $D_pH$: 
\begin{align*}
I_1&= \int_{t_1}^{t_2}\int_{\T^d} \sig(t) m(t,x)|\alpha_{n}^{\eta}(t,x+\delta)-\alpha_{n}^{\eta}(t,x)|\dd x\dd t \\ 
 &\leq |\delta| \eta^{-1} \int_{t_1}^{t_2}\int_{\T^d} \int_0^1 \sig(t) m(t,x)  |Du_n(t+\eta,x+s\delta)- Du_n(t,x+s\delta)|\dd s\dd x\dd t \\
&+ C|\delta| \int_{t_1}^{t_2}\int_{\T^d} \int_0^1 \int_0^1 \sig(t) m(t,x)  \left\{1+|Du_n(t+\theta \eta, x+s\delta)|^2\right\}\dd s\dd\theta\dd x\dd t\\
&+C\int_{t_1}^{t_2}\int_{\T^d} \int_0^1 \sig(t) m(t,x) \left\{1+ |Du_n(t+\theta \eta, x)|+ |Du_n(t+\theta \eta, x+\delta)|\right\}\\
&\ds \qquad \qquad \qquad\qquad \qquad \qquad\qquad\qquad  \times   |Du_n(t+\theta \eta, x+\delta)-Du_n(t+\theta \eta, x)| \dd\theta\dd x\dd t \\
&:= I_{11}+I_{12}+I_{13}.
\end{align*}

For $I_{11}$, we have 
\begin{align*}
 I_{11}& \leq |\delta| \eta^{-1}\int_{t_1}^{t_2}\int_{\T^d} \int_0^1\bigg\{ \sig(t) m(t,x) \big( |Du_n(t+\eta,x+s\delta)-D_qH^*(x,-v(t, x))| \\
&  \hspace{3.5cm}+ | D_qH^*(x,-v(t, x))- Du_n(t,x+s\delta)|\big)\bigg\}\dd s\dd x\dd t \\
& \leq  |\delta| \eta^{-1}\int_{t_1+\eta}^{t_2+\eta}\int_{\T^d} \int_0^1 \sig^{0,-\eta}(t) m^{-s\delta,-\eta}(t,x) |Du_n(t,x)-D_qH^*(x-s\d,-v^{-s\delta,-\eta}(t, x))| \dd s\dd x\dd t\\
&  +|\delta| \eta^{-1}\int_{t_1}^{t_2}\int_{\T^d} \int_0^1 \sig(t) m^{-s\delta,0}(t,x)|Du_n(t,x)-D_qH^*(x-s\d,-v^{-s\delta,0}(t, x))| \dd s\dd x\dd t
\end{align*}

By Cauchy-Schwarz and \eqref{estifondBIS} we obtain:
$$
I_{11} \leq C|\delta| \eta^{-1}\|\sig\|_{L^2} (1/n+\eta^2+|\delta|^2)^{1/2}.
$$
We now estimate $I_{12}$, that we bound from above as follows: 
\begin{align*}
I_{12}&\le C\|\sig\|_{L^2}|\d| +C|\d| \int_{t_1}^{t_2}\int_{\T^d} \int_0^1 \int_0^1 \sig(t) m(t,x)|D_qH^*(x,-v(t,x))|^2\dd s\dd\theta\dd x\dd t\\
&+C|\delta| \int_{t_1}^{t_2}\int_{\T^d} \int_0^1 \int_0^1 \sig(t) m(t,x)  \left\{|Du_n(t+\theta \eta, x+s\delta)|^2-|D_qH^*(x,-v(t,x))|^2\right\}\dd s\dd\theta\dd x\dd t
\end{align*} 
The second term can be estimated by \eqref{kin_en2}, while, for the third one, we use the inequality $|a|^2-|b|^2\leq |a-b|^2+|a-b||b|$ to get:
\begin{align*}
I_{12}&\le C\|\sig\|_{L^2}|\d|+ \\
&+C\|\sig\|_{L^{\infty}}|\delta| \int_{t_1}^{t_2}\int_{\T^d} \int_0^1 \int_0^1 m(t,x)|Du_n(t+\theta \eta, x+s\delta)-D_qH^*(x,-v(t,x))|^2\dd s\dd\theta\dd x\dd t\\ 
& +2C\|\sig\|_{L^{\infty}}|\delta| \int_{t_1}^{t_2}\int_{\T^d} \int_0^1 \int_0^1\bigg\{m(t,x)|Du_n(t+\theta \eta, x+s\delta)-D_qH^*(x,-v(t,x))|\\ 
&\hspace{4.5cm}\times|D_qH^*(x,-v(t,x))|\bigg\}\dd s\dd\theta\dd x\dd t\\
\end{align*}
\begin{align*}
\ \ &\le C\|\sig\|_{L^2}|\d|+ \\
&+C\|\sig\|_{L^{\infty}}|\delta| \int_{t_1+\theta\eta}^{t_2+\theta\eta}\int_{\T^d} \int_0^1 \int_0^1 m^{-s\d,-\theta\eta}(t,x)|Du_n(t, x)-D_qH^*(x-s\d,-v^{-s\d,-\theta\eta}(t,x))|^2\dd s\dd\theta\dd x\dd t\\
& +2C\|\sig\|_{L^{\infty}}|\delta| \int_{t_1+\theta\eta}^{t_2+\theta\eta}\int_{\T^d} \int_0^1 \int_0^1\bigg\{m^{-s\d,-\theta\eta}(t,x)|Du_n(t, x)-D_qH^*(x-s\d,-v^{-s\d,-\theta\eta}(t,x))|\\ 
&\hspace{4.5cm}\times |D_qH^*(x-s\d,-v^{-s\d,-\theta\eta}(t,x))|\bigg\}\dd s\dd\theta\dd x\dd t.
\end{align*}
As before, using the energy estimates \eqref{kin_en2} and \eqref{estifondBIS}  together with a Cauchy-Schwarz inequality in the last integral we obtain
$$I_{12}\le C\|\sig\|_{L^2}|\d|+C\|\sig\|_{L^{\infty}}|\d|\left\{(1/n+\eta^2+|\d|^2)+C(1/n+\eta^2+|\d|^2)^{1/2}\right\}.$$
It is easy to see that with the help of the estimations for $I_{11}$ and $I_{12}$ we can estimate $I_{13}$ as well. Hence we obtain
\begin{align*}
I_{13}&\le C\|\sig\|_{L^2}(1/n+|\d|^2+\eta^2)^{1/2}\\
&+C\left\{C\|\sig\|_{L^2}^2+C\|\sig\|_{L^\infty}\left[(1/n+|\d|^2+\eta^2)+(1/n+|\d|^2+\eta^2)^{1/2}\right]\right\}^{1/2}(1/n+|\d|^2+\eta^2)^{1/2}.
\end{align*}

\bigskip

Let us now take care of $I_2$. Setting $\ds f^\eta(t,x):= \int_0^1f(x,m(t+\theta\eta,x))\dd\theta$, we have
\begin{align*}
I_2&= \int_{t_1}^{t_2} \int_{\T^d} \sig(t) \ov m |\alpha_{1,n}^{\eta}(t,x+\delta)-\alpha_{1,n}^{\eta}(t,x)|\dd x\dd t\\
& \leq   \int_{t_1}^{t_2} \int_{\T^d} \sig(t) \ov m |\alpha_{1,n}^{\eta}(t,x+\delta)-f^\eta(t,x+\delta)|\dd x\dd t +\int_{t_1}^{t_2} \int_{\T^d} \sig(t) \ov m |f^\eta(t,x+\delta)-f^\eta(t,x)|\dd x\dd t \\
&  + \int_{t_1}^{t_2} \int_{\T^d} \sig(t) \ov m |f^\eta(t,x)-\alpha_{1,n}^{\eta}(t,x)|\dd x\dd t\\ 
& :=\; I_{21}+I_{22}+I_{23}.
\end{align*}

Since 
\begin{align*}
\ds I_{21} & \leq C \int_{t_1}^{t_2} \int_{\T^d}\int_0^1 \sig(t)| \alpha_{1,n}(t+\theta\eta, x+\delta)-f(x+\delta, m(t+\theta\eta,x+\delta))|\dd\theta\dd x\dd t\\
& \leq C \int_0^1\int_{t_1-\theta \eta}^{t_2-\theta \eta} \int_{\T^d} \sig^{-\theta\eta}(t)| \alpha_{1,n}(t, x)-f(x, m(t,x))|\dd x\dd t\dd\theta
\end{align*}
we obtain by Cauchy-Schwarz and \eqref{estifondBIS}: 
$$
 I_{21}\; \leq C \|\sig\|_{L^2}  (1/n+|\d|^2+\eta^2)^{1/2}.
$$
The term $I_{23}$ can be treated in the same way. For $I_{22}$, we have 
\begin{align*}
I_{22}&\leq C \|\sig\|_{L^2} \int_0^1 \left(\int_{t_1}^{t_2} \int_{\T^d} |f(x+\delta, m(t+\theta \eta,x+\delta))-f(x,m(t+\theta\eta,x))|^2\dd x\dd t\right)^{1/2}\dd\theta\\ 
&\leq C \|\sig\|_{L^2}|\delta|
\end{align*}
because $f(\cdot,m(\cdot,\cdot))$ is in $H^1_{\rm{loc}}((0,T)\times\T^d)$. 

Putting the above inequalities together gives \eqref{ineq.I}. 

\bigskip

{\it Step 3.} We now show that $p_n:= (\alpha_n-f(\cdot,\ov m))_+$ belongs to the space $L^2([t_1,t_2];BV(\T^d))$. Let us take a test function $\psi\in C_c^{\infty}((0,T)\times\T^d),$ $e\in\R^n$ with $|e|=1,$ $\eta>0$ small and let us set $\d:=\eta e.$ Let us estimate
\begin{align*}
\int_{t_1}^{t_2}\int_{\T^d}\sig^{-\eta}(t)&\frac{\psi^{-\eta}(t,x)-\psi^{-\eta}(t,x-\d)}{\eta}p_n(t,x)\dd x\dd t\\
&=\int_{t_1-\eta}^{t_2-\eta}\int_{\T^d}\sig(t)\frac{\psi(t,x)-\psi(t,x-\d)}{\eta}p_n^{\eta}(t,x)\dd x\dd t\\
&=\int_{t_1-\eta}^{t_2-\eta}\int_{\T^d}\sig(t)\psi(x)\frac{p_n^{\eta}(t,x)-p_n^{\eta}(t,x+\d)}{\eta}\dd x\dd t\\
&\le \|\psi\|_{L^{\infty}}\frac{1}{\eta}\int_{t_1-\eta}^{t_2-\eta}\int_{\T^d}\sig(t)|p_n^{\eta}(t,x)-p_n^{\eta}(t,x+\d)|\dd x\dd t\\
&\le \|\psi\|_{L^{\infty}}\frac{1}{\eta}X(\sig,1/n,\eta e,\eta).
\end{align*}
First let us recall that $p_n\weaks p$ as $n\to+\infty$ in $\fM([0,T]\times\T^d),$ which allows us to pass to the limit in the above inequality as $n\to+\infty$ and obtain
$$\int_{t_1}^{t_2}\int_{\T^d}\sig^{-\eta}(t)\frac{\psi^{-\eta}(t,x)-\psi^{-\eta}(t,x-\d)}{\eta}\dd p(t,x) \le \|\psi\|_{L^{\infty}}\frac{1}{\eta}X(\sig,0,\eta e,\eta).$$
Now sending $\eta\to 0$ and recalling \eqref{ineq.I} we have
$$\int_{t_1}^{t_2}\int_{\T^d}\sig(t)\nabla\psi(t,x)\cdot e\dd p(t,x)\le C\|\psi\|_{L^{\infty}}\|\sig\|_{L^2}.$$
Therefore we obtain that $p\in L^2([t_1,t_2];BV(\T^d))\hookrightarrow L^2([t_1,t_2];L^{d/(d-1)}(\T^d))$ and in particular, by the arbitrariness of $t_1$ and $t_2$ and by an injection we have $p\in L^{d/(d-1)}_{\rm{loc}}((0,T)\times\T^d).$

\bigskip

{\it Step 4.} Conclusion: as $0\leq \alpha_n\leq p_n+ f(\cdot,\ov m)$ and $(\alpha_n)$ converges to $\alpha$ defined by \eqref{def.alpha}, we have 
$0\leq \beta\leq p+f(\cdot,\ov m)$ in $(0,T)\times \T^d$. This proves that $\beta$ is absolutely continuous and belongs to  $L^{d/(d-1)}_{\rm{loc}}((0,T)\times\T^d).$
\end{proof}
\begin{remark}
Note that by the example provided in the Section \ref{sec:example} we have the sharpness of the above integrability result in the following sense: we cannot expect a bound for $p_n$ in  $L^{d/(d-1)}_{\rm{loc}}((0,T]\times\T^d)$, i.e., up to the final time, because of the occurrence of a possible jump at $t=T$. 
\end{remark}

\section{Nash equilibria for MFG with density constraints}\label{sec:nash}
Let us suppose in this section the additional assumptions {\bf{(HP1)}}, {\bf{(HP2)}} and {\bf{(HP3)}} as in Section~\ref{sec:regularity}. To define a proper notion of Nash equilibrium, we use the techniques for measures on paths, corresponding the trajectories of single agents. This will also allow us to clarify the meaning of the control problem \eqref{new_control}. The used machinery is inspired by \cite{af2} (Section 6) and also by \cite{carda} (Section 4.3) and \cite{carcarnaz} (Section 4). We remark also some similarities of this approach with the works modeling traffic congestion and Wardrop equilibria (see \cite{BraCarSan, CarJimSan}). 

\subsection{Density-constrained flows and a first optimality condition}
Let us recall that  $\Gamma$ denotes the set of absolutely continuous curves $\g:[0,T]\to\T^d$ and $\cP_2(\Gamma)$ the set of Borel probability measures $\tilde\et$  defined on $\Gamma$  such that 
$$\int_\Gamma\int_0^T|\dot\g(s)|^2\dd s\dd\tilde\et(\g)<+\infty.$$
We call $\tilde \et$ an {\it almost density-constrained flow} if there exists $C=C(\tilde\et)>0$ such that $0\le \tilde m_t\le C(\tilde\et)$ a.e. in $\T^d$ for all $t\in[0,T],$ where $\tilde m_t:=(e_t)_\#\tilde\et.$ If $C(\tilde\et)\le\ov m$ (the density constraint, given by our model) then we call $\tilde\et$ a \emph{density-constrained flow}.  Let us recall moreover that we use the definition of the Lagrangian as $L(x,v)=H^*(x,-v).$ 

In the whole section we consider a solution  $(u,m,\b,\b_T)$ of the MFG system \eqref{MFGdensity}. By Theorem \ref{thm:sol} and Theorem \ref{regularity_pressure}  this corresponds to $(u,\a)$ and $(m,w)$ solutions of Problem \ref{pr:relaxed} and Problem \ref{pr:dual} respectively, where
$$\alpha= f(\cdot, m) \dd x\dd t+\beta\dd x\dd t+\beta_T \dd(\d_T\otimes {\mathcal H}^{d}\mres \T^d)\;\;\;{\rm{and}}\;\;\; w= -mD_pH(x, Du).$$ 

Let us state the following results (in the spirit of Lemma 4.6.-4.8. from \cite{carda}) which characterize the density-constrained flows.
\begin{lemma}\label{lem:densflow}
Let $\tilde\et\in\cP_2(\Gamma)$ be an almost density-constrained flow and set $\tilde m_t:=(e_t)_\#\tilde\et.$ Then 
\begin{itemize}
\item[(i)] for all $0\le t_1<t_2\leq T$ we have
\begin{align*}
\int_{\T^d}u(t_1^+,x)\tilde m(t_1,x)\dd x &\le \int_{\T^d}u(t_2^-,x)\tilde m(t_2,x)\dd x+\int_{\Gamma}\int_{t_1}^{t_2}L(\g(t),\dot\g(t))\dd t\dd\tilde\et(\g)\\
&+\int_{t_1}^{t_2}\int_{\T^d}\alpha(t,x)\tilde m(t,x)\dd x\dd t.
\end{align*}
(we recall $\alpha(t,x):=f(x, m(t,x))+\b(t,x)$)
\item[(ii)] In particular, for all $0\le t_1<T$
\begin{align*}
\int_{\T^d}u(t_1^+,x)\tilde m(t_1,x)\dd x &\le \int_{\T^d}(g(x)+\b_T(x))\tilde m(T,x)\dd x+\int_{\Gamma}\int_0^TL(\g(t),\dot\g(t))\dd t\dd\tilde\et(\g)\\
&+\int_0^T\int_{\T^d}\alpha(t,x)\tilde m(t,x)\dd x\dd t.
\end{align*}
\end{itemize} 
\end{lemma}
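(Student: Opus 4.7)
I would establish (i) by interpreting the subsolution inequality $-\partial_t u + H(x, Du) \le \alpha$ along $\tilde\et$-trajectories. Formally, for any absolutely continuous curve $\g \in \Gamma$, the Fenchel--Young inequality $H(x, p) + L(x, v) \ge -p \cdot v$, applied with $p = Du(t, \g(t))$ and $v = \dot\g(t)$, gives
\[
-\frac{d}{dt} u(t, \g(t)) = -\partial_t u(t, \g(t)) - Du(t, \g(t)) \cdot \dot\g(t) \le L(\g(t), \dot\g(t)) + \alpha(t, \g(t)).
\]
Integrating this in $t$ on $[t_1, t_2]$, then in $\g$ against $\tilde\et$, and finally using the identity $(e_t)_\# \tilde\et = \tilde m_t$ to convert the endpoint terms into spatial integrals against $\tilde m_{t_j}$, I directly recover the inequality of (i). Part (ii) is the specialization $t_2 = T$ combined with the boundary trace $u(T^-, \cdot) = g + \beta_T$ from Definition \ref{def:solution}, the jump $\beta_T$ being absorbed into the discrepancy between $u(T^-)$ and $g$.

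To make the above rigorous given the low regularity (only $BV$ in time for $u$, only a measure for $\alpha$), I use the Lipschitz approximating sequence $(u_n, \alpha_n)$ provided by Lemma \ref{lem.approx}, which satisfies $-\partial_t u_n + H(x, Du_n) = \alpha_n$ both a.e.\ and in the viscosity sense, with $u_n \to \tilde u$ in $L^1((0,T) \times \T^d)$ (where $\tilde u \ge u$ with equality $m$-a.e.), $\alpha_n \to \alpha$ in measure with a uniform $L^1$-bound, and $(\tilde u, m, \beta, \beta_T)$ itself a solution of the MFG system. For each Lipschitz $u_n$ the chain rule along $\tilde\et$-a.e.\ $\g$ is classical, so that after combining with the a.e.\ HJ equation and Fenchel--Young, integrating on $[t_1, t_2]$ and against $\tilde\et$ yields the prelimit bound
\[
\int_{\T^d} u_n(t_1, x)\, \tilde m_{t_1}(x) \dd x \le \int_{\T^d} u_n(t_2, x)\, \tilde m_{t_2}(x) \dd x + \int_{\Gamma} \int_{t_1}^{t_2} L(\g, \dot\g) \dd t \dd\tilde\et + \int_{t_1}^{t_2} \int_{\T^d} \alpha_n\, \tilde m_t \dd x \dd t.
\]

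The main obstacle is to pass to the limit $n \to \infty$, especially at the specified time slices. The density constraint $\tilde m_t \le C(\tilde\et)$ gives a uniform $L^\infty$-bound on the spatial densities, which, combined with the $L^1$-convergence of $u_n$ and the in-measure convergence of $\alpha_n$ under a uniform $L^1$-control, lets me pass to the limit in both space-time integrals on the right. For the boundary terms at $t_1$ and $t_2$, I will exploit the $BV$-in-time regularity: the one-sided traces $\tilde u(t_1^+, \cdot)$ and $\tilde u(t_2^-, \cdot)$ exist in $L^1(\T^d)$, and a diagonal extraction approaching them through sample times $t_j^k \to t_j^\pm$ (where $u_n(t_j^k, \cdot) \to \tilde u(t_j^k, \cdot)$ in $L^1$) delivers the limit, again thanks to the $L^\infty$-bound on $\tilde m_t$ and the narrow continuity of $t \mapsto \tilde m_t$. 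The resulting inequality is written in terms of $\tilde u$; by Lemma \ref{lem.approx}(vi) the quadruple $(\tilde u, m, \beta, \beta_T)$ is itself a solution of \eqref{MFGdensity}, so that we may assume without loss of generality $u = \tilde u$ in the statement (choosing this maximal $BV$-representative). Part (ii) then follows from (i) specialized to $t_2 = T$ together with the boundary identity $u(T^-, \cdot) = g + \beta_T$.
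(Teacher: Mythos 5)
Your overall strategy (Fenchel--Young along curves, integrate in $t$ and against $\tilde\et$, push forward with $e_t$, then handle traces) is the same as the paper's, but two steps in your limit passage do not work as written. The paper does \emph{not} use the Lipschitz minimizing sequence of Lemma \ref{lem.approx} here; it mollifies the fixed subsolution pair $(u,\alpha)$ directly, writing $-\partial_t u_n+H(x,Du_n)\le \alpha_n+r_n$ with $\alpha_n=\alpha\star\rho_n$ and a commutator error $r_n\to 0$ in $L^1$. The reason this choice matters is precisely the term $\int\!\!\int \alpha_n\,\tilde m$: your justification (``in-measure convergence of $\alpha_n$ under a uniform $L^1$-control'' plus $\tilde m\in L^\infty$) is not sufficient, since convergence in the sense of measures, or in measure with only an $L^1$ bound, does not allow testing against the merely bounded, discontinuous density $\tilde m$ (mass can concentrate; think of $\alpha_n=n\mathds{1}_{A_n}$ with $|A_n|=1/n$). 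The paper closes this by invoking Theorem \ref{regularity_pressure} (available because (HP1)--(HP3) are standing assumptions in Section \ref{sec:nash}): $\beta$, hence $\alpha$ on $(0,T)\times\T^d$, is an $L^{d/(d-1)}_{\rm loc}$ \emph{function}, so its mollifications converge \emph{strongly} in $L^{d/(d-1)}_{\rm loc}$ and pair with $\tilde m\in L^\infty\subset L^d_{\rm loc}$. You never use the regularity of $\beta$ anywhere, yet without it neither your limit passage nor even the Lebesgue integral $\int\!\!\int\alpha\,\tilde m\,\dd x\dd t$ in the statement is under control. (A smaller point: for your Lipschitz $u_n$ the chain rule along $\tilde\et$-a.e.\ curve is not ``classical''; it needs a Fubini argument using $\tilde m_t\le C(\tilde\et)$ to ensure the curve avoids the non-differentiability set for a.e.\ $t$ --- the paper's smooth mollifications avoid this entirely.)

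The second genuine gap is the $u$ versus $\tilde u$ issue. Lemma \ref{lem.approx} gives $u_n\to\tilde u$ with $\tilde u\ge u$ and $\tilde u=u$ only $m$-a.e., whereas the lemma is stated for the fixed solution $u$ and for an \emph{arbitrary} almost density-constrained flow $\tilde\et$, whose densities $\tilde m_t$ need not be absolutely continuous with respect to $m$. Your prelimit inequality therefore yields the statement for $\tilde u$, and this does not imply it for $u$: on the left-hand side $\tilde u(t_1^+)\ge u(t_1^+)$ helps, but on the right-hand side $\tilde u(t_2^-)\ge u(t_2^-)$ goes the wrong way, so the inequality for $\tilde u$ is strictly weaker than the claim. ``Assume without loss of generality $u=\tilde u$'' is not legitimate, because the same $u$ is the one used throughout the section (in Definition \ref{def:optdenflow}, \eqref{opt_local}, Theorem \ref{thm:minpath}). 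Mollifying $u$ itself, as the paper does, removes this problem at the source; the extension from a.e.\ $t_1<t_2$ to all $t_1<t_2$ then follows from the $BV$ trace argument you sketch, which is in line with the paper.
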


\begin{proof} 
Let us recall that $u$ satisfies, in the sense of measures,  
$$
-\partial_t u +H(x,Du)\leq \alpha \qquad \rm{in}\; (0,T)\times \T^d,
$$
where $\alpha$ belongs to $L^{d/(d-1)}_{loc}((0,T)\times\T^d)$ thanks to Theorem \ref{regularity_pressure}. 
If we regularize $u$ into $u_n$ and $\a$ into $\a_n$ by convolution (with a compact support in $B_{1/n}(0)$), we obtain 
$$
-\partial_t u_n +H(x,Du_n)\leq \alpha_n+ r_n \qquad \rm{in}\; (1/n,T-1/n)\times \T^d,
$$
where 
$$
r_n(t,x)= H(x,Du_n(t,x))-H(\cdot, Du)\star \rho_n(t,x).
$$
Note that $(r_n)$ tends to $0$ in $L^1((0,T)\times \T^d)$. By the way, if $H$ is independent of $x$, one also has $r_n\le 0$. Let us fix $0<t_1<t_2<T$ and $n$ large. 
Now for any $\g\in H^1([0,T])$ we have
\begin{align}\label{derive}
\frac{\dd}{\dd t}\left(u_n(t,\g(t))-\int_t^TL(\g(s),\dot\g(s))\dd s\right)&\ge\partial_t u_n(t,\g(t))-H(\g(t),Du_n(t,\g(t)))\nonumber\\
&\ge-\a_n(t,\g(t))-r_n(t,\g(t)).
\end{align}
Integrating this inequality on $[t_1,t_2],$ then over $\Gamma$ w.r.t. $\tilde\et,$ we obtain
\begin{align*}
\int_{\T^d}u_n(t_1,x)\tilde m(t_1,x)\dd x&\le \int_{\T^d}u_n(t_2,x)\tilde m(t_2,x)\dd x +\int_\Gamma\int_{t_1}^{t_2}L(\g(t),\dot\g(t))\dd t\dd\tilde\et(\g)\\
&+\int_{\T^d}\int_{t_1}^{t_2}[\a_n(t,x)+r_n(t,x)]\tilde m(t,x)\dd t\dd x.
\end{align*} 
We recall the fact that $\tilde m\in L^\infty([0,T]\times\T^d).$ Since $(u_n)$ strongly  converges in $L^1$ to $u\in BV([0,T]\times\T^d)$, we have the existence of $J\subset(0,T)$ of full measure such that  for every $t_1,t_2\in J, t_1<t_2,$ the first two integrals pass to the limit as $n\to+\infty.$ By the strong convergence in $L^{d/(d-1)}([t_1,t_2]\times\T^d)$ of $(\a_n)$ to $\a$ and in $L^1([t_1,t_2]\times\T^d)$ of $(r_n)$ to 0, we can pass to the limit as $n\to+\infty$ is the last integral as well. So, for a.e. $0<t_1<t_2<T$, we have
\begin{align*}
\int_{\T^d}u(t_1,x)\tilde m(t_1,x)\dd x&\le \int_{\T^d}u(t_2,x)\tilde m(t_2,x)\dd x +\int_\Gamma\int_{t_1}^{t_2}L(\g(t),\dot\g(t))\dd t\dd\tilde\et(\g)\\
&+\int_{\T^d}\int_{t_1}^{t_2}\a(t,x)\tilde m(t,x)\dd t\dd x.
\end{align*} 
In order to show that the inequality holds for any $t_1<t_2$, let us now check that 
$$
\lim_{t'\to t^\pm} \int_{\T^d}u((t')^+,x)\tilde m(t,x)\dd x = \int_{\T^d}u(t^\pm,x)\tilde m(t,x)\dd x, 
$$
where $u((t')^\pm,\cdot)$ is understood in the sense of trace and $\tilde m(t,\cdot)$ is the (bounded) density of the continuous representative of the map $t\mapsto  \tilde m(t,\cdot)\dd x$ (for the $L^\infty$ weak$-\star$ convergence). The above limit basically follows from the trace properties of $BV$ functions, but for the sake of completeness let us sketch it below. 
Let $u_n$ be a standard mollification in space of $u$. As $u$ is in BV, $u((t')^+,\cdot)$ converges in $L^1$ to $u(t^\pm,\cdot)$ as $t'\to t^\pm$, so that $u_n((t')^+,\cdot)$ uniformly converges to $u_n(t^\pm,\cdot)$. Let us write $\ds \int_{\T^d}u((t')^+,x)\tilde m(t',x)\dd x$ as 
\be\label{kujzvbozsdcj}
\int_{\T^d}u_n((t')^+,x)\tilde m(t',x)\dd x
+ 
\int_{\T^d}(u((t')^+,x)-u_n((t')^+,x))\tilde m(t',x))\dd x.
\ee
By uniform convergence of $u_n((t')^+,\cdot)$, the first term in \eqref{kujzvbozsdcj} converges to $\ds \int_{\T^d}u_n(t^\pm,x)\tilde m(t,x)\dd x$, which is arbitrary close to $\ds \int_{\T^d}u(t^\pm,x)\tilde m(t,x)\dd x$ for $n$ large. As for the second term in \eqref{kujzvbozsdcj}, it is bounded by $ \|u((t')^+,\cdot)-u_n((t')^+,\cdot)\|_{L^1}   \| m\|_{L^\infty}$, which, by $L^1$ convergence of $u((t')^+,\cdot)$ to $u(t^\pm,\cdot)$, tends to $0$  uniformly in $t'$. This proves  (i). 

For (ii), we just apply (i) for $t_2=T$, since $u(T^-,\cdot)= g+ \beta_T$. 
\end{proof}

\begin{definition}\label{def:optdenflow}
We say that an $\et\in\cP_2(\Gamma)$ is an {\emph{optimal density-constrained flow}} associated with the solution $(u,m,\b,\b_T)$ if $ m(t,\cdot)=(e_t)_\#\et$, 
for all $t\in[0,T]$ and the following energy equality holds
\begin{align*}
\int_{\T^d}u(0^+,x)m_0(x)\dd x &= \int_{\T^d}g(x)m(T,x)\dd x+ \ov m\int_{\T^d}\b_T\dd x+\int_{\Gamma}\int_0^TL(\g(t),\dot\g(t))\dd t\dd\et(\g)\\
&+\int_0^T\int_{\T^d}\left(f(x,m(t,x))+\b(t,x)\right)m(t,x)\dd x\dd t.
\end{align*}
\end{definition}

Note that the above definition is the reformulation in terms of density-constrained flows of the energy equality from Definition \ref{def:solution} point \eqref{point4}.  
\begin{remark}
Let us observe that for an optimal density-constrained flow $\et,$ the energy equality in Definition \ref{def:optdenflow} holds for any $0\le t_1<t_2\le T$ as well, i.e.
\begin{align}
\int_{\T^d}u(t_1^+,x)m(t_1,x)\dd x &= \int_{\T^d}u(t_2^-,x)m(t_2,x)\dd x+\int_{\Gamma}\int_{t_1}^{t_2}L(\g(t),\dot\g(t))\dd t\dd\et(\g)\nonumber\\
&+\int_{t_1}^{t_2}\int_{\T^d}\left(f(x,m(t,x))+\b(t,x)\right)m(t,x)\dd x\dd t.\label{opt_local}
\end{align}
This can be easily deduced using the inequalities from Lemma \ref{lem:densflow} three times on the intervals $[0,t_1],[t_1,t_2]$ and $[t_2,T]$ together with the global equality from Definition \ref{def:optdenflow} and the fact that we have $(\partial_t u)^{s}$ is a non-negative measure (by the fact that $-\partial_t u+H(\cdot,Du)\le \a$ and $\a$ does not have singular part in $(0,T)\times\T^d$), i.e. one has always $u(t^-,\cdot)\le u(t^+,\cdot)$ a.e. for all $t\in(0,T).$

Identity \eqref{opt_local} implies also that $(\partial_tu)^s=0$ on the support of $m,$ more precisely 
\begin{equation}\label{u+=u}
\int_{\T^d}u(t^+,x)m(t,x)\dd x=\int_{\T^d}u(t^-,x)m(t,x)\dd x,
\end{equation}
for all $t\in(0,T).$
\end{remark}

The following proposition gives the existence result for an optimal density-constrained flow $\et.$ 
\begin{proposition}\label{prop:ex_flow}
There exists at least one optimal density-constrained flow $\et\in\cP_2(\Gamma)$ in the sense of the Definition \ref{def:optdenflow}.
\end{proposition}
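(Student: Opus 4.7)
The plan is to apply the superposition principle from the end of Section~\ref{sec:Hyp} to the natural velocity field
\[
v(t,x) := -D_pH(x, Du(t,x)) \qquad \text{(on } \{m>0\}\text{, extended arbitrarily elsewhere)},
\]
in order to obtain $\et \in \cP_2(\Gamma)$ with $(e_t)_\#\et = m(t,\cdot)$, and then to recover the energy equality in Definition~\ref{def:optdenflow} by rewriting point~(4) of Definition~\ref{def:solution} using the Fenchel--Young equality case along $\et$-a.e.\ curve.

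First, I would observe that by Theorem~\ref{thm:sol}, the pair $(m, mv)$ solves the continuity equation $\partial_t m + \diver(mv) = 0$ with $m(0)=m_0$, and that $t \mapsto m(t,\cdot)$ is narrowly continuous on $\cP(\T^d)$ (since it is $L^\infty$-weak-$\star$ continuous with $0\le m\le \ov m$). Thanks to assumption \textbf{(HP1)}, $H^*$ is comparable to $|q|^2$; combining this with the finiteness of $\cB(m, mv) = -\inf \cA$ gives
\[
\int_0^T\!\int_{\T^d} m|v|^2 \dd x \dd t < +\infty.
\]
The superposition principle then produces $\et \in \cP(\Gamma)$ such that $(e_t)_\#\et = m(t,\cdot)$ for every $t\in[0,T]$, with $\dot\g(t) = v(t,\g(t))$ for $\et$-a.e.\ $\g$ and a.e.\ $t$, and with $\int_\Gamma \int_0^T |\dot\g|^2 \dd t \dd\et < +\infty$. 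In particular $\et \in \cP_2(\Gamma)$.

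Next, exploiting the identification of $\dot\g$ with $v(\g)$ and the equality case of Fenchel--Young, namely
\[
L(x, -D_pH(x,p)) = H^*(x, D_pH(x,p)) = p\cdot D_pH(x,p) - H(x,p),
\]
applied pointwise with $p=Du$, I would compute
\[
\int_\Gamma \int_0^T L(\g,\dot\g)\dd t \dd\et = \int_0^T\!\int_{\T^d} m \big[ Du\cdot D_pH(x, Du) - H(x, Du) \big] \dd x \dd t.
\]
Substituting this expression into the identity of Definition~\ref{def:solution}(4) (and recalling that $\alpha^{ac} = f(\cdot,m)+\beta^{ac}$ while $\alpha^{s}\mres (0,T)\times\T^d = \beta^s$) and rearranging terms yields exactly the energy equality demanded by Definition~\ref{def:optdenflow}, completing the proof.

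The main technical point, which I would check carefully, is the applicability of the superposition principle as stated: one needs the $L^2(m \dd x \dd t)$ integrability of $v$, and one needs the equality $(e_t)_\#\et = m(t,\cdot)$ to hold for \emph{every} $t$ (and not only a.e.), since these endpoint values enter the energy identity at $t=0$ and $t=T$. Both are guaranteed: the $L^2$ bound by \textbf{(HP1)} and the minimum value of $\cB$ being finite, and the pointwise identification of marginals by the narrow continuity of both $t\mapsto (e_t)_\#\et$ and $t\mapsto m(t,\cdot)$.
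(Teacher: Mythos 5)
Your argument is correct, but it is not the route the paper takes. The paper follows \cite{carda} (Lemmas 4.7--4.8 there): it mollifies $(m,w)$ into $(m_\e,w_\e)$ with $0<m_\e\le\ov m$, pushes $m_0$ forward along the classical flow of the smooth field $w_\e/m_\e$ to get a family $(\et_\e)$ of density-constrained flows with $(e_t)_\#\et_\e=m_\e(t,\cdot)$, proves tightness, and identifies the limit $\et$ as optimal by combining lower semicontinuity of the kinetic term with the equality in point (4) of Definition \ref{def:solution} and the upper bound from Lemma \ref{lem:densflow}(ii). You instead invoke the superposition principle stated in Section \ref{sec:Hyp} directly for the velocity $v=-D_pH(\cdot,Du)$, which is legitimate: under \textbf{(HP1)} one has $r=r'=2$, the finiteness of $\cB(m,w)$ (or the bound \eqref{kin_en1}) gives $\int\!\!\int m|v|^2<\infty$, the narrowly continuous representative of $m$ is exactly what enters the marginal identification for every $t$, and the a.e.\ identity $\dot\g=v(t,\g)$ together with Fenchel--Young equality turns point (4) into the energy equality of Definition \ref{def:optdenflow} with no limiting or lower-semicontinuity argument and no need of Lemma \ref{lem:densflow}. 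What your route buys is directness (the velocity identification comes for free from the superposition theorem); what the paper's route buys is self-containedness relative to \cite{carda} and no reliance on the Ambrosio--Gigli--Savar\'e theorem beyond its statement. One small point you should make explicit: in rearranging point (4) you must dispose of the term $\ov m\,\beta^s((0,T)\times\T^d)$, i.e.\ use that under the standing assumptions \textbf{(HP1)}--\textbf{(HP3)} of this section Theorem \ref{regularity_pressure} gives $\beta^s=0$ on $(0,T)\times\T^d$, so that $\beta^{ac}=\beta$ is precisely the function appearing in Definition \ref{def:optdenflow}; without this remark the identity you obtain carries an extra singular term.
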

\begin{proof}
The proof uses the same construction and goes along the same lines as in \cite{carda}. Nevertheless, we discuss the main steps here.

We construct a family $(\et_\e)_{\e>0}$ of density-constrained flows by
$$\int_\Gamma\Psi(\g)\dd\et_\e(\g):=\int_{\T^d}\Psi(X_\e^x)m_0(x)\dd x,$$
for any bounded and continuous map $\Psi:\Gamma\to\R,$ where $X_\e^x$ is the solution of the Cauchy problem
$$
\left\{
\begin{array}{ll}
\ds \dot x(t)=\frac{w_\e(t,x(t))}{m_\e(t,x(t))}, & {\rm{a.e\ in\ }}[0,T],\\[10pt]
x(0)=x,
\end{array}
\right.
$$
$(m_\e,w_\e)$ being a standard mollification of $(m,w)$ such that $0<m_\e\le\ov m.$ One easily checks  that $m_\e(t,\cdot)=(e_t)_\#\et_\e.$

Using Lemma 4.7. from \cite{carda} we obtain that the family  $(\et_\e)_{\e>0}$ is tight. Denoting by $\et$ the limit of a suitable subsequence of such a family, this is an optimal density-constrained flow in the sense of Definition \ref{def:optdenflow}. The proof of this statement goes exactly as for Lemma 4.8. in \cite{carda}, using the equality \eqref{point4} from Definition \ref{def:solution} and the inequality $(ii)$ from Lemma \ref{lem:densflow}.
\end{proof}
\subsection{Optimality conditions on the level of single agent trajectories}
In this subsection our aim is to show that the optimal density-constrained flows are actually concentrated on paths which are optimal (in some weak sense) for the control  problem \eqref{new_control} (see Definition \ref{def:minpath}). We will show that they satisfy a weak dynamic programming principle. 

Let us recall that $\b\in L^2_{\rm{loc}}((0,T);BV(\T^d))$ and $\b_T\in L^1(\T^d).$ In order to handle the evaluation of $\b$ along single agent paths we shall work with specific representative of it (which is defined everywhere in $\T^d$).

For an $L^1_{\rm{loc}}$ function $h:\T^d\to\R$ we define the specific representative of $h$ by 
\begin{equation}\label{representative}
\hat h(x):=\limsup_{\e\downarrow 0}h_\e(x),
\end{equation}
where  
$$h_\e(x):=\int_{\R^d}h(x+\e y)\rho(y)\dd y$$
and $\rho$ being the heat kernel  
\begin{equation}\label{heat_k}
\rho(y):=(2\pi)^{-d/2}e^{-|y|^2/2}. 
\end{equation}
We use this specific regularization via the heat kernel because of the semigroup property $(h_\e)_{\e'}=h_{\e+\e'}$ we shall profit on later. 

To treat passages to limit (in the regularization, as $\e\downarrow 0$, similarly as in Section 6 from  \cite{af2}) we will need some uniform point-wise bounds on $\b_\e$, hence we shall use the properties of the Hardy-Littlewood-type maximal function defined with the help of the heat kernel \eqref{heat_k}. Thus for any $h\in L^1(\T^d)$ we set
$$(Mh)(x):=\sup_{\e>0}\int_{\R^d}|h(x+\e y)|\rho(y)\dd y.$$ 
Let us state some basic properties of the maximal functional $M$ that we will use in our setting. First because of the semigroup property we have 
$$Mh_\e=\sup_{\e'>0}|h_\e|_{\e'}\le\sup_{\tilde\e>0}|h|_{\tilde\e}=Mh.$$
Secondly it is well-known that $M$ leaves invariant any $L^p$ space with $1<p\le+\infty$ and there exists $C_p>0$ such that 
$$\|Mh\|_{L^p(\T^d)}\le C_p\|h\|_{L^p(\T^d)}.$$
Let us recall that by Theorem \ref{regularity_pressure} we have that $M\b\in L_{\rm{loc}}^{d/(d-1)}((0,T)\times\T^d)\hookrightarrow L_{\rm{loc}}^{1}((0,T)\times\T^d).$ The integrability property we need is only $M\beta\in L^1_{\rm{loc}}((0,T)\times\T^d)$, but to guarantee this, $\beta\in L^1_{\rm{loc}}((0,T)\times\T^d)$ is not enough.  

 As usual we set $\a(t,x):=f(x,m(t,x))+\b(t,x)$ and we use its representative $\hat \a$ (obtained as in \eqref{representative}).  

\begin{definition}\label{def:minpath}
Given $0<t_1<t_2<T$, we say that a path $\g\in H^1([0,T];\T^d)$ with $M\hat \a(\cdot,\g)\in L_{\rm{loc}}^1((0,T))$ is \emph{minimizing} on the time interval $[t_1,t_2]$ in the problem \eqref{new_control} if we have
\begin{align}\label{defi minimizing single agent}
\hat u(t_2,\g(t_2))+\int_{t_1}^{t_2}L(\g(t),\dot\g(t))+\hat \a&(t,\g(t))\dd t\le\hat u(t_2,\g(t_2)+\omega(t_2))+\\
&+\int_{t_1}^{t_2}L(\g(t)+\omega(t),\dot\g(t)+\dot\omega(t))+\hat \a(t,\g(t)+\omega(t))\dd t,
\end{align}
for all $\omega\in H^1([t_1,t_2];\T^d)$ such that $\omega(t_1)=0$  and  $M\hat \a(\cdot,\g+\omega)\in L^1([t_1,t_2]).$
\end{definition}

\begin{remark}
Let us notice that for any density-constrained flow $\tilde\et$ the integrability property $M\hat \a(\cdot,\g)\in L_{\rm{loc}}^1((0,T))$ is natural, since it is satisfied $\tilde\et$-a.e., if $M\hat \a\in L_{\rm{loc}}^1((0,T)\times\T^d).$ Indeed, we have
$$\int_\Gamma\int_{t_1}^{t_2}M\hat \a(t,\g(t))\dd t\dd\tilde\et(\g)=\int_{t_1}^{t_2}\int_{\T^d}M\hat \a(t,x)\tilde m(t,x)\dd x\dd t<+\infty,$$
for all $0<t_1<t_2<T,$ where $\tilde m(t,\cdot)\dd x=(e_t)_\#\tilde\et.$
\end{remark}

\begin{theorem}\label{thm:minpath}
For any $0<t_1<t_2<T$, any optimal density-constrained flow $\et$ is concentrated on minimizing paths on the time interval $[t_1,t_2]$ for the problem \eqref{new_control} in the sense of the Definition \ref{def:minpath}.
\end{theorem}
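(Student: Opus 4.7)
The strategy I have in mind is the one introduced by Ambrosio--Figalli \cite{af2} for incompressible Euler and adapted by Cardaliaguet \cite{carda} to MFG. The plan has three steps: first produce a universal one-sided inequality
\[
\hat u(t_1,\sigma(t_1)) \le \hat u(t_2,\sigma(t_2)) + \int_{t_1}^{t_2} \bigl[L(\sigma(t),\dot\sigma(t)) + \hat\alpha(t,\sigma(t))\bigr] \dd t
\]
valid for every $H^1$ curve $\sigma:[t_1,t_2]\to\T^d$ with $M\hat\alpha(\cdot,\sigma)\in L^1([t_1,t_2])$; then use the energy equality \eqref{opt_local} for the optimal flow $\et$ to upgrade this inequality to an equality for $\et$-a.e.\ path $\gamma$; and finally apply the universal inequality to a competitor $\sigma:=\gamma+\omega$ (with $\omega(t_1)=0$) and subtract to read off the minimality property of Definition \ref{def:minpath}.

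For the universal inequality I will mollify $u$ in space-time against the heat kernel, setting $u_\e := u \star \rho_\e$. The distributional inequality $-\partial_t u + H(\cdot,Du)\le\alpha$ is preserved, modulo an error, as $-\partial_t u_\e + H(x,Du_\e) \le \alpha_\e + R_\e$, where $R_\e$ comes from the $x$-dependence and the nonlinearity of $H$ and vanishes in $L^1_{\rm{loc}}$ by the $L^r$ bound on $Du$. Combining with the Fenchel inequality $H(x,Du_\e(t,\sigma(t)))+L(\sigma(t),\dot\sigma(t))+Du_\e(t,\sigma(t))\cdot\dot\sigma(t)\ge 0$ and integrating in $t$ along the Lipschitz curve $\sigma$ yields
\[
u_\e(t_1,\sigma(t_1)) \le u_\e(t_2,\sigma(t_2)) + \int_{t_1}^{t_2} \bigl[L(\sigma,\dot\sigma) + \alpha_\e(t,\sigma) + R_\e(t,\sigma)\bigr] \dd t.
\]
Passing to $\e\to 0$, the left-hand side produces $\hat u(t_1,\sigma(t_1))$ via a liminf, the $u_\e$-term on the right is handled via the $\limsup$ definition of $\hat u$, and the $\alpha_\e$-term by dominated convergence using $|\alpha_\e(t,\sigma(t))|\le M\hat\alpha(t,\sigma(t))\in L^1$.

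For Step 2, integrating the universal inequality against $\et$ and using $(e_t)_\#\et = m(t,\cdot)\dd x$ reproduces the right-hand side of \eqref{opt_local}; optimality of $\et$ forces the nonnegative gap to vanish $\et$-a.e., hence
\[
\hat u(t_1,\gamma(t_1)) = \hat u(t_2,\gamma(t_2)) + \int_{t_1}^{t_2} \bigl[L(\gamma,\dot\gamma) + \hat\alpha(t,\gamma)\bigr] \dd t \qquad \text{for } \et\text{-a.e. } \gamma.
\]
For Step 3, applying the universal inequality to the competitor $\sigma = \gamma + \omega$ (which starts at $\gamma(t_1)$, so that $\hat u(t_1,\sigma(t_1))=\hat u(t_1,\gamma(t_1))$) gives an upper bound for $\hat u(t_1,\gamma(t_1))$, and subtracting from the equality above yields exactly \eqref{defi minimizing single agent}.

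The main technical obstacle lies in Step 1, namely the pointwise evaluation of $\hat u$ (only $BV$) and of $\hat\alpha$ (only $L^{d/(d-1)}_{\rm{loc}}$) along a single $H^1$ curve. The heat-kernel regularization is crucial for two reasons: its semigroup identity $(h_\e)_{\e'}=h_{\e+\e'}$ yields the uniform pointwise bound $\alpha_\e\le M\hat\alpha$, which enables the dominated-convergence argument; and the smoothness of $u_\e$ legitimates the chain-rule computation along $\sigma$. Finally, using \eqref{u+=u} together with $(e_{t_2})_\#\et = m(t_2,\cdot)\dd x$, the possible temporal jumps of $u$ do not pollute the identity at the endpoint $\et$-a.e., so restricting to times $t_1,t_2$ outside the (at most countable) jump set of $t\mapsto u(t,\cdot)$ completes the argument.
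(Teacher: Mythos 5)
Your overall architecture (a pathwise dynamic-programming inequality, upgraded to an equality $\et$-a.e.\ by optimality, then compared with a competitor $\gamma+\omega$) is the right shape, and Steps 2--3 would indeed deliver \eqref{defi minimizing single agent}. But Step 1, on which everything rests, has a genuine gap. You propose to prove the ``universal'' inequality for \emph{every} admissible $H^1$ curve $\sigma$ by mollifying $u$, writing $-\partial_t u_\e+H(x,Du_\e)\le \alpha_\e+R_\e$ with a commutator error $R_\e=H(x,Du_\e)-H(\cdot,Du)\star\rho_\e$, and integrating along $\sigma$. The error $R_\e$ only tends to $0$ in $L^1_{\rm{loc}}$ of \emph{space-time}; a single curve is a null set there, so this gives no information on $\int_{t_1}^{t_2}R_\e(t,\sigma(t))\dd t$. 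Unlike the $\alpha_\e$ term, $R_\e$ carries no maximal-function domination you can invoke: the admissible class of Definition \ref{def:minpath} only assumes $M\hat\a(\cdot,\sigma)\in L^1$, and nothing controls $M\bigl(H(\cdot,Du)\bigr)$ or $M(|Du|^r)$ along $\sigma$ (even an estimate $R_\e\le C\e\,(1+|Du|^r)\star\rho_\e$ does not help, since $\int_{t_1}^{t_2}M(1+|Du|^r)(t,\sigma(t))\dd t$ may be infinite). So the curve-wise inequality for arbitrary competitors cannot be obtained this way, and that is exactly the difficulty the paper is organized around.

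The paper never evaluates the Hamilton--Jacobi inequality along a single curve. The subsolution property is used only through Lemma \ref{lem:densflow}, i.e.\ tested against flows with \emph{bounded densities}, where the analogous error $r_n$ is harmless because it is integrated against $\tilde m\in L^\infty$. To reach single trajectories, the proof (following Ambrosio--Figalli) perturbs the optimal flow on a set $E\subset\Gamma$ by $\omega+\e\chi(t)y$, applies Lemma \ref{lem:densflow} to the pushed-forward (still almost density-constrained) flow $\et_{\e,y}$, subtracts the energy identity \eqref{opt_local}, and then \emph{averages over the random shift $y$ with the heat kernel}. This averaging is what produces the mollified quantities $\hat u_{\e\chi(t_2)}$ and $\hat\a_{\e\chi(t)}$ evaluated along $\gamma+\omega$ --- no mollification of $H$ or of $u$ in the equation is ever needed, so no commutator term appears --- and only then does one send $\e\to0$ using $|\hat\a_{\e\chi}|\le M\hat\a(\cdot,\gamma+\omega)\in L^1$. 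If you want to salvage your route, you must either add to the admissible class an integrability hypothesis along the curve controlling the commutator (which would change Definition \ref{def:minpath}), or replace your Step 1 by this shift-and-average construction. A last, smaller point: the theorem asserts the property for \emph{any} $0<t_1<t_2<T$, so you cannot discard the jump times of $t\mapsto u(t,\cdot)$; the paper instead uses $u(t_2^-,\cdot)\le u(t_2,\cdot)$ (from $\partial_t^s u\ge 0$) together with \eqref{u+=u} at $t_2$, and avoids any endpoint evaluation at $t_1$ because the perturbation vanishes there ($\chi(t_1)=0$, $\omega(t_1)=0$).
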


\begin{proof} We follow here Ambrosio-Figalli \cite{af2}. Let us take an optimal density-constrained flow $\et$ given by Proposition \ref{prop:ex_flow}, fix $0<t_1<t_2<T$ and $y\in\T^d$, take $\omega\in H^1([t_1,t_2];\T^d)$ with $\omega(t_1)=0$ and $\chi\in C_c^1((0,T);[0,1])$ with $\chi>0$ on $(t_1,t_2]$ and $\chi(t_1)=0$ a smooth cut-off function. Let us take a Borel subset $E\subset\Gamma$ such that $\et(E)$ is positive.  For $\e>0$ and $y\in\T^d$ we introduce the map $T_{\e,y}:\Gamma\to\Gamma$ by
$$T_{\e,y}(\g):=\left\{
\begin{array}{ll}
\g, & {\rm{if}}\;\g\notin E,\\
\g+\omega+\e\chi y, & {\rm{if}}\;\g\in E.
\end{array}
\right.
$$ 
Now let us define $\et_{\e,y}:=(T_{\e,y})_\#\et,$ which in particular is an admissible density-constrained flow satisfying the inequalities from Lemma \ref{lem:densflow}. In addition let us remark that $(e_{t_1})_\#\et_{\e,y}=(e_{t_1})_\#\et= m(t_1,\cdot )\dd x$.

Using the inequality $(i)$ from Lemma \ref{lem:densflow} for $\et_{\e,y}$ (on the time interval $[t_1,t_2]$) and the equality \eqref{opt_local} for $\et$ (on the same interval $[t_1,t_2]$) we obtain
\begin{align*}
\int_{E} \Big[\hat u&(t_2^+,\g(t_2))+ \int_{t_1}^{t_2}L(\g(t),\dot\g(t))+\hat \a(t,\g(t)) \dd t\Big]\dd\et(\g)\le\\
&\int_{E}\Big[\hat u(t_2^-,\g(t_2)+\omega(t_2) +\e\chi(t_2)y)+\int_{t_1}^{t_2}L(\g(t)+\omega(t)+\e\chi(t)y,\dot\g(t)+\dot\omega(t)+\e\dot\chi(t)y)\dd t\Big]\dd\et(\g)\\
&+\int_{E}\int_{t_1}^{t_2}\hat \a(t,\g(t)+\omega(t)+\e\chi(t)y)\dd t\dd\et(\g).
\end{align*}
where we are allowed to use any representative of $u$ and $\a,$ thus we use the specially constructed ones $\hat u$ and $\hat \a.$ 
Let us average this last inequality w.r.t. the variable $y$ using the kernel $\rho$ introduced in \eqref{heat_k}. We obtain
\begin{multline*}
\int_{E}  \Big[\hat u (t_2^+,\g(t_2))+\int_{t_1}^{t_2}L(\g(t),\dot\g(t))+\hat \a(t,\g(t)) \dd t\Big]\dd\et(\g)\le\\
\int_{E}\int_{\R^d} \Big[\hat u(t_2^-,\g(t_2)+\omega(t_2)+\e\chi(t_2)y)+\int_{t_1}^{t_2}L(\g(t)+\omega(t)+\e\chi(t)y,\dot\g(t)+\dot\omega(t)+\e\dot\chi(t)y)\dd t\Big]\rho(y) \dd y \dd\et(\g)\\
+\int_{E}\int_{t_1}^{t_2}\hat \a_{\e\chi(t)}(t,\g(t)+\omega(t))\dd t\dd\et(\g).
\end{multline*}
Now choosing $\mathcal{D}\subset H^1([t_1,t_2];\T^d)$ a dense subset with $\omega(t_1)=0$ for all $\omega\in\mathcal{D}$, by the arbitrariness of $E$ for $\et$-almost every curve $\g\in\Gamma$ we deduce that
\begin{multline*}
 \hat u(t_2^+,\g(t_2))+\int_{t_1}^{t_2}L(\g(t),\dot\g(t))+\hat \a(t,\g(t)) \dd t\\
 \le \int_{\R^d} \int_{t_1}^{t_2}L(\g(t)+\omega(t)+\e\chi(t)y,\dot\g(t)+\dot\omega(t)+\e\dot\chi(t)y)\rho(y)\dd t\dd y \\
+ \hat u_{\e\chi(t_2)}(t_2^-,\g(t_2)+\omega(t_2))  +\int_{t_1}^{t_2}\hat \a_{\e\chi(t)}(t,\g(t)+\omega(t))\dd t,
\end{multline*}
for all $\omega\in\mathcal{D}$ and $\e=1/n.$ By a density argument the above inequality holds for any $\omega\in H^1([t_1,t_2];\T^d)$ with $\omega(t_1)=0.$ We finally let  $\e \downarrow 0$. As $M\hat\a(t,\g+\omega)\in L^1([t_1,t_2])$ and using the domination $|\a_\e|\le M\hat\a$, we can pass to the limit in the last term of the above inequality. By the dominate convergence theorem we can also pass to the limit in the first term thanks to the growth property and the continuity of $L$. In this way we obtain the inequality \eqref{defi minimizing single agent} with $\hat u(t_2^+,\g(t_2))$ instead of $\hat u(t_2,\g(t_2))$ and $\hat u(t_2^-,\g(t_2)+\omega(t_2))$. To conclude, it is sufficient to use $u(t_2^-,\cdot)\le u(t_2,\cdot) $ (a consequence of $\partial_t^s u\ge 0$) and $u(t_2^+,\cdot)= u(t_2,\cdot) $ $m_{t_2}$-a.e. (given by \eqref{u+=u}).
\end{proof}

\begin{remark}
The global version of Theorem \ref{thm:minpath} (to arrive up to the initial time $0$ and the final time $T$) remains an open question. This is mainly due to the local integrability property for the additional price $\b\in L_{\rm{loc}}^2((0,T);BV(\T^d))$ we are aware of for the moment. Let us remark that an integrability property $\b\in L^{1}([0,T];L^{1+\e}(\T^d))$ for some $\e>0$ would be enough to conclude in the global version.
\end{remark}

\vspace{0.5cm}
The {\it notion of Nash equilibria} has now a clearer formulation. Since we are able to give a weak meaning for the optimization problem along single agent trajectories, a solution $(u,m,\b,\b_T)$ of the MFG system with density constraints gives the following notion of equilibrium. 

\begin{definition}[Local weak Nash equilibria]\label{def:nash}
Let $(u,m,\b,\b_T)$ be a solution of the MFG system with density constraints in the sense of Definition \ref{def:solution} on $[0,T]\times\T^d$. We say that $(m,\b,\b_T)$ is a ${\rm{local\ weak\ Nash\ equilibrium}}$,  if there exists an optimal density-constrained flow $\et\in\cP_2(\Gamma)$ in the sense of Definition \ref{def:optdenflow} (constructed with the help of $(m,\b,\b_T)$) which is concentrated on locally minimizing paths for Problem \eqref{new_control} in the sense of  Definition \ref{def:minpath}. In particular one has that $m_t=(e_t)_\#\et$ and $0\le m_t\le \ov m$ a.e. in $\T^d$ for all $t\in[0,T]$.   
\end{definition}
\begin{remark}
Let us remark that by Proposition \ref{prop:ex_flow} and Theorem \ref{thm:minpath} for any solution $(u,m,\b,\b_T)$ for the MFG system with density constraints obtained with the additional assumptions {\bf{(HP1)}}, {\bf{(HP2)}} and {\bf{(HP3)}} the triple $(m,\b,\b_T)$ is always a local weak Nash equilibrium in the sense of the above definition.
\end{remark}

\subsection{The case without density constraint}
Let us have a few words on the Nash equilibrium and on the optimality condition on the level of single agent trajectories in the case when we do not impose density constraints. More precisely, our aim is to clarify Remark 4.9. from \cite{carda}.    

Let us recall that in Section 4.3. from \cite{carda} it was considered a class of flows $\tilde\et\in\cP_{r'}(\T^d)$ such that $\tilde m\in L^q([0,T]\times\T^d)$ where $\tilde m_t:=(e_t)_\#\tilde\et,$ where $r'>1$ is the growth of the Lagrangian $L$ in the velocity variable, while $q-1$ (where $q>1$) is the growth of the continuous coupling $f$ in the second variable. 
Because of this growth condition and since $m\in L^q([0,T]\times\T^d)$ we have first that $\a(t,x):=f(x,m(t,x))\in L^{q'}([0,T]\times\T^d).$
Moreover Lemma \ref{lem:densflow} and Proposition \ref{prop:ex_flow} hold with $\b\equiv0$ and $\b_T\equiv 0,$ since we did not impose any density constraint (see the corresponding Lemma 4.6-4.8 from \cite{carda}). 

The difference, compared to our analysis in the previous section, is that we can consider globally minimizing paths in Definition \ref{def:minpath}. More precisely, by the global integrability property of $\hat \a,$ and hence $M\hat \a\in L^{q'}([0,T]\times\T^d)$ we allow curves $\g\in W^{1,r'}([0,T])$ (and their variations) such that $M\hat \a(\cdot,\g)\in L^{q'}([0,T]).$ This is once again a natural class, since for any flow $\tilde\et$, with the above described properties, satisfies that 
$$\int_\Gamma\int_{t_1}^{t_2}M\hat \a(t,\g(t))\dd t\dd\tilde\et(\g)=\int_{t_1}^{t_2}\int_{\T^d}M\hat \a(t,x)\tilde m(t,x)\dd x\dd t<+\infty,$$
for all $0\le t_1<t_2\le T,$ since $M\hat \a\in L^{q'}([0,T]\times\T^d)$ and $\tilde m\in L^{q}([0,T]\times\T^d)$ where $\tilde m_t=(e_t)_\#\tilde\et.$ 

By these observations  in the statement of Theorem \ref{thm:minpath} one can change now the word ``locally'' to ``globally'' and the proof goes along the same lines.
\bigskip

%

%
%

%
%
%
%

\end{document}